\documentclass[12pt]{amsart}
\usepackage{amssymb}
\usepackage{amsmath}
\usepackage{csquotes}
\usepackage[pagebackref=false,colorlinks=true,urlcolor  = blue,linkcolor=blue,citecolor=blue]{hyperref}
\usepackage{mathrsfs,graphicx,amsthm, mathtools}
\usepackage{multirow}
\usepackage{makecell}
\usepackage[all]{xy}
\usepackage{tikz}
\usepackage[noadjust]{cite}

\usepackage{xcolor}
\input xy
\xyoption{all}
\textheight 22cm 
\textwidth 16cm \hoffset -1.5cm

\definecolor{gree}{RGB}{255,91,17}
\definecolor{Gr}{RGB}{31,178,38}

\newtheorem{lemma}{Lemma}[section]
\newtheorem{corollary}[lemma]{Corollary}
\newtheorem{theorem}[lemma]{Theorem}
\newtheorem{proposition}[lemma]{Proposition}

\theoremstyle{definition}
\newtheorem{remark}[lemma]{Remark}
\newtheorem{definition}[lemma]{Definition}

\newtheorem{example}[lemma]{Example}

\newtheorem{conjecture}[lemma]{Conjecture}
\newtheorem{question}[lemma]{Question}
\DeclareMathOperator{\modd}{mod}
\DeclareMathOperator{\End}{End}
\DeclareMathOperator{\add}{add}
\DeclareMathOperator{\Hom}{Hom}
\DeclareMathOperator{\Ext}{Ext}
\DeclareMathOperator{\ind}{ind}
\DeclareMathOperator{\mcm}{\mathcal{M}}

\newcommand{\gd}{\mathrm{gl.dim}\,}

\newcommand{\md}{\mathcal{D}}

\newcommand{\uu}{\mathscr{U}}

\newcommand{\mcc}{\mathscr{C}}

\newtheorem{mainpro}{Proposition}

\newtheorem{mainthmA}{Theorem}

\newtheorem{mainthmB}{Theorem}

\newtheorem{mainthmD}{Theorem}

\newtheorem{mainthmE}{Theorem}

\newtheorem{mainthmF}{Theorem}

\newtheorem{mainthmG}{Theorem}

\newtheorem{mainthmH}{Theorem}


\definecolor{blac}{RGB}{0,0,1}
\begin{document}
\title[Coxeter matrices and homological quadratic forms]{Coxeter matrices and homological quadratic forms of $\boldsymbol{n}$-hereditary algebras}
\author{Raziyeh Diyanatnezhad}
\address{Department of Pure Mathematics\\
Faculty of Mathematics and Statistics\\
University of Isfahan\\
P.O. Box: 81746-73441, Isfahan, Iran and School of Mathematics, Institute for Research in Fundamental Sciences (IPM), P.O. Box: 19395-5746, Tehran, Iran}
\email{r.diyanat@sci.ui.ac.ir}

\author{Alireza Nasr-Isfahani}
\address{Department of Pure Mathematics\\
Faculty of Mathematics and Statistics\\
University of Isfahan\\
P.O. Box: 81746-73441, Isfahan, Iran and School of Mathematics, Institute for Research in Fundamental Sciences (IPM), P.O. Box: 19395-5746, Tehran, Iran}
\email{nasr$_{-}$a@sci.ui.ac.ir / nasr@ipm.ir}

\subjclass[2020]{{16G10}, {16E20}, {16E30}, {16E35}}

\keywords{$n$-hereditary algebra, $n$-representation infinite algebra, dimension vector, Coxeter matrix, quadratic form}

\begin{abstract}
We study the Coxeter matrices and the homological quadratic forms of $n$-hereditary algebras within the framework of higher dimensional Auslander--Reiten theory. Let $\Lambda$ be a finite dimensional $n$-hereditary algebra with the Coxeter matrix $\Phi$ and the homological quadratic form $\chi$. We prove that if $\Lambda$ is $n$-representation finite, then there exists a positive integer $d$ such that $\Phi^d=1$. In the case $n$ is an odd number, we show that if there exists a positive integer $d$ such that $\Phi^d=1$, then $\Lambda$ is $n$-representation finite. Let $\mathcal{C}^0$ be the subcategory of $\modd\Lambda$ which is a higher analogue of the module category in the context of higher dimensional Auslander--Reiten theory. We introduce a Grothendieck group $\mathrm{K}_0(\mathcal{C}^0)$ associated with $\mathcal{C}^0$ and show that it is isomorphic to the Grothendieck group of $\Lambda$. We further prove that if the restriction of $\chi$ to $\mathrm{K}_0(\mcc^0)$ is positive definite, then $\Lambda$ is $n$-representation finite for odd $n$. To prove these results, we first show that indecomposable $n$-preprojective and $n$-preinjective modules are uniquely determined up to isomorphism by their dimension vectors for odd $n$. We also provide examples of $n$-representation finite algebras that the restriction of $\chi$ to $\mathrm{K}_0(\mcc^0)$ is not positive definite. 
\end{abstract}
\maketitle
\tableofcontents
\section{Introduction}
Higher Auslander--Reiten theory was first introduced by Iyama in \cite{I1} and was further developed in a series of subsequent works \cite{I2,I4,I3}. Beyond its deep connections to the representation theory \cite{HI2,IO,M}, it has found significant connections to diverse areas of mathematics, including combinatorics \cite{OT,HJ,W}, algebraic geometry \cite{HI1, HIMO, JKM, IW,IW1,IW2}, the categorification of cluster algebras \cite{GLS}, as well as higher category
theory and algebraic $K$-theory \cite{DJY}. Given a positive integer $n$, Iyama introduced the notion of $n$-cluster tilting subcategories of abelian categories to establish a higher-dimensional analogue of the classical Auslander correspondence. Let $\Lambda$ be a finite dimensional algebra of the global dimension at most $n$ and $\modd\Lambda$ be the category of finitely generated right $\Lambda$-modules. If $\modd\Lambda$ admits an $n$-cluster tilting subcategory with an additive generator, then $\Lambda$ is called an \textit{$n$-representation finite} algebra. These algebras were introduced by Iyama and oppermann in \cite{IO} and have been extensively studied by several authors \cite{I4,HI1,HI2,IO1,HZ1,HZ2,HZ3}. Notably, the case $n=1$ recovers precisely the class of representation finite hereditary algebras. In the classical theory, representation infinite algebras emerge as a natural counterpart to representation finite ones. Inspired by this fact, Herschend, Iyama and Oppermann introduced the concept of $n$-representation infinite algebras in \cite{HIO}. Let $\nu$ be the Nakayama functor on the bounded derived category $\md_\Lambda$ of $\modd \Lambda$. Then, the autoequivalences $\nu_n\coloneqq \nu\circ[-n]$ and $\nu^{-1}_n\coloneqq \nu^{-1}\circ[n]$ are well-defined on $\md_\Lambda$ (see \cite{I4}).  A finite dimensional algebra $\Lambda$ with the global dimension at most $n$ is said to be \textit{$n$-representation infinite} if $\nu_n^i(\Lambda)$ is concentrated in degree $0$ for all $i \geq 0$. Note that $1$-representation infinite algebras are exactly representation infinite hereditary algebras. In \cite{HIO}, the authors also introduced another class of algebras with the global dimension at most $n$ which called \textit{$n$-hereditary} algebras (see Definition \ref{n-h} in Preliminaries). $1$-hereditary algebras are precisely hereditary algebras. Herschend et al. in \cite{HIO} showed that every $n$-hereditary algebra is either $n$-representation finite or $n$-representation infinite. 

The representation theory of hereditary algebras is one of the most important part in the representation theory of finite dimensional algebras \cite{G,BGP,AP,Ri,Ri1,Ri2,Ke}. There are several tools to determine the representation type of hereditary algebras, for example the shape of Gabriel quivers, homological quadratic forms and Coxeter matrices (see \cite{G,AP}). A key result states that a hereditary algebra $\Lambda$ is representation finite if and only if its Coxeter matrix $\Phi$ satisfies $\Phi^d = 1$, for some positive integer $d$. We recall that for a basic finite dimensional algebra $\Lambda$ of the finite global dimension, the Coxeter matrix of $\Lambda$ is defined as $\Phi\coloneqq -C_\Lambda^t C_\Lambda^{-1}$, where $C_\Lambda$ denotes the Cartan matrix of $\Lambda$. Another well-known result is the fact that a hereditary algebra $\Lambda$ is representation finite if and only if its associated homological quadratic form is positive definite. In this case, the associated homological quadratic form has finitely many positive roots and there is a bijection between the set of isomorphism classes of indecomposable $\Lambda$-modules and the set of positive roots of the quadratic form. 

Let $K$ be a field and $\Lambda$ be a finite dimensional $K$-algebra. $M\in\modd\Lambda$ is called a \textit{brick} if $\End_\Lambda(M)$ is a division $K$-algebra. If there are only finitely many isomorphism classes of
bricks in $\modd \Lambda$, then $\Lambda$ is called  \textit{brick--finite}. Obviously any representation finite algebra is brick--finite, but the converse is not true in general (see \cite{Mi}). It is known that a finite dimensional hereditary $K$-algebra $\Lambda$ over an algebraically closed field $K$ is brick--finite if and only it is representation finite (see \cite[Section $\rm{VII}.6$]{SY}). A finite dimensional $K$-algebra $\Lambda$ is brick--finite if and only if it is $\tau$-tilting finite (for more details see \cite{DIJ}). Nowadays the study of brick--finite algebras is one of the active research area in the representation theory of algebras.  

The aim of this article is to investigate the higher dimensional variant of these benchmarks in order to ascertain the $n$-representation finiteness of $n$-hereditary algebras.

For an $n$-hereditary algebra $\Lambda$, the subcategory $\mcc^0$ of $\modd\Lambda$ plays an important role (for the definition of $\mcc^0$, see Preliminaries). This subcategory can be viewed as a higher analogue of the module category. We first prove $n$-Auslander--Reiten duality for $\mcc^0$ which has an essential role in this paper. Note that Iyama in \cite[Theorem 2.3.1]{I1} proved $n$-Auslander--Reiten duality for $n$-cluster tilting subcategories of $\modd\Lambda$.

\begin{mainthmA}$($Theorem \ref{ARd1}$)$\label{thm:MainA}
Let $\Lambda$ be an $n$-hereditary algebra. Then, there exist the following
 functorial isomorphisms for any $X, Y\in\mcc^0$ and $Z\in\modd\Lambda$.
$$
\underline{\Hom}_\Lambda(X,Z)\cong D\Ext^n_\Lambda(Z,\tau_nX)\quad\,\,\text{and}\quad\,\,
\overline{\Hom}_\Lambda(Z,Y)\cong D\Ext^n_\Lambda(\tau_n^-Y,Z).$$
\end{mainthmA}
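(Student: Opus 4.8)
The plan is to run the standard derived-category proof of Auslander--Reiten duality through the Nakayama (Serre) functor, isolating precisely where membership in $\mcc^0$ is used so that the argument survives with $Z$ an \emph{arbitrary} module rather than an object of $\mcc^0$. Since $\gd\Lambda\le n<\infty$, the category $\md_\Lambda$ is Hom-finite Krull--Schmidt with Serre functor $\nu=D\Lambda\otimes^{\mathrm{L}}_\Lambda-$, so there is a functorial isomorphism $\Hom_{\md_\Lambda}(A,B)\cong D\Hom_{\md_\Lambda}(B,\nu A)$ for all $A,B\in\md_\Lambda$; taking $A=X$ and $B=Z$ gives the functorial identification $\Hom_\Lambda(X,Z)\cong D\Hom_{\md_\Lambda}(Z,\nu X)$, which is the engine of the proof. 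The single input that uses $X\in\mcc^0$ is the cohomology of $\nu X$. Writing $\nu X\cong D\,\mathrm{R}\Hom_\Lambda(X,\Lambda)$ one has $H^{-i}(\nu X)\cong D\Ext^i_\Lambda(X,\Lambda)$, and by the defining property of $\mcc^0$ these vanish for $0<i<n$; hence $\nu X$ is concentrated in degrees $0$ and $-n$, with $H^0(\nu X)\cong D\Hom_\Lambda(X,\Lambda)$ and $H^{-n}(\nu X)\cong D\Ext^n_\Lambda(X,\Lambda)=\tau_nX$. I would record this as the canonical truncation triangle
\[
\tau_nX[n]\longrightarrow \nu X\longrightarrow D\Hom_\Lambda(X,\Lambda)\longrightarrow \tau_nX[n+1].
\]

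Next I would apply $\Hom_{\md_\Lambda}(Z,-)$ to this triangle. The vanishing of negative $\Ext$ groups on the left, together with $\Ext^{n+1}_\Lambda(Z,\tau_nX)=0$ (because $\gd\Lambda\le n$) on the right, collapses the long exact sequence to a short exact sequence
\[
0\to \Ext^n_\Lambda(Z,\tau_nX)\to \Hom_{\md_\Lambda}(Z,\nu X)\to \Hom_\Lambda\!\bigl(Z,D\Hom_\Lambda(X,\Lambda)\bigr)\to 0.
\]
Dualizing, substituting the Serre isomorphism $D\Hom_{\md_\Lambda}(Z,\nu X)\cong\Hom_\Lambda(X,Z)$, and using the adjunction $D\Hom_\Lambda\bigl(Z,D\Hom_\Lambda(X,\Lambda)\bigr)\cong Z\otimes_\Lambda\Hom_\Lambda(X,\Lambda)$, this becomes
\[
0\to Z\otimes_\Lambda\Hom_\Lambda(X,\Lambda)\to \Hom_\Lambda(X,Z)\to D\Ext^n_\Lambda(Z,\tau_nX)\to 0,
\]
which already exhibits $D\Ext^n_\Lambda(Z,\tau_nX)$ as a functorial quotient of $\Hom_\Lambda(X,Z)$.

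The crux is to identify the left-hand term with $\mathcal{P}(X,Z)$, the space of morphisms factoring through a projective: one must check that the inclusion above is the evaluation map $z\otimes f\mapsto\bigl(x\mapsto zf(x)\bigr)$, whose image is by definition $\mathcal{P}(X,Z)$, and that it is injective. For injectivity I would exploit that $X$ is perfect, so $\mathrm{R}\Hom_\Lambda(X,Z)\cong Z\otimes^{\mathrm{L}}_\Lambda\mathrm{R}\Hom_\Lambda(X,\Lambda)$; feeding the same two-term truncation of $\mathrm{R}\Hom_\Lambda(X,\Lambda)$ through $Z\otimes^{\mathrm{L}}_\Lambda-$ and reading off $H^0$ yields exactly the injectivity of the evaluation map, with cokernel $\mathrm{Tor}^\Lambda_n\bigl(Z,\Ext^n_\Lambda(X,\Lambda)\bigr)$, and the standard duality $\mathrm{Tor}^\Lambda_n(Z,W)\cong D\Ext^n_\Lambda(Z,DW)$ reconciles this cokernel with $D\Ext^n_\Lambda(Z,\tau_nX)$. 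Passing to the quotient then gives $\underline{\Hom}_\Lambda(X,Z)\cong D\Ext^n_\Lambda(Z,\tau_nX)$, functorially in $X$ and $Z$; the second isomorphism follows by applying this to $\Lambda^{\op}$ through the duality $D\colon\modd\Lambda\to\modd\Lambda^{\op}$, which interchanges $\underline{\Hom}$ with $\overline{\Hom}$ and $\tau_n$ with $\tau_n^{-}$ and carries $\mcc^0$ to the corresponding subcategory over $\Lambda^{\op}$. I expect the main obstacle to be precisely this bookkeeping: verifying that the connecting maps produced by Serre duality coincide with the canonical evaluation/composition maps (so that the kernel is genuinely $\mathcal{P}(X,Z)$ rather than merely an abstractly isomorphic space), and that every isomorphism in the chain is natural in all three variables.
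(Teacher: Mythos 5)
Your proposal is correct, but it takes a genuinely different route from the paper. The paper's proof is a two-line reduction: it quotes Iyama's general duality (\cite[Theorem 1.5]{I1}, reproduced as Theorem \ref{gARd}), which gives both formulas for any $X$ with $\Ext^i_\Lambda(X,\Lambda)=0$ and any $Y$ with $\Ext^i_\Lambda(D\Lambda,Y)=0$ for $0<i<n$, and then uses Proposition \ref{pic} (with $\Lambda\in\mathscr{P}$ and $D\Lambda\in\mathscr{I}$) to check that every object of $\mcc^0$ satisfies both vanishing conditions. You instead re-derive the first formula from scratch inside $\md_\Lambda$: Serre duality, the two-term truncation triangle of $\nu X$ (whose existence rests on exactly the same vanishing $\Ext^i_\Lambda(X,\Lambda)=0$ for $0<i<n$), the resulting short exact sequence, and the identification of the kernel with the morphisms factoring through a projective via $\boldsymbol{\mathrm{R}}\Hom_\Lambda(X,Z)\cong Z\otimes^{\mathrm{L}}_\Lambda\boldsymbol{\mathrm{R}}\Hom_\Lambda(X,\Lambda)$; the second formula then follows by passing to $\Lambda^{\rm{op}}$. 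So both proofs consume the same hypothesis on $X$ and $Y$ --- membership in Iyama's $\mathscr{X}$ and $\mathscr{Y}$ --- but the paper outsources the duality itself to \cite{I1}, while you reprove it (in the case $\gd\Lambda\leq n$, which is all that is needed here) by a derived-category argument. The paper's route buys brevity; yours buys a self-contained proof that makes visible exactly where the rigidity of $\mcc^0$ and the bound on the global dimension enter. The one point you rightly flag as delicate --- that the map $Z\otimes_\Lambda\Hom_\Lambda(X,\Lambda)\to\Hom_\Lambda(X,Z)$ arising from the triangle is the evaluation map, so that the quotient is genuinely $\underline{\Hom}_\Lambda(X,Z)$ and not merely abstractly isomorphic to it --- is precisely the technical content hidden in the cited theorem; your sketch of how to verify it is the right one, but it would need to be written out in full for a complete argument.
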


By using $n$-Auslander--Reiten duality for $\mcc^0$, we give a characterization of $n$-representation finite algebras by bricks in $\mcc^0$.

\begin{mainthmB}$($Theorem \ref{bricks}$)$\label{thm:MainB}
Let $K$ be an algebraically closed field and $\Lambda$ be an $n$-hereditary $K$-algebra with the acyclic Gabriel quiver.
 Then $\Lambda$ is $n$-representation finite if and only if there are finitely many isomorphism classes of bricks in $\mcc^0$.
\end{mainthmB}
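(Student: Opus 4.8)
The plan is to prove the two implications separately, with essentially all the content concentrated in the converse. For the ``only if'' direction I would argue directly: if $\Lambda$ is $n$-representation finite, then by definition $\mcc^0$ admits an additive generator, so $\mcc^0=\add M$ for a basic $n$-cluster tilting module $M$ and therefore contains only finitely many indecomposables up to isomorphism. Since every brick is indecomposable (its endomorphism ring is a division ring, hence local) and $\mcc^0$ is closed under direct summands, any brick lying in $\mcc^0$ must be one of the finitely many indecomposable summands of $M$. Thus there are only finitely many isomorphism classes of bricks in $\mcc^0$. This step is routine and uses neither acyclicity nor Theorem~\ref{thm:MainA}.

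For the converse I would prove the contrapositive: assuming $\Lambda$ is \emph{not} $n$-representation finite, I exhibit infinitely many bricks in $\mcc^0$. Since $\Lambda$ is $n$-hereditary, the Herschend--Iyama--Oppermann dichotomy shows it must then be $n$-representation infinite, so $\nu_n^i\Lambda$ is concentrated in degree $0$ for all $i\ge 0$. Writing $\Lambda=\bigoplus_j P_j$ as a sum of indecomposable projectives, the indecomposable summands $\nu_n^i P_j$ of the modules $\nu_n^i\Lambda$ are precisely the indecomposable $n$-preprojective modules, and all of them lie in $\mcc^0$. For an $n$-representation infinite algebra this family contains infinitely many pairwise non-isomorphic modules; equivalently, no $P_j$ is $\nu_n$-periodic, since a relation $\nu_n^{i}P_j\cong\nu_n^{i'}P_j$ with $i<i'$ would force $P_j\cong\nu_n^{\,i'-i}P_j$, making the $\nu_n$-orbit of $\Lambda$ finite and hence $\Lambda$ $n$-representation finite, a contradiction.

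The key point is that each such $X=\nu_n^i P_j$ is a brick, and here I would use that $\Lambda$ is basic with acyclic Gabriel quiver together with the fact that $\nu_n$ is an autoequivalence of $\md_\Lambda$. Since $X$ is concentrated in degree $0$ we have $\End_\Lambda(X)=\Hom_{\md_\Lambda}(X,X)$, and applying the autoequivalence $\nu_n^{-i}$ to both arguments gives
$$\End_\Lambda(X)=\Hom_{\md_\Lambda}(\nu_n^i P_j,\nu_n^i P_j)\cong \Hom_{\md_\Lambda}(P_j,P_j)=\End_\Lambda(P_j)\cong e_j\Lambda e_j.$$
Because the Gabriel quiver of $\Lambda$ is acyclic, the only path from the vertex $j$ to itself is the trivial one, so $e_j\Lambda e_j\cong K$; hence $\End_\Lambda(X)\cong K$ is a division $K$-algebra and $X$ is a brick. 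This produces infinitely many isomorphism classes of bricks in $\mcc^0$, which establishes the contrapositive and completes the proof.

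The main obstacle I anticipate is not the brick computation, which is clean via the autoequivalence $\nu_n$ once acyclicity is invoked, but rather the rigorous justification that $n$-representation infiniteness forces infinitely many pairwise non-isomorphic indecomposable $n$-preprojective modules (that is, ruling out $\nu_n$-periodicity and pinning down $\mcc^0$ as containing all of them). This is where I would rely on the structural theory of $n$-hereditary algebras, and where Theorem~\ref{thm:MainA} provides the natural framework: the identification $\underline{\Hom}_\Lambda(X,Z)\cong D\Ext^n_\Lambda(Z,\tau_n X)$ governs the $\tau_n$-orbits out of which the $n$-preprojective modules are built and gives an intrinsic, derived-category-free handle on the relevant $\Hom$ and $\Ext$ spaces in $\mcc^0$.
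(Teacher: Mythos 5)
Your proposal follows essentially the same route as the paper: the forward direction is the trivial finiteness of $\ind\mcc^0$, and the converse is proved contrapositively by using the dichotomy to pass to the $n$-representation infinite case, producing the infinite family of indecomposable $n$-preprojectives $\nu_n^{-i}P$ and showing each is a brick because $\nu_n^{-i}$ is an autoequivalence preserving endomorphism rings and acyclicity gives $e_j\Lambda e_j\cong K$ (the paper packages this as Theorem \ref{n-ext}(a)(ii)). The one step where your justification does not hold up as written is the non-periodicity claim: from $P_j\cong\nu_n^{i'-i}P_j$ you infer that ``the $\nu_n$-orbit of $\Lambda$ is finite and hence $\Lambda$ is $n$-representation finite,'' but periodicity of a single $P_j$ does not make the orbit of all of $\Lambda$ finite, and even a finite orbit of $\Lambda$ does not yield $n$-representation finiteness without further argument. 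The clean fix is not Theorem \ref{thm:MainA} (which you name as the intended tool) but Proposition \ref{thio0}: since $\Hom_{\md_\Lambda}(\nu_n^i(\Lambda),\nu_n^j(\Lambda))=0$ for $i<j$, an isomorphism $P_j\cong\nu_n^{k}P_j$ with $k\neq 0$ would force $\End_\Lambda(P_j)=0$, which is absurd; equivalently, one simply cites the bijection $\ind(\mathrm{proj}\,\Lambda)\times\mathbb{Z}_{\geq 0}\to\ind\mathscr{P}$ of Proposition \ref{Thio}(a), which is exactly what the paper does.
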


Let $\Lambda$ be an $n$-hereditary algebra. The Coxeter matrix of $\Lambda$ is given by $\Phi\coloneqq (-1)^nC_\Lambda^t C_\Lambda^{-1}$ (for $n$-representation finite and $n$-representation infinite cases, see \cite[Definition 2.1]{M} and \cite[Observation 4.12]{HIO}, respectively).
As a key objective of this paper, we use the Coxeter matrix to establish a criterion determining when an $n$-hereditary algebra is $n$-representation finite. For an $n$-representation finite algebra $\Lambda$, we find a positive integer $d$ such that $\Phi^d=1$. 
More precisely, we prove the following proposition.
\begin{mainpro}$($Proposition \ref{fi-re}$)$\label{mainproA}
	Let $\Lambda$ be an $n$-hereditary algebra. If $\Lambda$ is $n$-representation finite, then there exists a positive integer $d$ such that $\Phi^d=1$.                                     
\end{mainpro}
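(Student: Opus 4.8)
The plan is to realize $\Phi$ as the matrix of the autoequivalence $\nu_n$ on the Grothendieck group and then to exploit the finiteness built into the $n$-representation finite hypothesis to force a finite orbit. First I would record the $K_0$-level computation. Since $\mathrm{K}_0(\md_\Lambda)\cong\mathrm{K}_0(\Lambda)\cong\mathbb{Z}^m$ is free on the classes of the simple modules and the Nakayama functor $\nu$ sends $[P_j]$ to $[I_j]$, in terms of dimension vectors $\nu$ acts by $C_\Lambda^t C_\Lambda^{-1}$, because the columns of $C_\Lambda$ (resp.\ $C_\Lambda^t$) are the dimension vectors of the indecomposable projectives (resp.\ injectives). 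As the shift $[-n]$ acts on $\mathrm{K}_0$ by $(-1)^n$, the automorphism induced by $\nu_n=\nu\circ[-n]$ is exactly $\Phi=(-1)^n C_\Lambda^t C_\Lambda^{-1}$, so that $[\nu_n X]=\Phi\,[X]$ in $\mathrm{K}_0$ for every $X\in\md_\Lambda$. (Since $C_\Lambda\in\mathrm{GL}_m(\mathbb{Z})$ for an algebra of finite global dimension, the classes $[P_1],\dots,[P_m]$ form a $\mathbb{Z}$-basis of $\mathrm{K}_0$; this will be used below.)

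Next I would invoke the structure theory of $n$-representation finite algebras. Write $M=\bigoplus_{k=1}^r M_k$ for the basic $n$-cluster tilting module, so that $\mcm=\add M$ has only finitely many indecomposables and contains all indecomposable projectives and injectives. The key structural input is that $\nu_n$ preserves the additive closure $\mathscr{U}:=\add\{\nu_n^i(\Lambda):i\in\mathbb{Z}\}$ and that, in the $n$-representation finite case, every indecomposable object of $\mathscr{U}$ is isomorphic to a shift $M_{k'}[\,sn\,]$ of a summand of $M$. Concretely, on a non-projective summand $\nu_n$ acts as the $n$-Auslander--Reiten translate $\tau_n$, which is again a module in $\mcm$ (a shift by $0$), while on an indecomposable projective $P$ it gives $\nu_n(P)=\nu(P)[-n]$, a shift of the corresponding indecomposable injective; iterating and using that each $\tau_n$-orbit in $\mcm$ is finite (running between a projective and an injective), one obtains $\nu_n^i(M_k)\cong M_{k'}[\,sn\,]$ for suitable $k'$ and $s$. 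Passing to $\mathrm{K}_0$, this shows $[\nu_n^i(M_k)]=\pm[M_{k'}]$ lies in the finite set $T:=\{\pm[M_1],\dots,\pm[M_r]\}\subseteq\mathrm{K}_0$.

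Finally I would conclude by a permutation-group argument. By the two previous steps $\Phi$ carries $T$ into $T$, and since $\Phi$ is invertible and $T$ is finite, $\Phi$ restricts to a permutation of $T$, giving a group homomorphism $\langle\Phi\rangle\to\mathrm{Sym}(T)$. Because the projective classes $[P_1],\dots,[P_m]$ are among the elements of $T$ and form a $\mathbb{Z}$-basis of $\mathrm{K}_0$, any power of $\Phi$ fixing every element of $T$ must be the identity; hence the homomorphism is injective and $\langle\Phi\rangle$ embeds into the finite group $\mathrm{Sym}(T)$. Therefore $\langle\Phi\rangle$ is finite and $\Phi^d=1$ for some positive integer $d$. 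I expect the main obstacle to be the second step: one must verify rigorously that, under the $n$-representation finite hypothesis, the $\nu_n$-orbit of each indecomposable summand of $M$ stays within shifts of the finitely many $M_k$. This is precisely the point where $n$-representation finiteness is indispensable, since in the $n$-representation infinite situation these orbits are infinite and $\Phi$ has infinite order.
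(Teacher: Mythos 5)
Your argument is correct and draws on the same structural source as the paper --- the finiteness of the $\nu_n$-orbits forced by $n$-representation finiteness --- but the two proofs are packaged quite differently. The paper works directly with the basis $\{\underline{\dim}I_1,\dots,\underline{\dim}I_m\}$ of $\mathbb{Z}^m$ and Proposition \ref{per}(a) (a permutation $\sigma$ and integers $t_i\geq 0$ with $\tau_n^{t_i}I_i\cong P_{\sigma(i)}$): combining $\Phi(\underline{\dim}X)=\underline{\dim}(\tau_nX)$ for non-projective $X$ with $\Phi(\underline{\dim}P_i)=(-1)^n\underline{\dim}I_i$ yields $\Phi^{t_i+1}(\underline{\dim}I_i)=(-1)^n\underline{\dim}I_{\sigma(i)}$, the sign is cancelled by iterating along $\sigma^2$, and an explicit exponent $d$ is produced. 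You instead absorb the sign into the finite set $T=\{\pm[M_1],\dots,\pm[M_r]\}$, observe that the invertible $\Phi$ permutes $T$, and conclude by embedding $\langle\Phi\rangle$ into $\mathrm{Sym}(T)$ using that $T$ contains the basis $[P_1],\dots,[P_m]$. The structural input you flag as the main obstacle --- that every indecomposable of $\uu$ is some $M_{k'}[sn]$ --- is exactly the orbit analysis the paper carries out via Propositions \ref{per}(a) and \ref{tunu}, so there is no genuine gap. What the paper's explicit computation buys, and your abstract permutation argument does not, is a concrete expression for $d$ in terms of the $t_i$ and the order of $\sigma$, which is then exploited in Corollary \ref{co1} to bound the number of indecomposables in $\mcc^0$; what your version buys is a shorter, sign-free conclusion (indeed the paper's final choice $d=2r+\sum u_i$ should really be a common multiple of $2r$ and the $u_i$, a bookkeeping slip your argument avoids entirely).
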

For the proof of the converse of the above proposition, we need to show that for an $n$-hereditary algebra $\Lambda$ indecomposable modules in the subcategory $\add\{\tau_n^{i}(D\Lambda)\,|\,i\geq 0\}$ of $\modd\Lambda$ are uniquely determined up to isomorphism by their dimension vectors. To show this, we consider subcategories
$$\mathscr{P}\coloneqq \add\{\tau_n^{-i}(\Lambda)\,|\,i\geq 0\}\quad\text{and}\quad
	\mathscr{I}\coloneqq \add\{\tau_n^{i}(D\Lambda)\,|\,i\geq 0\}$$
of $\modd\Lambda$ and we prove the following crucial theorem.
\begin{mainthmD}$($Theorem \ref{dv}$)$\label{thm:mainB}
Let $\Lambda$ be an $n$-hereditary algebra with odd $n$. Then the following hold.
\begin{enumerate}
	\item[(a)]
	For any two indecomposable objects $X,Y$ in $\mathscr{P}$, $X\cong Y$ if and only if $\underline{\dim}X=\underline{\dim}Y$.
	\item[(b)]
	For any two indecomposable objects $X,Y$ in $\mathscr{I}$, $X\cong Y$ if and only if $\underline{\dim}X=\underline{\dim}Y$.
\end{enumerate}  
\end{mainthmD}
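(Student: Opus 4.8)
The forward implications in (a) and (b) are trivial, since isomorphic modules have equal dimension vectors, so the whole content is the converse; I will describe the argument for (a), the preinjective case (b) being formally dual.

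The plan is to reduce the statement to a comparison with a single projective and then exploit the sign $(-1)^n$ in the Euler form. First, every indecomposable object of $\mathscr{P}$ has the form $\tau_n^{-i}P$ for an indecomposable projective $P$ and some $i\ge 0$, because $\tau_n^{-i}\Lambda=\bigoplus_a\tau_n^{-i}P_a$ with each summand indecomposable or zero. Since $\Phi$ governs the action of $\tau_n^{-}$ on dimension vectors of non-injective modules in $\mcc^0$ (the defining property of the Coxeter matrix), and since every proper translate along a nonzero $\tau_n^{-}$-orbit is non-injective, I obtain $\underline{\dim}(\tau_n^{-i}P)=\Phi^{-i}\underline{\dim}(P)$. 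Given indecomposables $X=\tau_n^{-i}P$ and $Y=\tau_n^{-j}Q$ in $\mathscr{P}$ with $\underline{\dim}X=\underline{\dim}Y$, the invertibility of $\Phi$ (which follows from that of $C_\Lambda$) lets me assume $i\le j$ and rewrite the equality as $\underline{\dim}P=\Phi^{-(j-i)}\underline{\dim}Q=\underline{\dim}N$, where $N\coloneqq\tau_n^{-(j-i)}Q$. If I can force $j=i$, then $\underline{\dim}P=\underline{\dim}Q$, and since distinct indecomposable projectives have linearly independent dimension vectors (the columns of the invertible matrix $C_\Lambda$), this gives $P\cong Q$ and hence $X\cong Y$.

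So the heart of the matter is to exclude $j>i$, i.e.\ to show that an indecomposable projective $P$ and a non-projective $n$-preprojective module $N$ cannot share a dimension vector. Here I would use the Euler bilinear form $\langle-,-\rangle$ on $K_0(\modd\Lambda)$, which depends only on dimension vectors, together with $N,P\in\mcc^0$, so that $\Ext^k_\Lambda(N,P)=0$ for $0<k<n$ and $\langle N,P\rangle=\dim_K\Hom_\Lambda(N,P)+(-1)^n\dim_K\Ext^n_\Lambda(N,P)$. Because $\underline{\dim}N=\underline{\dim}P$, the form gives $\langle N,P\rangle=\langle P,P\rangle=\dim_K\End_\Lambda(P)\ge 1>0$, the projectivity of $P$ killing all higher self-extensions. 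On the other hand $N=\tau_n^{-(j-i)}Q$ with $j-i\ge 1$ is a strictly later $n$-preprojective than the projective $P$, so directedness of the $n$-preprojective modules yields $\Hom_\Lambda(N,P)=0$. Now the hypothesis that $n$ is \emph{odd} is essential: it forces $\langle N,P\rangle=-\dim_K\Ext^n_\Lambda(N,P)\le 0$, contradicting $\langle N,P\rangle>0$. (For even $n$ the sign would instead give $\langle N,P\rangle=\dim_K\Ext^n_\Lambda(N,P)\ge 0$ and no contradiction, which is exactly why the theorem is stated for odd $n$.) Hence $j=i$, finishing (a). Part (b) is obtained dually: writing indecomposables of $\mathscr{I}$ as $\tau_n^{j}I$ with $\underline{\dim}(\tau_n^{j}I)=\Phi^{j}\underline{\dim}I$, one compares an indecomposable injective $I$ with a non-injective $n$-preinjective $N=\tau_n^{j-i}I'$, uses $\langle I,N\rangle=\langle I,I\rangle=\dim_K\End_\Lambda(I)>0$ against $\Hom_\Lambda(I,N)=0$, and concludes again by oddness.

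The main obstacle is the \emph{genuine} vanishing $\Hom_\Lambda(N,P)=0$: the $n$-Auslander--Reiten duality of Theorem~\ref{thm:MainA} only delivers the stable statement $\underline{\Hom}_\Lambda(N,P)\cong D\Ext^n_\Lambda(P,\tau_n^{-(j-i)+1}Q)=0$, so to remove morphisms factoring through projectives I must argue separately. The cleanest way is to pass to the derived category: $\Hom_\Lambda(N,P)=\Hom_{\md_\Lambda}(\nu_n^{-(j-i)}Q,P)=\Hom_{\md_\Lambda}(Q,\nu_n^{\,j-i}P)$, and for $j-i\ge 1$ the complex $\nu_n^{\,j-i}P$ is concentrated in strictly positive degrees (since $\nu_nP=\nu P[-n]$ lives in degree $n$), whence the Hom vanishes because $Q$ is a projective module in degree $0$; alternatively one invokes a structural directedness statement for $n$-preprojective modules. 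The other point I would verify carefully is that the recursion $\underline{\dim}(\tau_n^{-i}P)=\Phi^{-i}\underline{\dim}(P)$ is valid along the entire orbit, i.e.\ that all intermediate translates are honest non-injective modules lying in $\mcc^0$, so that the Coxeter action applies at each step.
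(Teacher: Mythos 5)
Your argument is correct and is essentially the paper's own proof: both hinge on the formula $B([M],[M'])=\dim_K\Hom_\Lambda(M,M')+(-1)^n\dim_K\Ext^n_\Lambda(M,M')$ on $\mathscr{P}$, the sign for odd $n$ forcing nonvanishing of $\Hom$ from equality of dimension vectors, the vanishing $\Hom_{\md_\Lambda}(\nu_n^a(\Lambda),\nu_n^b(\Lambda))=0$ for $a<b$ to pin down the $\tau_n$-indices, and the invertibility of $C_\Lambda$ to identify the underlying projectives. The only difference is organizational: the paper keeps $X$ and $Y$ in place and derives $\Hom_\Lambda(X,Y)\neq 0\neq\Hom_\Lambda(Y,X)$ symmetrically, whereas you normalize by $\Phi^{i}$ and compare a projective with a strictly later translate, which amounts to the same computation.
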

In the representation theory of artin algebras, a central problem is when modules are determined by their composition factors up to isomorphism. Auslander and Reiten showed that under certain conditions modules are determined by their composition factors \cite{AR1}. In particular, by Auslander and Platzeck's work \cite{AP}, indecomposable preprojective and preinjective modules over hereditary algebras are uniquely determined by their composition factors. For an $n$-hereditary algebra $\Lambda$, the subcategories $\mathscr{P}$ and $\mathscr{I}$ can be viewed as higher dimensional analogues of the classical preprojective and preinjective components of hereditary algebras, respectively. Thus, Theorem \ref{thm:mainB} in the case $n=1$ recovers this classical result for hereditary algebras. As a corollary of Theorem \ref{thm:mainB}, we complete Proposition \ref{mainproA} in the following way.
\begin{mainthmE}$($Theorem \ref{fi-fi}$)$\label{thm:mainC}
		Let $\Lambda$ be an $n$-hereditary algebra with odd $n$. Then there exists a positive integer $d$ such that $\Phi^d=1$ if and only if $\Lambda$ is $n$-representation finite.
\end{mainthmE}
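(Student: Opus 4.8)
The forward implication is precisely Proposition \ref{mainproA}, which holds for arbitrary $n$; the plan is therefore to prove the converse, assuming $n$ odd and $\Phi^{d}=1$ for some $d\ge 1$, and to deduce that $\Lambda$ is $n$-representation finite. The first step is to read off how $\Phi$ governs dimension vectors. By the definition of the Coxeter matrix, the autoequivalence $\nu_{n}$ induces on the Grothendieck group $K_{0}(\md_{\Lambda})\cong K_{0}(\modd\Lambda)$ the linear map with matrix $\Phi=(-1)^{n}C_{\Lambda}^{t}C_{\Lambda}^{-1}$, so that $\nu_{n}^{-1}$ induces $\Phi^{-1}$. Hence, for each indecomposable projective $P_{j}$ and each $i\ge 0$ for which $\tau_{n}^{-i}(P_{j})=\nu_{n}^{-i}(P_{j})$ is concentrated in degree $0$, its dimension vector equals $\Phi^{-i}\,\underline{\dim}\,P_{j}$.

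From $\Phi^{d}=1$ it follows that $\Phi^{-i}$ ranges over the finite set $\{\Phi^{-i}\mid 0\le i<d\}$, so the vectors $\Phi^{-i}\,\underline{\dim}\,P_{j}$ take only finitely many values as $i$ runs over $i\ge 0$ and $j$ over the finitely many indecomposable projectives. Because $\tau_{n}^{-}$ is an autoequivalence, each $\tau_{n}^{-i}(P_{j})$ is indecomposable, and every indecomposable object of $\mathscr{P}=\add\{\tau_{n}^{-i}(\Lambda)\mid i\ge 0\}$ is isomorphic to one of them; thus the indecomposables of $\mathscr{P}$ realize only finitely many dimension vectors. Invoking Theorem \ref{thm:mainB}(a), which is the one place where the oddness of $n$ enters, each such dimension vector is realized by at most one indecomposable of $\mathscr{P}$ up to isomorphism, whence $\mathscr{P}$ has only finitely many indecomposable objects up to isomorphism.

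It remains to convert this finiteness into $n$-representation finiteness, which I would do through the Herschend--Iyama--Oppermann dichotomy: an $n$-hereditary algebra is either $n$-representation finite or $n$-representation infinite. Were $\Lambda$ $n$-representation infinite, every $\nu_{n}^{-i}(\Lambda)$ would be concentrated in degree $0$ and the family $\{\tau_{n}^{-i}(P_{j})\}_{i\ge 0,\,j}$ would consist of infinitely many pairwise non-isomorphic indecomposable objects of $\mathscr{P}$, contradicting the previous paragraph; hence $\Lambda$ is $n$-representation finite. Granting Theorem \ref{thm:mainB}, the argument is short, and the only points demanding care are the identification of the $\nu_{n}^{-1}$-action with $\Phi^{-1}$ on $K_{0}$ and the observation that every $n$-preprojective indecomposable occurs among the $\tau_{n}^{-i}(P_{j})$, so that $\Phi^{d}=1$ genuinely bounds the supply of dimension vectors. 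The substantive obstacle lies upstream, in Theorem \ref{thm:mainB} itself.
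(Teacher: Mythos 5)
Your proposal is correct and follows essentially the same route as the paper: forward direction by Proposition~\ref{fi-re}, and for the converse the dichotomy of Theorem~\ref{de} plus Theorem~\ref{dv} plus the bijection of Proposition~\ref{Thio} to derive a contradiction in the $n$-representation infinite case. The paper works dually with $\mathscr{I}$ and the injectives, concluding $I_i\cong\tau_n^dI_i$ directly from $\Phi^d(\underline{\dim}I_i)=\underline{\dim}(\tau_n^dI_i)$, whereas you count dimension vectors in $\mathscr{P}$; the difference is cosmetic.
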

Let $\mathrm{K}_0(\modd\Lambda)$ denotes the Grothendieck group of $\Lambda$ and for any $X\in\modd\Lambda$, $[X]$ denotes the corresponding element in $\mathrm{K}_0(\modd\Lambda)$.
For a finite dimensional $K$-algebra $\Lambda$ of finite global dimension, there exists a well-defined bilinear form
$B:\mathrm{K}_0(\modd\Lambda)\times\mathrm{K}_0(\modd\Lambda)\to\mathbb{Z}$ given by $B([X],[Y])=\sum_{i\geq 0}(-1)^i\dim_K\Ext^i_\Lambda(X,Y)$. The corresponding quadratic form $\chi([X])\coloneqq B([X],[X])$ is called the \textit{homological quadratic form} of $\Lambda$. This form was first introduced by Ringel \cite{Ri}. In section \ref{g}, we introduce a Grothendieck group $\mathrm{K}_0(\mathcal{C}^0)$ associated with $n$-hereditary algebra $\Lambda$ and the subcategory $\mathcal{C}^0$ of $\modd\Lambda$ by using exact sequences with $n+2$ terms in $\mcc^0$. Since any exact sequence with $n+2$ terms in $\mcc^0$ is also an exact sequence in $\modd\Lambda$, we can restrict the bilinear form $B$ and the corresponding homological quadratic form $\chi$ of $\Lambda$ to $\mathrm{K}_0(\mcc^0)$. Then we prove the following theorem.
\begin{mainthmF}$($Theorem \ref{form}$)$\label{thm:mainD}	
	Let $\Lambda$ be an $n$-hereditary algebra with odd $n$. If the restriction of the homological quadratic form $\chi$ to $\mathrm{K}_0(\mcc^0)$ is positive definite, then $\Lambda$ is $n$-representation finite.
\end{mainthmF}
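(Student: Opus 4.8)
The plan is to reduce to Theorem~\ref{fi-fi}: since $n$ is odd, it suffices to show that positive definiteness of $\chi|_{\mathrm{K}_0(\mcc^0)}$ forces $\Phi^d=1$ for some $d\geq 1$. The first step is to transport the hypothesis to the ordinary Grothendieck group. By the isomorphism $\mathrm{K}_0(\mcc^0)\cong\mathrm{K}_0(\modd\Lambda)$ established in Section~\ref{g}, which is induced by the dimension vector, and because the restriction of $B$ to $\mcc^0$ is computed by the same alternating sum of $\Ext$-dimensions as on all of $\modd\Lambda$, this isomorphism carries $\chi|_{\mathrm{K}_0(\mcc^0)}$ to the homological quadratic form $\chi$ on $\mathrm{K}_0(\modd\Lambda)\cong\mathbb{Z}^m$, where $m$ is the number of simple $\Lambda$-modules. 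Thus the hypothesis says exactly that $\chi$ is positive definite on $\mathbb{Z}^m$, equivalently that the symmetric bilinear form $G(x,y)\coloneqq\chi(x+y)-\chi(x)-\chi(y)=B(x,y)+B(y,x)$ is positive definite.

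The second step is to exhibit $\Phi$ as an integral isometry of $G$. Writing the bilinear form $B$ in the basis of simple modules gives a matrix $E$ with $E=C_\Lambda^{-t}$; since $\Lambda$ has finite global dimension, $C_\Lambda$ is invertible over $\mathbb{Z}$, so $\Phi=(-1)^nC_\Lambda^tC_\Lambda^{-1}$ has integer entries and lies in $\mathrm{GL}_m(\mathbb{Z})$. Moreover $\Phi$ represents the action on $\mathrm{K}_0(\md_\Lambda)\cong\mathrm{K}_0(\modd\Lambda)$ of the autoequivalence $\nu_n$. As a triangle autoequivalence, $\nu_n$ preserves the Euler form, so $\langle\Phi x,\Phi y\rangle=\langle x,y\rangle$ for all $x,y$, that is $\Phi^tE\Phi=E$. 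Transposing and adding yields $\Phi^tG\Phi=G$, so $\Phi$ lies in the orthogonal group of the positive definite integral form $G$.

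Finally, the orthogonal group of a positive definite integral quadratic form is finite: any such isometry permutes the finitely many lattice vectors of each bounded $G$-norm, hence is determined by its effect on a basis, and so the group embeds into a finite symmetric group. Consequently $\Phi$ has finite order and $\Phi^d=1$ for some $d\geq 1$, and Theorem~\ref{fi-fi} (where oddness of $n$ is used) gives that $\Lambda$ is $n$-representation finite. The step requiring the most care is the second one: verifying the isometry identity $\Phi^tE\Phi=E$ and the integrality of $\Phi$, together with the precise compatibility between the form defined abstractly on $\mathrm{K}_0(\mcc^0)$ and the homological quadratic form $\chi$ on $\mathrm{K}_0(\modd\Lambda)$; once these are secured, the finiteness of the orthogonal group and the appeal to Theorem~\ref{fi-fi} are routine.
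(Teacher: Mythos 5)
Your proof is correct, but it takes a genuinely different route from the paper. The paper argues by contradiction: if $\Lambda$ is not $n$-representation finite, Theorem~\ref{form0} produces infinitely many pairwise non-isomorphic indecomposables in $\mathscr{P}$ with a fixed value of $\chi$; Theorem~\ref{dv}(a) (the point where odd $n$ enters) converts these into infinitely many distinct dimension vectors, contradicting the boundedness of sublevel sets of a positive definite form on $\mathbb{R}\otimes_{\mathbb{Z}}\mathrm{K}_0(\mcc^0)$. You instead reduce directly to Theorem~\ref{fi-fi} by showing $\Phi$ has finite order: your isometry identity $\Phi^tE\Phi=E$ with $E=C_\Lambda^{-t}$ checks out by direct computation (and is also exactly Lemma~\ref{x-fi}(a) of the paper, which you could simply cite), $\Phi\in\mathrm{GL}_m(\mathbb{Z})$ by Proposition~\ref{car}, and the orthogonal group of a positive definite integral form is finite. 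Both arguments ultimately lean on Theorem~\ref{dv} -- yours through Theorem~\ref{fi-fi}, which uses part (b), the paper's through part (a) -- and both need the same analytic ingredient in disguise: your claim that a positive definite integral form has finitely many lattice vectors of bounded norm requires knowing the real extension of the form is positive definite, which for a rational form positive on all nonzero integer vectors follows from rational diagonalization (this is precisely the Gram--Schmidt step the paper carries out explicitly; you should make it explicit too, since positivity on $\mathbb{Z}^m\setminus\{0\}$ does not formally transfer to $\mathbb{R}^m\setminus\{0\}$ without it). Granting that standard point, your argument is a clean and somewhat more conceptual alternative: it makes the mechanism visible ($\Phi$ is an isometry of a definite lattice, hence of finite order) and folds the whole theorem into the Coxeter-matrix criterion, whereas the paper's proof yields the additional quantitative information in Theorem~\ref{form0} along the way.
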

For any $X\in\mcc^0$, let $[X]_{\mcc^0}$ denote the corresponding element in $\mathrm{K}_0(\mcc^0)$. In Theorem \ref{thm:mainD}, we use $[X]$ instead of $[X]_{\mcc^0}$ since we prove the following theorem.
\begin{mainthmG}$($Theorem \ref{T1}$)$\label{thm:mainE}
	Let $\Lambda$ be an $n$-hereditary algebra. Then, $\mathrm{K}_0(\mcc^0)\cong\mathrm{K}_0(\modd\Lambda)$.
\end{mainthmG}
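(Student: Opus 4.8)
The plan is to realize the isomorphism as the map induced by the inclusion $\mcc^0\hookrightarrow\modd\Lambda$ and to produce an explicit inverse. First I would define
$$\phi:\mathrm{K}_0(\mcc^0)\longrightarrow\mathrm{K}_0(\modd\Lambda),\qquad [X]_{\mcc^0}\longmapsto[X].$$
To see this is well defined, recall that the relations defining $\mathrm{K}_0(\mcc^0)$ come from exact sequences $0\to X_0\to X_1\to\cdots\to X_{n+1}\to 0$ with all terms in $\mcc^0$. Such a sequence is in particular a bounded exact sequence in the abelian category $\modd\Lambda$; splitting it into short exact sequences through its images shows that $\sum_{i=0}^{n+1}(-1)^i[X_i]=0$ already holds in $\mathrm{K}_0(\modd\Lambda)$. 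Hence $\phi$ sends each defining relation to $0$ and is a well-defined homomorphism.

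Next I would check surjectivity. Since $\Lambda\in\mcc^0$ and $\mcc^0$ is closed under direct summands, every indecomposable projective $P_i$ lies in $\mcc^0$. As $\gd\Lambda\le n<\infty$, every module has a finite projective resolution, so the classes $\{[P_i]\}$ generate $\mathrm{K}_0(\modd\Lambda)$; each equals $\phi([P_i]_{\mcc^0})$, so $\phi$ is onto.

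For the inverse, I would use that finiteness of the global dimension makes $\mathrm{K}_0(\modd\Lambda)$ free abelian with basis the classes $\{[P_i]\}$ of the indecomposable projectives, and define a homomorphism $\psi:\mathrm{K}_0(\modd\Lambda)\to\mathrm{K}_0(\mcc^0)$ by $[P_i]\mapsto[P_i]_{\mcc^0}$. The composite $\phi\circ\psi$ is the identity on this basis, hence the identity. The essential point is that $\psi\circ\phi=\mathrm{id}$. Given $X\in\mcc^0$, I would take a projective resolution
$$0\to P^d\to\cdots\to P^1\to P^0\to X\to 0,\qquad d=\pd X\le n,$$
and, if $d<n$, pad it with copies of the zero object so that it becomes a sequence with exactly $n+2$ terms, all lying in $\mcc^0$ (the projectives and $X$ are in $\mcc^0$, and $0\in\mcc^0$). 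This is one of the $(n+2)$-term exact sequences used to define $\mathrm{K}_0(\mcc^0)$, so it yields the relation $[X]_{\mcc^0}=\sum_{j=0}^{d}(-1)^j[P^j]_{\mcc^0}$. This is precisely the expression of $[X]$ in terms of $\{[P_i]\}$ transported by $\psi$, so $\psi(\phi([X]_{\mcc^0}))=\psi([X])=[X]_{\mcc^0}$; as such classes generate $\mathrm{K}_0(\mcc^0)$, we obtain $\psi\circ\phi=\mathrm{id}$.

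The main obstacle I anticipate is the bookkeeping in this last step: one must confirm that a (padded) projective resolution genuinely qualifies as one of the $(n+2)$-term exact sequences defining the relations of $\mathrm{K}_0(\mcc^0)$, and that $[\,\cdot\,]_{\mcc^0}$ is additive on direct sums, so that $[P^j]_{\mcc^0}=\sum_i a_{ij}[P_i]_{\mcc^0}$ when $P^j\cong\bigoplus_i P_i^{a_{ij}}$. The latter follows by applying the relation to the split $(n+2)$-term sequence $0\to A\to A\oplus B\to B\to 0\to\cdots\to 0$. Once these two points are settled everything else is formal, and the bound $\gd\Lambda\le n$ is exactly what guarantees the projective resolution fits into an $(n+2)$-term sequence.
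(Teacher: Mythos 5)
Your proposal is correct, and its overall architecture matches the paper's: both realize the isomorphism as the map induced by the inclusion $\mcc^0\hookrightarrow\modd\Lambda$ (well defined because an $(n+2)$-term exact sequence in $\mcc^0$ is exact in $\modd\Lambda$) and then build an explicit inverse out of projective resolutions, using $\gd\Lambda\le n$ to fit each resolution into one of the $(n+2)$-term sequences that define the relations of $\mathrm{K}_0(\mcc^0)$. Where you genuinely diverge is in how the inverse is shown to be well defined. The paper constructs a map $\mathrm{Ind}_{\mcc^0}\colon X\mapsto\sum_{i}(-1)^i[P_i^X]_0$ on all of $\modd\Lambda$ and verifies from scratch that it descends to $\mathrm{K}_0(\modd\Lambda)$: independence of the chosen resolution via Schanuel's lemma, and compatibility with short exact sequences via the Horseshoe lemma. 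You instead invoke the standard fact that, for $\gd\Lambda<\infty$, the classes $\{[P_1],\dots,[P_m]\}$ form a $\mathbb{Z}$-basis of the free abelian group $\mathrm{K}_0(\modd\Lambda)$ (the paper itself cites this from Auslander--Reiten--Smal{\o}), so that defining $\psi$ on this basis is automatically legitimate and only the identity $\psi\circ\phi=\mathrm{id}$ needs the $(n+2)$-term relation coming from a (padded) projective resolution of $X\in\mcc^0$. Your route is shorter and avoids the diagram chase, at the cost of outsourcing the well-definedness to the cited basis theorem, whose proof is essentially the Schanuel/Horseshoe argument the paper writes out; the paper's version is self-contained and makes the inverse visibly functorial in the resolution, which is mildly more informative but logically equivalent. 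Your two flagged bookkeeping points are both fine: padding by zero objects stays inside the additive subcategory $\mcc^0$, and additivity of $[\,\cdot\,]_{\mcc^0}$ on direct sums is already built into the split Grothendieck group $\mathrm{K}_0(\mcc^0,0)$ before any exact-sequence relations are imposed.
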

If $\Lambda$ is an $n$-representation finite algebra, then $\mcc^0$ is the unique $n$-cluster tilting subcategory of $\modd\Lambda$ (see Theorem \ref{th1} in Preliminaries) and hence also an $n$-abelian category (see \cite[Theorem 3.16]{J}). Grothendieck groups of $n$-abelian categories have been introduced in \cite{Re}. Furthermore, it has been shown that for an $n$-representation finite algebra $\Lambda$, the Grothendieck group of the $n$-abelian category $\mcc^0$ is isomorphic to $\mathrm{K}_0(\modd\Lambda)$ (see \cite[Theorem 3.7]{Re} or \cite[Theorems 3.9 and 3.11]{DN1}). Thus, Theorem \ref{thm:mainE} generalizes this isomorphism to $n$-hereditary algebras. To prove Theorem \ref{thm:mainD}, we apply Theorem \ref{thm:mainB} together the following fact.
\begin{mainthmH}$($Theorem \ref{form0}$)$\label{thm:mainF}
	Let $\Lambda$ be an $n$-hereditary algebra with the complete set $\{P_1,\dots,P_m\}$ and $\{I_1,\dots,I_m\}$ of the isomorphism classes of indecomposable projective and indecomposable injective $\Lambda$-modules, respectively. Then the following are equivalent.
	\begin{enumerate}
		\item[(a)]
		$\Lambda$ is $n$-representation finite.
		\item[(b)]
		$\{X\in\ind\mathscr{P}\,|\,\chi([X])=l\}$ is a finite set for any positive integer $l$.
		\item[(c)]
		$\{X\in\ind\mathscr{P}\,|\,\chi([X])=\chi([P_i])\}$ is a finite set for any $i\in\{1,\dots,m\}$.
		\item[(d)]
		$\{X\in\ind\mathscr{I}\,|\,\chi([X])=l\}$ is a finite set for any positive integer $l$.
		\item[(e)]
		$\{X\in\ind\mathscr{I}\,|\,\chi([X])=\chi([I_i])\}$ is a finite set for any $i\in\{1,\dots,m\}$.
		\item[(f)]
		$\{X\in\ind\mcc^0\,|\,\chi([X])=l\}$ is a finite set for any positive integer $l$.
	\end{enumerate}
\end{mainthmH}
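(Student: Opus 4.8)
\medskip

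The plan is to prove the six statements equivalent by closing the two cycles $(a)\Rightarrow(b)\Rightarrow(c)\Rightarrow(a)$ and $(a)\Rightarrow(d)\Rightarrow(e)\Rightarrow(a)$, and then threading (f) through them via $(a)\Rightarrow(f)\Rightarrow(b)$. Several of these arrows are purely formal. For $(a)\Rightarrow(b)$, $(a)\Rightarrow(d)$ and $(a)\Rightarrow(f)$ I would use that when $\Lambda$ is $n$-representation finite the subcategory $\mcc^0$ is the unique $n$-cluster tilting subcategory of $\modd\Lambda$ and has only finitely many indecomposable objects (Theorem \ref{th1}); since $\mathscr{P},\mathscr{I}\subseteq\mcc^0$, each of $\ind\mathscr{P}$, $\ind\mathscr{I}$, $\ind\mcc^0$ is then finite and every fiber of $\chi$ is automatically finite. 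The implications $(b)\Rightarrow(c)$ and $(d)\Rightarrow(e)$ are mere specializations: as $\Ext^i_\Lambda(P_j,-)=0$ and $\Ext^i_\Lambda(-,I_j)=0$ for $i\geq 1$, we have $\chi([P_j])=\dim_K\End_\Lambda(P_j)$ and $\chi([I_j])=\dim_K\End_\Lambda(I_j)$, which are positive integers, so (c) and (e) are precisely the instances $l=\chi([P_j])$ and $l=\chi([I_j])$ of (b) and (d). Finally $(f)\Rightarrow(b)$ follows from the inclusion $\ind\mathscr{P}\subseteq\ind\mcc^0$, which exhibits each fiber in $\mathscr{P}$ as a subset of the corresponding fiber in $\mcc^0$.

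The substantive content lies in $(c)\Rightarrow(a)$ and $(e)\Rightarrow(a)$, and the key tool is the invariance of the homological quadratic form under the Coxeter transformation. I would first record that $\chi(\Phi v)=\chi(v)$ for every $v\in\mathrm{K}_0(\modd\Lambda)$. Writing $E$ for the Gram matrix of the Euler form $B$ in the basis of simple modules, so that $E=C_\Lambda^{-t}$ and $B(u,v)=u^tEv$, a direct computation from $\Phi=(-1)^nC_\Lambda^tC_\Lambda^{-1}$ gives $\Phi^tE\Phi=E$; here the two factors of $(-1)^n$ cancel, so the identity is insensitive to the parity of $n$. Consequently $B(\Phi u,\Phi v)=B(u,v)$, and in particular $\chi$ is constant on each $\Phi$-orbit and each $\Phi^{-1}$-orbit in $\mathrm{K}_0(\modd\Lambda)$.

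For $(c)\Rightarrow(a)$ I argue by contraposition. If $\Lambda$ is not $n$-representation finite then, being $n$-hereditary, it is $n$-representation infinite, so $\tau_n^{-i}(\Lambda)\in\modd\Lambda$ for all $i\geq 0$ and the modules $\tau_n^{-i}(P_j)$ (with $i\geq 0$, $1\leq j\leq m$) are pairwise non-isomorphic indecomposables whose isomorphism classes exhaust $\ind\mathscr{P}$; in particular $\ind\mathscr{P}$ is infinite (see \cite{HIO}). Since the action of $\nu_n$ on $\mathrm{K}_0(\modd\Lambda)$ is given by $\Phi$ and $\tau_n^{-i}(\Lambda)=\nu_n^{-i}(\Lambda)$ is concentrated in degree $0$, we obtain $[\tau_n^{-i}(P_j)]=\Phi^{-i}[P_j]$, whence $\chi([\tau_n^{-i}(P_j)])=\chi([P_j])$ by the invariance above. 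Thus the $\chi$-value of every object of $\ind\mathscr{P}$ lies in the finite set $\{\chi([P_1]),\dots,\chi([P_m])\}$; as $\ind\mathscr{P}$ is infinite, the pigeonhole principle produces an index $j_0$ for which $\{X\in\ind\mathscr{P}\mid\chi([X])=\chi([P_{j_0}])\}$ is infinite, contradicting (c). The implication $(e)\Rightarrow(a)$ is proved symmetrically, replacing $\tau_n^{-i}(P_j)$ by $\tau_n^{i}(I_j)$ and using $[\tau_n^{i}(I_j)]=\Phi^{i}[I_j]$ together with the infinitude of $\ind\mathscr{I}$.

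I expect the main obstacle to be the bookkeeping behind the identity $[\tau_n^{-i}(P_j)]=\Phi^{-i}[P_j]$ rather than the pigeonhole argument itself: it rests on the facts that for an $n$-representation infinite algebra $\nu_n^{-i}(\Lambda)$ is a module (so that passing to $H^0$ does not change its class in $\mathrm{K}_0$) and that $\Phi$ is exactly the matrix describing the action of $\nu_n$ on $\mathrm{K}_0(\modd\Lambda)$. Both are part of the structure theory of $n$-hereditary algebras recalled earlier, and once they are in place the quadratic-form computation and the counting argument are routine. The two cycles, together with $(a)\Rightarrow(f)\Rightarrow(b)$, then yield the equivalence of all six conditions.
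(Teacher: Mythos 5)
Your proof is correct and follows the same overall architecture as the paper's: the formal implications from (a) and into (c)/(e) are handled identically, and the substantive step $(\mathrm{c})\Rightarrow(\mathrm{a})$ proceeds by contraposition using the dichotomy of Theorem \ref{de}, the infinitude of $\ind\mathscr{P}$ in the $n$-representation infinite case (Proposition \ref{Thio}(a)), and the constancy of $\chi$ along the $\tau_n^-$-orbit of each indecomposable projective. The one genuine difference is how that constancy is established: you derive it from the identity $\Phi^t(C_\Lambda^{-1})^t\Phi=(C_\Lambda^{-1})^t$ (i.e.\ Lemma \ref{x-fi}) together with $[\tau_n^{-i}P_j]=\Phi^{-i}[P_j]$ (Proposition \ref{fim}(b)), whereas the paper computes $\chi([\tau_n^{-i}P_j])=\dim_K\End_\Lambda(\tau_n^{-i}P_j)-\dim_K\Ext_\Lambda^n(\tau_n^{-i}P_j,\tau_n^{-i}P_j)=\dim_K\End_\Lambda(P_j)$ directly from Remark \ref{fo} and Theorem \ref{n-ext}(a)(ii),(iv). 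Your route is marginally cleaner in that it needs no vanishing of $\Ext^n$ and makes the parity-independence of the argument transparent; the paper's route has the small advantage of identifying the common value explicitly as $\dim_K\End_\Lambda(P_j)$, which it reuses in Corollary \ref{form00}. Both are fully justified by results already proved in the paper, so there is no gap. (A cosmetic remark: the pigeonhole step is not needed, since for any single $j$ the orbit $\{\tau_n^{-i}P_j\mid i\geq 0\}$ is already an infinite subset of one fiber.)
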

The above theorem shows that an $n$-hereditary algebra $\Lambda$ with the acyclic Gabriel quiver is $n$-representation finite if and only if the number of isomorphism classes of indecomposable $\Lambda$-modules $X$ in $\mcc^0$ with $\chi([X])=1$ is finite. 

Finally, by giving explicit counterexamples, we show that unlike the classical case the converse of Theorem \ref{thm:mainD} does not hold.
	
	The structure of this paper is as follows. In Section \ref{s2}, we prepare some fundamental concepts and known facts that will be used throughout the paper. In section \ref{s3}, we prove $n$-Auslander-Reiten duality for $n$-hereditary algebras. Also, we generalize some well-known properties of hereditary algebras to the $n$-hereditary case and prove Theorem \ref{thm:MainB}. In section \ref{s4}, we give useful  formulas for the dimension vector of the $n$-Auslander--Reiten translations of indecomposable non-projective and non-injective modules in $\mcc^0$ in terms of Coxeter transformation. Section \ref{s5} is devoted to the proof of Theorem \ref{thm:mainB}. We prove Proposition \ref{mainproA} and Theorem \ref{thm:mainC} in section \ref{s6}. Finally, in the last section, we introduce the Grothendieck group $\mathrm{K}_0(\mcc^0)$, for $n$-hereditary algebras. Then, we prove Theorems \ref{thm:mainE}, \ref{thm:mainF} and \ref{thm:mainD}. Furthermore, we show that the converse of Theorem \ref{thm:mainD} does not hold by providing  counterexamples.
\subsection{Notation}
Throughout this paper, we fix a positive integer $n$. We assume that $K$ is a field and we denote by $D$ the $K$-dual $\Hom_K(-,K)$. All
algebras in this paper are indecomposable and finite dimensional $K$-algebra. Also, all modules are right modules and for any indecomposable finite dimensional $K$-algebra $\Lambda$, we denote by $\modd \Lambda$ the abelian category of all finitely generated right $\Lambda$-modules. Also, we denote by $\mathrm{proj}\,\Lambda$ and $\mathrm{inj}\,\Lambda$ the full subcategories of $\modd\Lambda$ consisting of projective and injective objects, respectively. For the bounded derived category of $\modd \Lambda$, we apply the notation $\md_\Lambda$. If $\mathcal{X}$ is a class of objects of an additive category $\mathcal{C}$, then we denote by $\add\mathcal{X}$ the full subcategory of $\mathcal{C}$ whose objects are direct summands of finite direct sums of objects in $\mathcal{X}$. Specially, for an object $M\in\mathcal{C}$, we denote by $\add M$ the subcategory of $\mathcal{C}$ consisting of direct summands of finite direct sums of copies
of $M$. Also, we denote by $\ind\mathcal{C}$ the class of isomorphism classes of indecomposable objects in the Krull--Schmidt category $\mathcal{C}$. For full subcategories $\{\mathcal{X}_1,\dots,\mathcal{X}_t\}$ of $\mathcal{C}$, we denote by $\bigvee_{i=1}^t\mathcal{X}_i$
 the full subcategory $\add(\bigcup_{i=1}^t\ind\mathcal{X}_i)$ of $\mathcal{C}$.

\section{Preliminaries}\label{s2}
In this section, we collect some basic definitions and facts of higher dimensional Auslander--Reiten theory that we need throughout this paper. We refer the reader to papers \cite{HIO,I3,I4,I1,IO} for further details.

Recall that a subcategory $\mathcal{M}$ of $\modd\Lambda$ is called \textit{contravariantly finite} if for any $X\in\modd \Lambda$, there exists a morphism
$f\in\Hom_\Lambda(M,X)$ with $M\in\mathcal{M}$ such that the sequence $\Hom_\Lambda(-,M)
{\overset{f}{\to}}
\Hom_\Lambda(-,X) \to 0$ of abelian groups is exact on $\mathcal{M}$.
Dually a \textit{covariantly finite} subcategory of $\modd\Lambda$ is defined. A contravariantly finite and covariantly finite subcategory of $\modd \Lambda$ is called \textit{functorially finite} subcategory (see \cite[Page 81]{AS1}).

Higher dimensional Auslander--Reiten theory was introduced by Iyama \cite{I1}. One of the main concepts in this theory is $n$-cluster tilting subcategories. A functorially finite full subcategory $\mathcal{M}$ of $\modd\Lambda$ is called \textit{$n$-cluster tilting} if
\begin{align*}
\mathcal{M}&=\{X\in\text{$\modd\Lambda$}\,|\,\mathrm{Ext}^i_\Lambda(X,\mathcal{M})=0, \,\,\, \text{for } 0<i<n\}\\
&=\{X\in\text{$\modd\Lambda$}\,|\,\mathrm{Ext}^i_\Lambda(\mathcal{M},X)=0, \,\,\, \text{for } 0<i<n\}.
\end{align*}
Also, $M\in\modd\Lambda$ is called an \textit{$n$-cluster tilting object} if $\add M$ is an $n$-cluster tilting subcategory of $\modd\Lambda$ (see \cite[Definition 2.2]{I1} and \cite[Definition 1.1]{I4}). 

In the rest of the paper, we assume that $\Lambda$ has the global dimension at most $n$. We denote the \textit{Nakayama functor} and its quasi-inverse on $\md_\Lambda$ by
 \begin{align*}
 \nu&\coloneqq D\boldsymbol{\mathrm{R}}\Hom_\Lambda(-,\Lambda):\md_\Lambda\longrightarrow \md_\Lambda,\\
 \nu^{-1}&\coloneqq \boldsymbol{\mathrm{R}}\Hom_{\Lambda^{\mathrm{op}}}(D-,\Lambda):\md_\Lambda\longrightarrow \md_\Lambda,
 \end{align*}
respectively. We denote by $[1]$ the suspension in $\md_\Lambda$ and we recall two important autoequivalences
 \begin{align*}
 \nu_n&\coloneqq \nu\circ[-n] :\md_\Lambda\longrightarrow \md_\Lambda,\\
 \nu^{-1}_n&\coloneqq \nu^{-1}\circ[n] :\md_\Lambda\longrightarrow \md_\Lambda.
 \end{align*}
Under the assumption $\gd\Lambda\leq n$, we have the following useful property.
\begin{proposition}[\protect{\cite[Proposition 2.3]{HIO}}]\label{thio0}
Assume that $i,j\in\mathbb{Z}$ and $i<j$. Then,
$$\Hom_{\md_\Lambda}(\nu_n^i(\Lambda),\nu_n^j(\Lambda))=0.$$
\end{proposition}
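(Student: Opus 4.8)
The plan is to reduce to the case $i=0$ and then read off the vanishing from cohomological degree considerations. Since $\nu_n$ is an autoequivalence of $\md_\Lambda$, applying $\nu_n^{-i}$ gives
$$\Hom_{\md_\Lambda}(\nu_n^i(\Lambda),\nu_n^j(\Lambda))\cong\Hom_{\md_\Lambda}(\Lambda,\nu_n^{k}(\Lambda)),\qquad k\coloneqq j-i\geq 1,$$
so it suffices to prove $\Hom_{\md_\Lambda}(\Lambda,\nu_n^k(\Lambda))=0$ for every $k\geq 1$. Because $\Lambda$ is projective, $\boldsymbol{\mathrm{R}}\Hom_\Lambda(\Lambda,-)$ is essentially the forgetful functor on complexes, whence $\Hom_{\md_\Lambda}(\Lambda,C)\cong H^0(C)$ for any $C\in\md_\Lambda$. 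Thus the entire statement reduces to showing that the complex $\nu_n^k(\Lambda)$ has no cohomology in degree $0$.

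The key input will be a control on cohomological support. For a module $M\in\modd\Lambda$ placed in degree $0$ we have $\nu_n(M)=\nu(M)[-n]=D\boldsymbol{\mathrm{R}}\Hom_\Lambda(M,\Lambda)[-n]$, and since $\gd\Lambda\leq n$ the cohomology $H^{-i}(\nu M)=D\Ext^i_\Lambda(M,\Lambda)$ is concentrated in $0\leq i\leq n$; after the shift $[-n]$ this means $\nu_n(M)$ has cohomology concentrated in degrees $0,\dots,n$. I would then upgrade this to arbitrary complexes, proving the support lemma: if $X\in\md_\Lambda$ satisfies $H^p(X)=0$ for all $p<a$, then $H^p(\nu_n X)=0$ for all $p<a$. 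The argument is a dévissage along the canonical truncation triangle $\tau_{\leq a}X\to X\to\tau_{\geq a+1}X\to$, using that $\tau_{\leq a}X\cong H^a(X)[-a]$ is a shifted module, so $\nu_n(\tau_{\leq a}X)$ lives in degrees $a,\dots,a+n$ by the module computation, while an induction on the cohomological width of $X$ handles $\tau_{\geq a+1}X$. Since $\nu_n$ is a triangle functor, the long exact cohomology sequence of the resulting triangle then forces $H^{p}(\nu_n X)=0$ for $p<a$.

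Finally I would assemble the pieces. As $\Lambda$ is projective, $\nu\Lambda=D\Lambda$ is a module in degree $0$, so $\nu_n\Lambda=D\Lambda[-n]$ is concentrated in degree $n$, and in particular lies in cohomological degrees $\geq n$. Applying the support lemma $k-1$ times with $a=n$ shows that $\nu_n^k(\Lambda)=\nu_n^{k-1}(\nu_n\Lambda)$ has cohomology concentrated in degrees $\geq n$ for every $k\geq 1$. Since $n\geq 1$, this yields $H^0(\nu_n^k(\Lambda))=0$, which is exactly the required vanishing.

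The step I expect to be the main obstacle is the dévissage establishing the support lemma: one must track the shift conventions carefully and verify that transporting the truncation triangle through the triangle functor $\nu_n$ produces a long exact sequence that genuinely propagates the lower bound on cohomological degree. The base computation $\nu_n(\modd\Lambda)\subseteq\md_\Lambda^{[0,n]}$ is the point where the hypothesis $\gd\Lambda\leq n$ is used in an essential way, and the small numerical improvement from $\geq 0$ to $\geq n$ (coming from $\nu_n\Lambda$ being concentrated strictly in degree $n$) is precisely what upgrades mere nonnegativity of the support into the strict vanishing of $H^0$.
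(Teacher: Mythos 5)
Your argument is correct: the reduction to $\Hom_{\md_\Lambda}(\Lambda,\nu_n^k(\Lambda))\cong H^0(\nu_n^k(\Lambda))$, the computation $H^l(\nu_n M)\cong D\Ext_\Lambda^{n-l}(M,\Lambda)$ placing $\nu_n(\modd\Lambda)$ in degrees $[0,n]$, and the truncation d\'evissage propagating the lower bound on cohomological degree through $\nu_n$ all go through, and $\nu_n\Lambda=D\Lambda[-n]$ then forces $H^0(\nu_n^k\Lambda)=0$ for $k\geq1$. The paper does not prove this statement itself but imports it from \cite[Proposition 2.3]{HIO}, and your amplitude argument is essentially the proof given there, so no comparison beyond that is needed.
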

We recall the \textit{$n$-Auslander–Reiten translations}
\begin{align*}
	\tau_n &\coloneqq D\Ext^n_{\Lambda}(-,\Lambda):\modd\Lambda\longrightarrow\modd\Lambda,\\
	\tau_n^{-}&\coloneqq \Ext^n_{\Lambda^{\mathrm{op}}}(D-,\Lambda):\modd\Lambda\longrightarrow\modd\Lambda.
\end{align*}
Note that $\tau_n\cong\mathrm{H}^0(\nu_n-)$ and $\tau_n^{-}\cong\mathrm{H}^0(\nu_n^{-1}-)$,
 where $\mathrm{H}^0$ is $0$-th homology.

$n$-Auslander--Reiten translations play an important role in higher Auslander--Reiten theory. Specially, the structure of $n$-representation finite algebras is controlled by them. 
\begin{definition}[\protect{\cite[Definition 2.2]{IO}}]
$\Lambda$ is called \textit{$n$-representation finite} if $\gd\Lambda\leq n$ and there exists an $n$-cluster tilting object in $\modd\Lambda$.
\end{definition}
We have the following facts of $n$-representation finite algebras.
\begin{proposition}[\protect{\cite[Proposition 1.3 and Theorem 1.6]{I4}}]\label{per}
Let $\Lambda$ be an $n$-representation finite algebra and $M\in\modd\Lambda$ be an $n$-cluster tilting object.
\begin{itemize}
	\item[($\rm{a}$)]
		Assume that $\{P_1,\dots,P_m\}$ and $\{I_1,\dots,I_m\}$ are the complete set of the isomorphism classes of indecomposable projective and indecomposable injective $\Lambda$-modules, respectively. Then there exist a permutation $\sigma$ on $\{1,\dots,m\} $ and non-negative integers $t_i$ such that $\tau_n^{t_i} I_i\cong P_{\sigma(i)}$, for any $i \in \{1,\dots,m\}$.
	\item[($\rm{b}$)]
 	$\add M=\add\{\tau_n^{-i}(\Lambda)\,|\,i\geq 0\}=\add\{\tau_n^{i}(D\Lambda)\,|\,i\geq 0\}$ 
 is the unique $n$-cluster tilting subcategory of $\modd\Lambda$. 
\end{itemize}
\end{proposition}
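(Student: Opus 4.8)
The plan is to analyse how the $n$-Auslander--Reiten translations act on an arbitrary $n$-cluster tilting subcategory $\mathcal{M}=\add M$ of $\modd\Lambda$, using the derived descriptions $\tau_n\cong\mathrm{H}^0(\nu_n-)$ and $\tau_n^{-}\cong\mathrm{H}^0(\nu_n^{-1}-)$. First I would record the boundary behaviour: any $n$-cluster tilting subcategory contains $\mathrm{proj}\,\Lambda$ and $\mathrm{inj}\,\Lambda$, since projectives and injectives automatically satisfy the $\Ext$-orthogonality in the two defining descriptions of $\mathcal{M}$, while $\tau_nP=D\Ext^n_\Lambda(P,\Lambda)=0$ for projective $P$ and dually $\tau_n^{-}I=0$ for injective $I$. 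The first technical point is to show that for an indecomposable non-projective $X\in\mathcal{M}$ the complex $\nu_nX$ is concentrated in degree $0$, so that $\nu_nX\cong\tau_nX$ in $\md_\Lambda$. Here one computes $\mathrm{H}^j(\nu_nX)\cong D\Ext^{n-j}_\Lambda(X,\Lambda)$, so the cluster-tilting vanishing $\Ext^i_\Lambda(X,\Lambda)=0$ for $0<i<n$ kills the middle cohomologies and the only remaining obstruction is $\mathrm{H}^n(\nu_nX)\cong D\Hom_\Lambda(X,\Lambda)$; its vanishing, i.e.\ $\Hom_\Lambda(X,\Lambda)=0$, is the higher analogue of the classical fact that a non-projective indecomposable over a hereditary algebra admits no nonzero map to a projective. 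Granting this (and its dual), $\tau_n$ and $\tau_n^{-}$ become mutually inverse bijections between the indecomposable non-projectives and the indecomposable non-injectives of $\mathcal{M}$: they preserve $\mathcal{M}$ by $n$-Auslander--Reiten duality \cite[Theorem 2.3.1]{I1} together with the fact that $\nu_n$ translates the defining $\Ext$-vanishing of $\mathcal{M}$ into itself, they preserve indecomposability because $\nu_n^{\pm 1}$ are autoequivalences of $\md_\Lambda$, and they satisfy $\tau_n^{-}\tau_nX\cong X$ and $\tau_n\tau_n^{-}Y\cong Y$ because $\nu_n^{-1}\nu_n\cong\mathrm{id}$ acts on these modules.

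With this bijection, I would prove that the backward $\tau_n^{-}$-orbit of every indecomposable projective terminates at an injective. Suppose, for an indecomposable projective $P$ (a summand of $\Lambda$), that $P,\tau_n^{-}P,(\tau_n^{-})^2P,\dots$ never reaches an injective; then all these objects lie in $\ind\mathcal{M}$ and are modules, so finiteness of $\ind\mathcal{M}$ (where the additive generator $M$ is used) forces $(\tau_n^{-})^aP\cong(\tau_n^{-})^bP$ for some $a<b$, hence $\nu_n^{-k}P\cong P$ in $\md_\Lambda$ with $k=b-a\geq 1$. Since $P$ is a summand of $\Lambda$, the space $\Hom_{\md_\Lambda}(\nu_n^{-k}P,P)$ is a direct summand of $\Hom_{\md_\Lambda}(\nu_n^{-k}\Lambda,\Lambda)$, which vanishes by Proposition \ref{thio0}; yet $\nu_n^{-k}P\cong P$ makes it contain $\End_\Lambda(P)\neq 0$, a contradiction. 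Thus each such orbit is a finite chain from $P$ back to an injective, and dually each injective's forward $\tau_n$-orbit is a finite chain down to a projective. These $m$ chains match the $m$ indecomposable injectives with the $m$ indecomposable projectives, producing a permutation $\sigma$ and integers $t_i$ with $\tau_n^{t_i}I_i\cong P_{\sigma(i)}$, which is part~(a). Reading a chain from its projective end expresses its members as summands of $\tau_n^{-i}(\Lambda)$ and from its injective end as summands of $\tau_n^{i}(D\Lambda)$, so $\add\{\tau_n^{-i}(\Lambda)\mid i\geq 0\}=\add\{\tau_n^{i}(D\Lambda)\mid i\geq 0\}$.

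It remains to identify $\mathcal{P}\coloneqq\add\{\tau_n^{-i}(\Lambda)\mid i\geq 0\}$ with $\add M=\mathcal{M}$, and here lies the principal obstacle. One inclusion is clear: $\Lambda\in\mathcal{M}$ and $\tau_n^{-}$ preserves $\mathcal{M}$, so $\mathcal{P}\subseteq\mathcal{M}$. For the reverse inclusion one must rule out an indecomposable $X\in\ind\mathcal{M}$ whose forward $\tau_n$-orbit never meets a projective, i.e.\ a $\tau_n$-cycle of non-projective, non-injective objects disjoint from the chains above; equivalently, one must show every indecomposable of $\mathcal{M}$ is $n$-preprojective. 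The argument of the previous paragraph does not apply, since a general $X$ need not be a summand of $\Lambda$. I would resolve this by showing that $\mathcal{P}$ is itself $n$-cluster tilting: it is functorially finite (being a finite subcategory), and the self-orthogonality $\Ext^i_\Lambda(\mathcal{P},\mathcal{P})=0$ for $0<i<n$ is inherited from $\mathcal{M}$, so the real content is the maximality $\{Z\mid\Ext^i_\Lambda(Z,\mathcal{P})=0,\ 0<i<n\}\subseteq\mathcal{P}$, which I expect to follow from the $n$-almost split sequences available in $\mathcal{M}$ together with the connectedness forced by $\Lambda$ being indecomposable.

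Once $\mathcal{P}$ is known to be $n$-cluster tilting, the conclusion is formal. Given the inclusion $\mathcal{P}\subseteq\mathcal{M}$, any $X\in\mathcal{M}$ satisfies $\Ext^i_\Lambda(X,\mathcal{P})=0$ for $0<i<n$ (as $\mathcal{P}\subseteq\mathcal{M}$ and $\mathcal{M}$ is $n$-cluster tilting), whence $X\in\mathcal{P}$ by maximality; thus $\mathcal{M}=\mathcal{P}$, which together with the previous paragraph gives part~(b). For uniqueness, note that $\tau_n^{-}$ and the description $\mathcal{P}=\add\{\tau_n^{-i}(\Lambda)\mid i\geq 0\}$ make no reference to the chosen subcategory: for any $n$-cluster tilting subcategory $\mathcal{N}$ of $\modd\Lambda$ one has $\Lambda\in\mathcal{N}$ and $\tau_n^{-}$ preserves $\mathcal{N}$, so $\mathcal{P}\subseteq\mathcal{N}$, and then the maximality of $\mathcal{P}$ forces $\mathcal{N}\subseteq\mathcal{P}$. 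Hence every $n$-cluster tilting subcategory equals $\mathcal{P}=\add M$, which is the uniqueness asserted in part~(b).
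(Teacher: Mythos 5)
The paper does not prove Proposition \ref{per} at all: it imports the statement verbatim from \cite[Proposition 1.3 and Theorem 1.6]{I4}, so the benchmark is Iyama's proof. Your architecture --- $\tau_n$/$\tau_n^-$-orbits, termination of the chains, the permutation $\sigma$, and the reduction of part (b) to showing that $\mathscr{P}=\add\{\tau_n^{-i}\Lambda\,|\,i\geq 0\}$ is itself $n$-cluster tilting, after which uniqueness is formal --- is a sensible reconstruction, and the endgame (any $n$-cluster tilting $\mathcal{N}$ contains $\Lambda$ and is closed under $\tau_n^-$, hence contains $\mathscr{P}$, hence equals it by maximality) is correct \emph{given} the two steps you defer. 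But those two steps are not auxiliary lemmas; they are essentially the whole theorem, and neither is proved. The first is the claim you explicitly ``grant'': $\Hom_\Lambda(X,\Lambda)=0$ for every indecomposable non-projective $X\in\mathcal{M}$, equivalently that $\nu_nX$ is concentrated in degree $0$. For $n=1$ this is the easy hereditary fact you invoke (submodules of projectives are projective), but for $n\geq 2$ no such argument is available, and in the literature this statement is a \emph{consequence} of the proposition you are proving: it is exactly the paper's Proposition \ref{tunu}, quoted from \cite{IO1} and \cite{HIO}, whose proofs rest on \cite[Theorem 1.6]{I4}. Assuming it is therefore close to circular; in particular your contradiction via Proposition \ref{thio0} needs $(\tau_n^-)^kP\cong\nu_n^{-k}P$ in $\md_\Lambda$, which is the granted concentration applied at every step of the orbit. (The chain-termination step itself can be rescued without the grant: by \cite[Theorem 2.3]{I1}, $\tau_n^-Y$ for non-injective $Y\in\ind\mathcal{M}$ has no projective direct summands, so a periodicity $(\tau_n^-)^kP\cong P$ with $k\geq 1$ is already absurd. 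The grant re-enters, however, wherever you identify $\tau_n^{-i}$ with $\nu_n^{-i}$.)

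The second and principal gap is the maximality $\{Z\,|\,\Ext^i_\Lambda(Z,\mathscr{P})=0,\ 0<i<n\}\subseteq\mathscr{P}$, which you defer with ``I expect to follow from the $n$-almost split sequences available in $\mathcal{M}$ together with the connectedness forced by $\Lambda$ being indecomposable.'' This is precisely the crux: one must exclude a $\tau_n$-periodic family of non-projective, non-injective indecomposables in $\mathcal{M}$ disjoint from the projective--injective chains, and for $n\geq 2$ there is no analogue of the classical connectedness argument (components of the AR quiver organized by irreducible maps) that does this; indecomposability of $\Lambda$ enters no known proof in this form. Iyama's actual argument is of a different nature, carried out in $\md_\Lambda$ by an induction showing that the objects $\nu_n^{-i}\Lambda$ remain modules and that $\uu=\add\{\nu_n^i\Lambda\,|\,i\in\mathbb{Z}\}$ controls $\mathcal{M}$, rather than by quiver-combinatorial connectedness; see \cite{I4} and the related development in \cite{HIO}. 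Two smaller points: an $n$-cluster tilting subcategory must coincide with \emph{both} $\Ext$-perpendicular categories, while your maximality discussion addresses only one side; and your matching of injectives to projectives should note explicitly that the chains are pairwise disjoint (which does follow from the bijectivity of $\tau_n^\pm$ on the relevant indecomposables). As it stands, the proposal reduces the proposition to two unproven claims, one of which is in effect the proposition itself, so it cannot be accepted as a proof.
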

Note that $1$-representation finite algebras
are exactly representation finite hereditary algebras. In the classical case $n=1$,  representation infinite hereditary algebras can be considered as a natural counterpart to representation finite ones. Motivated by this fact, Herschend, Iyama and Oppermann introduced $n$-representation infinite and $n$-hereditary algebras \cite{HIO}.
\begin{definition}[\protect{\cite[Definitions 2.7, 3.2]{HIO}}]\label{n-h}
\begin{enumerate}
\item
$\Lambda$ is called \textit{$n$-representation infinite} if $\gd\Lambda\leq n$ and any indecomposable projective $\Lambda$-module $P$ satisfies that $\nu_n^{-i}(P)\in \modd\Lambda$, for any $i\geq 0$.
\item
$\Lambda$ is called \textit{$n$-hereditary} if $\gd\Lambda\leq n$ and  $\nu_n^{i}(\Lambda)\in \md^{n\mathbb{Z}}_\Lambda$ for any $i\in\mathbb{Z}$, where
$$\md^{n\mathbb{Z}}_\Lambda\coloneqq\{X\in \md_\Lambda\,|\,\mathrm{H}^i(X)=0,\,\,\,  i\in\mathbb{Z}\backslash n\mathbb{Z}\}.$$
\end{enumerate}
\end{definition}
In \cite{HIO}, the authors proved a dichotomy of $n$-hereditary algebras based on the classes of $n$-representation finite and $n$-representation infinite algebras.

\begin{theorem}[\protect{\cite[Theorem 3.4]{HIO}}]\label{de}
$\Lambda$ is $n$-hereditary if and only if it is either $n$-representation finite or $n$-representation infinite.
\end{theorem}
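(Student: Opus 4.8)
The plan is to prove the two implications separately, treating the forward direction (either condition forces $n$-hereditary) as essentially bookkeeping and concentrating the effort on the dichotomy itself. Throughout I will use that $\nu_n$ is an autoequivalence of $\md_\Lambda$ commuting with the shift, that $\nu(\Lambda)=D\Lambda$ and $\nu^{-1}(D\Lambda)=\Lambda$ are concentrated in degree $0$, that $\tau_n=\mathrm{H}^0(\nu_n-)$ and $\tau_n^-=\mathrm{H}^0(\nu_n^{-1}-)$, and the Hom-orthogonality of Proposition \ref{thio0}.

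For the forward direction, suppose first that $\Lambda$ is $n$-representation infinite. By Definition \ref{n-h}(1) we have $\nu_n^{-i}(\Lambda)\in\modd\Lambda$ for all $i\ge 0$, which settles $\nu_n^i(\Lambda)$ for $i\le 0$ since $\modd\Lambda\subseteq\md^{n\mathbb{Z}}_\Lambda$. For $i\ge 1$ I would use the identity $\nu_n^i(\Lambda)=\nu_n^{i-1}(\nu_n\Lambda)=\nu_n^{i-1}(D\Lambda)[-n]$ together with the dual statement $\nu_n^j(D\Lambda)\in\modd\Lambda$ for all $j\ge 0$, valid because $\Lambda^{\op}$ is again $n$-representation infinite; this puts the cohomology of $\nu_n^i(\Lambda)$ in degree $n\in n\mathbb{Z}$, so $\Lambda$ is $n$-hereditary. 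If instead $\Lambda$ is $n$-representation finite, I would invoke the orbit description of Proposition \ref{per}: the indecomposable summands of $\nu_n^{-i}(\Lambda)$ run through the finite $n$-cluster tilting module while remaining in degree $0$, and once the injectives are reached the relation $\tau_n^{t_i}I_i\cong P_{\sigma(i)}$ forces the subsequent orbit to consist of shifted modules, so again all cohomology lies in $n\mathbb{Z}$.

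The substance is the converse, which I would prove in contrapositive form: assuming $\Lambda$ is $n$-hereditary but not $n$-representation infinite, I will construct an $n$-cluster tilting object. Failure of $n$-representation infiniteness yields an indecomposable projective $P$ and a least index $i_0\ge 0$ with $\nu_n^{-i_0}(P)\notin\modd\Lambda$. The key technical lemma I need is a \emph{stalk concentration} statement: for an $n$-hereditary algebra every indecomposable summand of $\nu_n^i(\Lambda)$ has the form $M[-nj]$ for a module $M$ and some integer $j$. Granting this, $\tau_n^{-i}=\mathrm{H}^0(\nu_n^{-i}-)$ detects exactly when an orbit object sits in degree $0$, and the first jump at $i_0$ carries $\nu_n^{-(i_0-1)}(P)$, which one identifies as an indecomposable injective, up to a nonzero degree. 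I would then show that the orbit closes up: using Proposition \ref{thio0} to forbid nonzero homomorphisms between distinct orbit stalks in the wrong direction, together with connectedness of $\Lambda$, the set $\{\,\nu_n^{-i}(\Lambda)\in\modd\Lambda : i\ge 0\,\}$ is finite, and its direct sum $M$ satisfies $\add M=\add\{\tau_n^{-i}(\Lambda):i\ge 0\}=\add\{\tau_n^i(D\Lambda):i\ge 0\}$. Finally I would verify the two $\Ext$-vanishing conditions defining an $n$-cluster tilting subcategory directly from Proposition \ref{thio0} and the stalk structure, concluding that $\Lambda$ is $n$-representation finite.

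The main obstacle is the stalk-concentration lemma together with the wrap-around closure. The defining condition of $n$-hereditary only asserts that the cohomology of $\nu_n^i(\Lambda)$ is supported in degrees divisible by $n$; upgrading this to a \emph{single} nonzero cohomology group per indecomposable orbit object, and then proving that once the orbit leaves $\modd\Lambda$ it returns after finitely many steps rather than drifting off to infinity, is where the real work lies. I expect this to require a careful induction on $i$ that simultaneously tracks the surviving cohomological degree and the Hom-vanishing supplied by Proposition \ref{thio0}, the latter being precisely what excludes the intermediate behaviour of an orbit that is neither eventually all-modules (the $n$-representation infinite case) nor finite (the $n$-representation finite case).
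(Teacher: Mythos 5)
The paper does not prove Theorem \ref{de} at all: it is quoted verbatim from \cite[Theorem 3.4]{HIO}, so the benchmark is the proof there. Your skeleton (the $\nu_n$-orbit of $\Lambda$, decomposition of objects of $\md^{n\mathbb{Z}}_\Lambda$ into shifted module stalks, Hom-orthogonality via Proposition \ref{thio0}) does match the architecture of that proof; note in passing that your ``stalk concentration lemma'' is not the hard part you fear -- it is a one-line truncation-triangle argument, since the gluing obstruction between cohomologies in degrees differing by $n$ lies in $\Ext^{n+1}_\Lambda(-,-)=0$ because $\gd\Lambda\leq n$. The genuine gap is the step you describe as ``which one identifies as an indecomposable injective.'' Writing the first jump as $\nu_n^{-i_0}(P)=N[n]$ with $N\in\modd\Lambda$, everything you can extract locally is $\nu_n^{-(i_0-1)}(P)\cong\nu(N)$ with $\Ext^j_\Lambda(N,\Lambda)=0$ for $j>0$; but $\nu(N)$ is injective if and only if $N$ is projective, and $\Ext$-vanishing against $\Lambda$ together with biduality is strictly weaker than projectivity. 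So injectivity of the pre-jump term cannot be read off stalk by stalk, and you give no mechanism by which Proposition \ref{thio0} plus connectedness forces the orbit to close up rather than run through an infinite concatenation of $\tau_n^-$-strings joined by non-injective seams $E\cong\nu(F)$. You concede this yourself (``where the real work lies''), which makes the proposal an outline of the routine part wrapped around an unproved core -- and that core is the theorem.

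There is a second gap at the end: your plan to ``verify the two $\Ext$-vanishing conditions defining an $n$-cluster tilting subcategory directly'' conflates $n$-rigidity with $n$-cluster tilting. Proposition \ref{thio0} and the stalk structure do give $\Ext^i_\Lambda(M,M)=0$ for $0<i<n$, but the defining equalities also demand maximality (every $X$ with $\Ext^i_\Lambda(X,\add M)=0$ for $0<i<n$ lies in $\add M$), which no Hom-vanishing between orbit stalks can deliver. In \cite{HIO} both the wrap-around and maximality are resolved at once by Iyama's criterion (\cite[Theorem 1.6]{I4}, the source of Proposition \ref{per}): one shows $D\Lambda\in\add\{\tau_n^{-i}(\Lambda)\,|\,i\geq0\}$ and Iyama's theorem then yields that this subcategory is the $n$-cluster tilting subcategory; that is the missing idea in your plan. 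Two smaller debts in your forward direction: the claim that $\Lambda^{\op}$ is again $n$-representation infinite is true but is itself a lemma, proved by the Serre-duality computation $\mathrm{H}^j(\nu_n^{i}(D\Lambda))\cong D\mathrm{H}^{-j}(\nu_n^{-i}(\Lambda))$; and the $n$-representation finite case needs $\nu_n^{-1}$ to agree with $\tau_n^-$ on non-injectives in the orbit, which in this paper is Proposition \ref{tunu} -- stated for $n$-hereditary algebras, so invoking it while proving $n$-hereditariness is circular, and you must instead cite the results of \cite{I4} for $n$-representation finite algebras directly.
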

Let $\Lambda$ be an $n$-hereditary algebra. Full subcategories
\begin{equation}
\uu\coloneqq \add\{\nu_n^i(\Lambda)\,|\,i\in\mathbb{Z}\}\notag
\end{equation}
and
\begin{align*}
\mcc \coloneqq &\{X\in \md_\Lambda\,|\,\Hom_{\md_\Lambda}(\uu,X[i])=0,\,\,\,  i\in\mathbb{Z}\backslash n\mathbb{Z}\}\\
=&\{X\in \md_\Lambda\,|\,\Hom_{\md_\Lambda}(X,\uu[i])=0,\,\,\,  i\in\mathbb{Z}\backslash n\mathbb{Z}\}
\end{align*}
of $\md_\Lambda$ and the full subcategory
\begin{equation}
\mcc ^0\coloneqq (\modd\Lambda)\cap\mcc\notag
\end{equation}
of $\modd\Lambda$ play an important role in the higher dimensional Auslander--Reiten theory (see \cite[Definition 1.22]{I4} and \cite[Section 4]{HIO}). It is obvious that for $n=1$, $\mcc$ and $\mcc^0$ are exactly $\md_\Lambda$ and $\modd\Lambda$, respectively. 

We also recall important full subcategories
\begin{align*}
\mathscr{P}&\coloneqq \add\{\tau_n^{-i}(\Lambda)\,|\,i\geq 0\},\\
\mathscr{I}&\coloneqq \add\{\tau_n^{i}(D\Lambda)\,|\,i\geq 0\},
\end{align*}
of $\modd\Lambda$, which have already appeared in the study of $n$-hereditary algebras (for example, see \cite{HIO,I4,IO,IO1}). Modules in $\mathscr{P}$ and $\mathscr{I}$ are called \textit{$n$-preprojective} and \textit{$n$-preinjective}  $\Lambda$-modules, respectively (see \cite[Definition 4.7]{HIO}).
Note that one can be considered the subcategories $\mathscr{P}$ and $\mathscr{I}$  as higher dimensional analogues of the classical preprojective and preinjective components of hereditary algebras, respectively.
\begin{theorem}[\protect{\cite[Theorems 1.23 and 1.6]{I4} and \cite[Remark 4.6]{HIO}}]\label{th1}
	Let $\Lambda$ be an $n$-representation finite algebra. Then $\uu$ is an $n$-cluster tilting subcategory of $\md_\Lambda$. Moreover, $\mcc=\uu$ and $\mcc ^0=\mathscr{P}=\mathscr{I}$ is the unique $n$-cluster tilting subcategory of $\modd\Lambda$.
\end{theorem}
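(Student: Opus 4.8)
The plan is to split the statement into its module-category part and its derived-category part, and to treat the latter by using that $\md_\Lambda$ carries a Serre functor whose shift $\nu_n=\nu[-n]$ is the autoequivalence appearing in the definition of $\uu$. The module-category assertions come essentially for free: by Proposition \ref{per}(b) the subcategory $\add M=\add\{\tau_n^{-i}(\Lambda)\mid i\ge 0\}=\add\{\tau_n^{i}(D\Lambda)\mid i\ge 0\}$ is the \emph{unique} $n$-cluster tilting subcategory of $\modd\Lambda$, and since $\mathscr{P}=\add\{\tau_n^{-i}(\Lambda)\}$ and $\mathscr{I}=\add\{\tau_n^{i}(D\Lambda)\}$ by definition, this yields $\mathscr{P}=\mathscr{I}=\add M$ together with the uniqueness statement at once. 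It thus remains to prove that $\uu$ is $n$-cluster tilting in $\md_\Lambda$, that $\mcc=\uu$, and that $\mcc^0=\mathscr{P}$.

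The organising device for the derived part is an explicit description of $\uu$ in terms of the $n$-cluster tilting module $M$, namely $\uu=\add\{M[jn]\mid j\in\mathbb{Z}\}$. To establish it I would observe that every indecomposable summand of $\nu_n^{i}(\Lambda)$ is $\nu_n^{i}(P_j)$ for an indecomposable projective $P_j$, that on $\add M$ the autoequivalence $\nu_n$ lifts the module-level translation $\tau_n$ (so that $\nu_n(M')\cong\tau_n(M')$ in $\modd\Lambda$ for each non-injective indecomposable summand $M'$ of $M$), while $\nu_n(P_k)\cong I_k[-n]$. Tracing the $\nu_n$-orbit of each $P_j$ with the help of Proposition \ref{per}(a) --- the permutation $\sigma$ and integers $t_i$ with $\tau_n^{t_i}I_i\cong P_{\sigma(i)}$ --- the orbit runs through the indecomposable summands of $M$ in cohomological degree $0$ and acquires one shift by $[-n]$ each time it completes a full $\sigma$-cycle; hence $\{\nu_n^{i}(P_j)\mid i\in\mathbb{Z}\}$ is exactly $\{M_\ell[jn]\}$, which gives the claimed description of $\uu$.

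For the $n$-cluster tilting property of $\uu$ the rigidity is immediate: since $\nu_n$ is an autoequivalence, $\Hom_{\md_\Lambda}(\nu_n^{i}\Lambda,\nu_n^{j}\Lambda[k])\cong\Hom_{\md_\Lambda}(\Lambda,\nu_n^{j-i}\Lambda[k])=\mathrm{H}^{k}(\nu_n^{j-i}\Lambda)$, which vanishes for every $k\notin n\mathbb{Z}$ by the $n$-hereditary hypothesis (Definition \ref{n-h}), in particular for $0<k<n$; Proposition \ref{thio0} controls the degrees $k\in n\mathbb{Z}$ that enter the finer orbit analysis. The same computation gives $\Hom_{\md_\Lambda}(\uu,X[k])=0$ for all $k\notin n\mathbb{Z}$ when $X\in\uu$, i.e.\ $\uu\subseteq\mcc$. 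The remaining, and in my view principal, difficulty is the recovery condition $\uu=\{X\mid\Hom_{\md_\Lambda}(\uu,X[k])=0,\ 0<k<n\}$ together with the functorial finiteness of $\uu$ in $\md_\Lambda$: this is exactly where the ambient triangulated structure, rather than mere rigidity, is needed, and I would obtain it from the Iyama--Yoshino theory of $n$-cluster tilting in triangulated categories with Serre functor, feeding in the finiteness of $M$ and the orbit description above. Granting this, $\mcc=\uu$ is formal: the defining vanishing of $\mcc$, imposed for all $k\notin n\mathbb{Z}$, is stronger than the recovery condition imposed only for $0<k<n$, so $\mcc\subseteq\uu$, and the reverse inclusion was shown above.

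Finally, for $\mcc^0=\mathscr{P}$ I would intersect the orbit description with the module category: $\mcc^0=\mcc\cap\modd\Lambda=\uu\cap\modd\Lambda$, and an indecomposable $M_\ell[jn]$ lies in $\modd\Lambda$ precisely when it is a stalk complex concentrated in degree $0$, that is when $j=0$. Therefore $\mcc^0=\add M=\mathscr{P}=\mathscr{I}$, which completes the proof.
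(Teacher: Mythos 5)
The paper does not actually prove Theorem \ref{th1}: it is quoted verbatim from \cite[Theorems 1.6 and 1.23]{I4} and \cite[Remark 4.6]{HIO}, so your proposal can only be measured against those sources. Much of what you do is sound and matches how the paper organises the material: the module-level assertions ($\mathscr{P}=\mathscr{I}=\add M$ and uniqueness) do follow at once from Proposition \ref{per}(b), which is itself the cited Theorem 1.6; the rigidity of $\uu$ from Definition \ref{n-h}; the formal deduction $\mcc=\uu$ (recovery gives $\mcc\subseteq\uu$, rigidity gives $\uu\subseteq\mcc$); and $\mcc^0=\uu\cap\modd\Lambda=\add M$ from the orbit description $\uu=\add\{M[jn]\mid j\in\mathbb{Z}\}$, which is correct.

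The genuine gap is exactly the step you flag as the principal difficulty. You defer the recovery condition and functorial finiteness of $\uu$ --- i.e.\ the statement that $\uu$ is $n$-cluster tilting in $\md_\Lambda$ --- to ``Iyama--Yoshino theory of $n$-cluster tilting in triangulated categories with Serre functor,'' but no such off-the-shelf result exists: Iyama--Yoshino develops mutation of cluster tilting subcategories, it does not produce them from a rigid $\nu_n$-stable subcategory. The statement you need is precisely \cite[Theorem 1.23]{I4}, i.e.\ the very theorem under discussion, so at its core your argument is either circular or incomplete. The gap is fillable by honest work along the following lines: if $\Hom_{\md_\Lambda}(\uu,X[i])=0$ for $0<i<n$, then testing against $\Lambda[jn]\in\uu$ gives $\mathrm{H}^k(X)=0$ for all $k\notin n\mathbb{Z}$; since $\gd\Lambda\leq n$, consecutive nonvanishing cohomologies of $X$ sit at least $n$ apart, the connecting morphisms of the truncation triangles lie in $\Ext$-groups of degree at least $n+1$ and hence vanish, so $X\cong\bigoplus_j\mathrm{H}^{jn}(X)[-jn]$; the hypothesis then says each $\mathrm{H}^{jn}(X)$ satisfies $\Ext^i_\Lambda(M,\mathrm{H}^{jn}(X))=0$ for $0<i<n$, hence lies in $\add M$ by the module-level recovery of Proposition \ref{per}(b), giving $X\in\add\{M[jn]\}=\uu$. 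Functorial finiteness is then easy, since for fixed $X$ only finitely many shifts $M[jn]$ admit nonzero maps to or from $X$. Two smaller points: in your orbit analysis the shift $[-n]$ is acquired at \emph{each} passage $P\mapsto\nu_n(P)\cong I[-n]$, not once per full $\sigma$-cycle (a cycle of length $\ell$ contributes $[-\ell n]$), so obtaining every $M_i[jn]$ requires the union over all projectives in the cycle --- which your formulation does take, so the conclusion survives; and the triangulated definition of $n$-cluster tilting has two recovery conditions, of which you check only one --- here $\nu\uu=\uu$ and Serre duality make them equivalent, but this deserves a sentence.
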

In $n$-representation infinite case, by the definition, we have
\begin{align*}
\mathscr{P}&\coloneqq\add\{\tau_n^{-i}(\Lambda)\,|\,i\geq 0\}= \add\{\nu_n^{-i}(\Lambda)\,|\,i\geq 0\},\\
\mathscr{I}&\coloneqq\add\{\tau_n^{i}(D\Lambda)\,|\,i\geq 0\}= \add\{\nu_n^{i}(D\Lambda)\,|\,i\geq 0\}.
\end{align*}
Moreover, in this case, the modules in
the subcategory 
\begin{align*}
\mathscr{R}&\coloneqq \{X\in \modd\Lambda\,|\,\Ext_{\Lambda}^i(\mathscr{P},X)=0=\Ext_{\Lambda}^i(X,\mathscr{I}),\,\,\, i>0\}
\end{align*}
of $\modd\Lambda$ is called \textit{$n$-regular} (see \cite[Definition 4.14]{HIO}). For the case $n=1$, $\mathscr{R}$ is exactly the class of regular modules for representation infinite hereditary algebras. We recall some key properties of $n$-representation infinite algebras.
\begin{proposition}[\protect{\cite[Proposition 4.10 and Theorem 4.18]{HIO}}]\label{Thio}
Let $\Lambda$ be an $n$-representation infinite algebra. Then the following hold.
\begin{itemize}
\item[(a)]
There is a bijection $\ind(\mathrm{proj}\,\Lambda)\times \mathbb{Z}_{\geq0}\to \ind\mathscr{P}$ given by $(P,i)\mapsto \nu_n^{-i}(P)$.
\item[(b)]
There is a bijection $\ind(\mathrm{inj}\,\Lambda)\times \mathbb{Z}_{\geq0}\to \ind\mathscr{I}$ given by $(I,i)\mapsto \nu_n^{i}(I)$.
\item[(c)]
$\uu=\mathscr{I}[-n]\vee\mathscr{P}$.
\item[(d)]
$\Hom_\Lambda(\mathscr{I},\mathscr{P})=0$ and $\mathscr{P}\cap\mathscr{I}=0$.
\item[(e)]
$\Hom_\Lambda(\mathscr{R},\mathscr{P})=0$ and $\Hom_\Lambda(\mathscr{I},\mathscr{R})=0$.
\item[(f)]
$\mcc^0=\mathscr{P}\vee\mathscr{R}\vee\mathscr{I}$.
\end{itemize}
\end{proposition}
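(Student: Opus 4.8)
The plan is to establish the six items in order, using two standing inputs for an $n$-representation infinite $\Lambda$: the complexes $\nu_n^{-i}(\Lambda)$ and $\nu_n^{i}(D\Lambda)$ are concentrated in degree $0$ for all $i\ge 0$ (the first by definition, the second by applying this to $\Lambda^{\mathrm{op}}$), and $\nu_n$ is an autoequivalence of $\md_\Lambda$; the engine for the independence statements is Proposition \ref{thio0}. For (a), since $\nu_n^{-i}$ is an autoequivalence and $\nu_n^{-i}(\Lambda)=\bigoplus_j\nu_n^{-i}(P_j)$ is a module, each $\nu_n^{-i}(P_j)$ is an indecomposable module and these exhaust $\ind\mathscr{P}$, giving surjectivity. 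For injectivity, an isomorphism $\nu_n^{-i}(P)\cong\nu_n^{-i'}(P')$ with $i<i'$ produces, by composing with the projection and inclusion of the relevant summands, a nonzero element of $\Hom_{\md_\Lambda}(\nu_n^{-i'}(\Lambda),\nu_n^{-i}(\Lambda))$; as $-i'<-i$ this contradicts Proposition \ref{thio0}, so $i=i'$ and $P\cong P'$ after applying $\nu_n^{i}$. Part (b) is obtained from (a) applied to $\Lambda^{\mathrm{op}}$ via the duality $D$, under which $\mathscr{P}$ and $\mathscr{I}$ are interchanged.

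Items (c) and (d) both rest on the identity $D\Lambda\cong\nu_n(\Lambda)[n]$, which is $\nu=\nu_n\circ[n]$ evaluated at $\Lambda$. Hence $\nu_n^{i}(D\Lambda)\cong\nu_n^{i+1}(\Lambda)[n]$, so $\mathscr{I}[-n]=\add\{\nu_n^{k}(\Lambda)\mid k\ge 1\}$ while $\mathscr{P}=\add\{\nu_n^{k}(\Lambda)\mid k\le 0\}$; their join is $\add\{\nu_n^{k}(\Lambda)\mid k\in\mathbb{Z}\}=\uu$, which is (c). For (d), indecomposables of $\mathscr{I}$ and $\mathscr{P}$ are summands of $\nu_n^{i+1}(\Lambda)[n]$ and $\nu_n^{-j}(\Lambda)$ respectively; applying the autoequivalence $\nu_n^{-(i+1)}$ and the shift adjunction reduces $\Hom_\Lambda(\mathscr{I},\mathscr{P})$ to $\Hom_{\md_\Lambda}(\Lambda,\nu_n^{-i-j-1}(\Lambda)[-n])=H^{-n}(\nu_n^{-i-j-1}(\Lambda))$, which vanishes because $\nu_n^{-i-j-1}(\Lambda)$ is a module and $n\ge 1$. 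Then $\mathscr{P}\cap\mathscr{I}=0$ is immediate, since the identity of a common indecomposable would lie in $\Hom_\Lambda(\mathscr{I},\mathscr{P})=0$.

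The substance is (f), whose proof simultaneously yields the orbit picture needed for (e). For an indecomposable $X\in\mcc^0$ I would analyse the module locus $J(X)=\{i\in\mathbb{Z}\mid\nu_n^{i}(X)\in\modd\Lambda\}$, which contains $0$. The key lemma is that for indecomposable $M\in\mcc^0$ one has $\nu_n(M)\in\modd\Lambda$ (and then $\nu_n(M)\cong\tau_n(M)$) exactly when $M$ is not projective, and dually $\nu_n^{-1}(M)\in\modd\Lambda$ exactly when $M$ is not injective: membership in $\mcc$ together with $\gd\Lambda\le n$ confines the cohomology of $\nu_n(M)$ to degrees $0$ and $n$, with $H^{n}(\nu_n(M))\cong D\Hom_\Lambda(M,\Lambda)$, so $\nu_n(M)$ is a module precisely when $\Hom_\Lambda(M,\Lambda)=0$, which for indecomposable $M\in\mcc^0$ holds iff $M$ is non-projective. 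A degree count then shows $J(X)$ is an interval: once the orbit leaves $\modd\Lambda$ upward through a projective (resp. downward through an injective) its cohomology sits in strictly positive (resp. negative) degree and never returns to $0$. Since $0\in J(X)$, we have $\max J(X)\ge 0\ge\min J(X)$; if $\max J(X)$ is finite then $\nu_n^{\max J(X)}(X)$ is projective and $X\in\mathscr{P}$, if $\min J(X)$ is finite then $\nu_n^{\min J(X)}(X)$ is injective and $X\in\mathscr{I}$, and if $J(X)=\mathbb{Z}$ then no shift of $X$ is projective or injective and $X\in\mathscr{R}$ by the defining Ext-vanishing; these three cases are mutually exclusive.

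Granting this orbit description, (e) is formal Serre duality ($\nu$ being the Serre functor of $\md_\Lambda$): for $R\in\mathscr{R}$ and $\nu_n^{-j}(P)\in\mathscr{P}$, $\Hom_\Lambda(R,\nu_n^{-j}(P))\cong\Hom_{\md_\Lambda}(\nu_n^{j}(R),P)\cong D\Hom_{\md_\Lambda}(P,\nu_n^{j+1}(R)[n])=D\Ext^{n}_\Lambda(P,\nu_n^{j+1}(R))=0$, the last equality because $\nu_n^{j+1}(R)$ is a module and $P$ is projective with $n\ge 1$; the vanishing $\Hom_\Lambda(\mathscr{I},\mathscr{R})=0$ is the $\Lambda^{\mathrm{op}}$-dual. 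The main obstacle is precisely the key lemma of the previous paragraph and the ensuing interval/endpoint analysis inside $\mcc^0$, which is the genuine structural content (the trichotomy of $\mcc^0$ itself); parts (a)--(e) are comparatively formal once this core is in hand.
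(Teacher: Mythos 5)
First, a point of comparison: the paper does not prove this proposition at all --- it is quoted verbatim from \cite[Proposition 4.10 and Theorem 4.18]{HIO}, so there is no internal proof to measure your argument against. Judged on its own terms, your treatment of (a)--(e) is essentially correct: (a) and (b) follow from Proposition \ref{thio0} exactly as you say, (c) and (d) from the identity $\nu_n^{i}(D\Lambda)\cong\nu_n^{i+1}(\Lambda)[n]$, and the Serre-duality computation in (e) works once one knows $\nu_n^{j+1}(R)\in\modd\Lambda$ for $R\in\mathscr{R}$, which does follow directly from the Ext-vanishing conditions defining $\mathscr{R}$.

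The genuine gap is in the ``key lemma'' underlying (f), which you yourself flag as the main obstacle but do not actually prove. Your reduction is right: for indecomposable $M\in\mcc^0$ the cohomology of $\nu_n(M)$ is confined to degrees $0$ and $n$, with $H^{n}(\nu_n(M))\cong D\Hom_\Lambda(M,\Lambda)$, so everything hinges on the claim that $\Hom_\Lambda(M,\Lambda)=0$ whenever $M$ is indecomposable, non-projective, and in $\mcc^0$. You assert this ``iff'' with no argument, and it is not a formality: over a general algebra of finite global dimension a non-projective indecomposable can admit nonzero maps to $\Lambda$, and membership in $\mcc$ gives no control over $\Hom_{\md_\Lambda}(M,\uu[i])$ in degree $i=0$. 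The claim is precisely the hard half of Proposition \ref{tunu} ($=$ \cite[Proposition 4.21]{HIO}), which in \cite{HIO} is deduced \emph{from} Theorem 4.18, i.e.\ from the very trichotomy you are trying to establish, so as written your route is close to circular. The gap is fillable: since $\gd\Lambda\leq n$, the connecting map of the truncation triangle $\tau_n(M)\to\nu_n(M)\to D\Hom_\Lambda(M,\Lambda)[-n]\to\tau_n(M)[1]$ lies in an $\Ext^{n+1}$-group and hence vanishes, so the triangle splits and indecomposability of $\nu_n(M)$ forces one of the two pieces to be zero; if $\tau_n(M)=0$ then $\boldsymbol{\mathrm{R}}\Hom_\Lambda(M,\Lambda)$ is concentrated in degree $0$, i.e.\ $\Ext^{i}_\Lambda(M,\Lambda)=0$ for all $i>0$, which over an algebra of finite global dimension forces $M$ to be projective. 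Supplying this (or an equivalent) argument, together with a word on why $\mathscr{R}\subseteq\mcc^0$ (needed for the inclusion $\supseteq$ in (f), and again a consequence of $\nu_n^{k}(R)\in\modd\Lambda$ for all $k\in\mathbb{Z}$), would close the proof.
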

Assume that $\mcc^0_P$ and $\mcc^0_I$ are the full subcategories of $\mcc^0$ consisting of modules without non-zero projective and injective direct summands, respectively. Set $\mathscr{P}_P\coloneqq\mathscr{P}\cap \mcc^0_P$ and $\mathscr{I}_I\coloneqq\mathscr{I}\cap \mcc^0_I$, then we have the following fact.
 \begin{proposition}[\protect{\cite[Proposition 2.20]{IO1} and \cite[Proposition 4.21]{HIO}}]\label{tunu}
Let $\Lambda$ be an $n$-hereditary algebra. There exist mutually quasi-inverse equivalences
 $$\xymatrix{
\mcc^0_P \ar@<.7ex>[rr]^{\nu_n=\tau_n} &&\mcc^0_I. \ar@<.7ex>[ll]^{\nu_n^{-1}=\tau_n^-}}$$
Moreover, the above equivalences restrict to the following equivalences for the  $n$-representation infinite case.
\begin{align*}
\xymatrix{
\mathscr{P}_P \ar@<.7ex>[rr]^{\nu_n=\tau_n} &&\mathscr{P} \ar@<.7ex>[ll]^{\nu_n^{-1}=\tau_n^-}},\quad\quad\,\,\
\xymatrix{
\mathscr{R} \ar@<.7ex>[rr]^{\nu_n=\tau_n} &&\mathscr{R} \ar@<.7ex>[ll]^{\nu_n^{-1}=\tau_n^-}},\,\,\quad\quad
\xymatrix{\mathscr{I} \ar@<.7ex>[rr]^{\nu_n=\tau_n} &&\mathscr{I}_I. \ar@<.7ex>[ll]^{\nu_n^{-1}=\tau_n^-}}
\end{align*}
\end{proposition}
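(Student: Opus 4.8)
The plan is to realise the two functors as restrictions of the mutually quasi-inverse autoequivalences $\nu_n$ and $\nu_n^{-1}$ of $\md_\Lambda$, and to control the cohomology of their values on $\mcc^0$. First I would record that $\nu_n$ preserves $\mcc$: since $\uu=\add\{\nu_n^i(\Lambda)\mid i\in\mathbb{Z}\}$ is stable under $\nu_n$, the defining $\Hom$-vanishing conditions of $\mcc$ are transported to themselves, so $\nu_n(\mcc)=\mcc=\nu_n^{-1}(\mcc)$. Moreover, because $\Lambda\in\uu$, testing the defining condition of $\mcc$ against $\Lambda$ gives $\mathrm{H}^i(X)=0$ for $i\notin n\mathbb{Z}$ and $X\in\mcc$; hence $\mcc\subseteq\md^{n\mathbb{Z}}_\Lambda$, and in particular $\nu_n X\in\mcc$ has cohomology concentrated in degrees divisible by $n$ for every $X\in\mcc^0$.

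Next I would compute these cohomologies. For any $X\in\modd\Lambda$ (recall $\gd\Lambda\le n$) the object $\nu_n X=D\boldsymbol{\mathrm{R}}\Hom_\Lambda(X,\Lambda)[-n]$ satisfies $\mathrm{H}^{n-i}(\nu_n X)\cong D\Ext^i_\Lambda(X,\Lambda)$ for $0\le i\le n$, so it is supported in degrees $[0,n]$. Intersecting with the previous paragraph, for $X\in\mcc^0$ only degrees $0$ and $n$ survive and
$$\mathrm{H}^0(\nu_n X)\cong\tau_n X,\qquad\mathrm{H}^n(\nu_n X)\cong D\Hom_\Lambda(X,\Lambda).$$
Thus the assertion ``$\nu_n=\tau_n$ on $\mcc^0_P$'' is exactly the vanishing $\Hom_\Lambda(X,\Lambda)=0$ for $X\in\mcc^0$ without nonzero projective summand; conversely, for a projective $P$ one has $\nu_n P\cong(D\Hom_\Lambda(P,\Lambda))[-n]$ concentrated in degree $n$, which is precisely the source of a nonzero top cohomology. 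So everything reduces to the crux: for $X\in\mcc^0$ indecomposable, $\Hom_\Lambda(X,\Lambda)\ne0$ if and only if $X$ is projective.

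For the crux I would invoke the dichotomy of Theorem \ref{de}. In the $n$-representation infinite case, Proposition \ref{Thio}(f) gives $\mcc^0=\mathscr{P}\vee\mathscr{R}\vee\mathscr{I}$, so it suffices to treat the three summand types of a non-projective indecomposable: on $\mathscr{R}$ and $\mathscr{I}$ one has $\Hom_\Lambda(X,\Lambda)=0$ from $\Hom_\Lambda(\mathscr{R},\mathscr{P})=0=\Hom_\Lambda(\mathscr{I},\mathscr{P})$ (Proposition \ref{Thio}(d),(e)) together with $\Lambda\in\mathscr{P}$, while a non-projective indecomposable of $\mathscr{P}$ has the form $\nu_n^{-i}(P)$ with $i\ge1$, and $\Hom_\Lambda(\nu_n^{-i}(P),\Lambda)=0$ follows from Proposition \ref{thio0} (with exponents $-i<0$). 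In the $n$-representation finite case, Theorem \ref{th1} identifies $\mcc^0=\mathscr{P}=\mathscr{I}$ with the unique $n$-cluster tilting subcategory, and the same vanishing is the shadow of Iyama's structural fact, underlying Proposition \ref{per}, that $\tau_n$ restricts to a bijection from the non-projective to the non-injective indecomposables of this subcategory. Granting the crux, I would finish by derived-category bookkeeping: for $X\in\mcc^0_P$ we get $\nu_n X=\tau_n X\in\modd\Lambda$, and applying $\nu_n^{-1}\nu_n\cong\mathrm{id}$ gives $\nu_n^{-1}(\tau_n X)\cong X$ concentrated in degree $0$; since $\nu_n^{-1}$ sends an injective into degree $-n$, this forces $\tau_n X\in\mcc^0_I$ and $\tau_n^-\tau_n X\cong X$. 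The dual computation for $\nu_n^{-1}=\tau_n^-$ on $\mcc^0_I$ yields $\tau_n\tau_n^-Y\cong Y$, so the two functors are mutually quasi-inverse equivalences $\mcc^0_P\rightleftarrows\mcc^0_I$. For the ``moreover'' part I would read the restricted equivalences off the parametrisations of Proposition \ref{Thio}(a),(b): $\nu_n$ sends $\nu_n^{-i}(P)$ $(i\ge1)$ to $\nu_n^{-(i-1)}(P)$ and $\nu_n^{j}(I)$ $(j\ge0)$ to $\nu_n^{j+1}(I)$, giving $\mathscr{P}_P\rightleftarrows\mathscr{P}$ and $\mathscr{I}\rightleftarrows\mathscr{I}_I$; the regular case $\mathscr{R}\rightleftarrows\mathscr{R}$ follows because the $\Ext$-vanishing conditions defining $\mathscr{R}$ are transported to themselves under $\nu_n$ (using $\nu_n^{-1}\mathscr{P}\subseteq\mathscr{P}$, $\nu_n\mathscr{I}\subseteq\mathscr{I}$, and $\gd\Lambda\le n$ for the boundary $\Ext$-terms).

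I expect the \textbf{main obstacle} to be the crux $\Hom$-vanishing in the $n$-representation finite case: unlike the infinite case there is no preprojective--regular--preinjective trichotomy to exploit, so the vanishing must be extracted from the internal $n$-almost-split structure of the $n$-cluster tilting subcategory rather than from Propositions \ref{thio0} and \ref{Thio}. A secondary subtlety is checking that $\nu_n$ keeps the regular part $\mathscr{R}$ both inside degree $0$ and inside $\mathscr{R}$, which again hinges on the degree-$n$ cohomology $D\Hom_\Lambda(\mathscr{R},\Lambda)$ being killed by $\Hom_\Lambda(\mathscr{R},\mathscr{P})=0$.
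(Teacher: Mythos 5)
The paper does not prove Proposition \ref{tunu}: it is imported verbatim from \cite[Proposition 2.20]{IO1} and \cite[Proposition 4.21]{HIO}, so there is no internal argument to compare yours against. Your reconstruction follows the standard line of those references and is essentially sound: the reduction to the two-term cohomology $\mathrm{H}^0(\nu_nX)\cong\tau_nX$, $\mathrm{H}^n(\nu_nX)\cong D\Hom_\Lambda(X,\Lambda)$ via $\mcc\subseteq\md^{n\mathbb{Z}}_\Lambda$, the identification of the crux as the vanishing of $\Hom_\Lambda(X,\Lambda)$ for indecomposable non-projective $X\in\mcc^0$, the degree argument showing $\tau_nX$ has no injective summands, and the bookkeeping for the three restricted equivalences are all correct and are exactly how the cited proofs proceed. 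The one soft spot is the one you flag yourself: in the $n$-representation finite case you appeal to ``Iyama's structural fact underlying Proposition \ref{per}'', but as stated in this paper Proposition \ref{per} only records $\add M=\add\{\tau_n^{-i}(\Lambda)\}$ and the permutation $\tau_n^{t_i}I_i\cong P_{\sigma(i)}$; to run your $\Hom$-vanishing through Proposition \ref{thio0} you additionally need the identification $\tau_n^{-i}(\Lambda)\cong\nu_n^{-i}(\Lambda)$ in $\md_\Lambda$, which is \cite[Theorem 1.6]{I4} and is itself a sibling of the statement being proved. Since the paper treats the whole proposition as a quoted black box, invoking that piece of \cite{I4} is legitimate, but you should cite it explicitly rather than gesture at it; with that reference supplied the argument is complete.
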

Let $\Lambda$ be an $n$-representation infinite algebra. We recall that in the classical case $n=1$, the subcategory $\mathscr{R}$ is non-zero (for example see \cite[Proposition $\rm{VIII}.2.9$]{ASS}). For the case $n\geq 2$, this remains a conjecture as posed by Herschend et al. 
\begin{conjecture}[\protect{\cite[Conjecture 6.9]{HIO}}]\label{r0}
$\mathscr{R}$ is non-zero for any $n$-representation infinite algebra.
\end{conjecture}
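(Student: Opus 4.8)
The plan is to argue by contradiction: suppose $\mathscr{R}=0$ for some $n$-representation infinite algebra $\Lambda$. By Proposition~\ref{Thio}(f) this forces $\mcc^0=\mathscr{P}\vee\mathscr{I}$, and since $\mathscr{P}\cap\mathscr{I}=0$ by Proposition~\ref{Thio}(d), the indecomposables split as $\ind\mcc^0=\ind\mathscr{P}\sqcup\ind\mathscr{I}$. By the bijections of Proposition~\ref{Thio}(a),(b) this is a countable, explicitly parametrized family, organized into $m$ ``$\tau_n$-rays'' on each side. The goal is to contradict this by exhibiting a single indecomposable object of $\mcc^0$ that lies in neither $\mathscr{P}$ nor $\mathscr{I}$; by Proposition~\ref{Thio}(f) such an object is regular, giving $\mathscr{R}\neq 0$.

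The device I would use to detect such an object is the dimension vector together with the Coxeter transformation. By the formulas of Section~\ref{s4}, the $n$-preprojective and $n$-preinjective dimension vectors are precisely the forward $\Phi$-orbits $\underline{\dim}\,\tau_n^{-i}(P_j)=\Phi^{-i}\underline{\dim}P_j$ and $\underline{\dim}\,\tau_n^{i}(I_j)=\Phi^{i}\underline{\dim}I_j$ for $i\geq 0$. For odd $n$, Theorem~\ref{thm:mainC} identifies $n$-representation infiniteness with $\Phi$ having infinite order, and Theorem~\ref{thm:mainB} tells us that along each preprojective (resp. preinjective) ray the dimension vectors are pairwise distinct; being an infinite subset of a lattice, they are therefore of unbounded norm. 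I would then analyse the real spectrum of $\Phi$ and split into two cases mirroring the classical hereditary trichotomy: an \emph{affine-type} case producing a positive vector $\delta$ with $\Phi\delta=\delta$ and $\chi(\delta)=0$, and a \emph{wild-type} case producing extremal real eigenvalues $\rho>1>\rho^{-1}$ with associated eigendirections $y^{\pm}$.

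In the affine-type case, any indecomposable $X\in\mcc^0$ with $\underline{\dim}X=\delta$ satisfies $\underline{\dim}\,\tau_n^{i}X=\Phi^{i}\delta=\delta$ for all $i$, so its dimension vector is bounded along its whole $\tau_n$-orbit; since every preprojective and preinjective ray has unbounded dimension vectors, $X$ can lie in neither $\mathscr{P}$ nor $\mathscr{I}$ and must be regular. In the wild-type case one shows that the preprojective dimension vectors converge projectively to one extremal eigendirection of $\Phi$ and the preinjective ones to the other, so any indecomposable of $\mcc^0$ whose dimension vector points in a direction transverse to both $y^{+}$ and $y^{-}$ cannot be $n$-preprojective or $n$-preinjective, and Theorem~\ref{thm:mainB} guarantees that, once found, it is genuinely distinct from every indecomposable of $\mathscr{P}$ and $\mathscr{I}$.

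The main obstacle—and precisely the reason the statement remains open for $n\geq 2$—is the \emph{realization} step: one must guarantee that a prescribed ``intermediate'' dimension vector (the fixed $\delta$, or a transverse direction) is actually attained by an honest indecomposable object of $\mcc^0$, and not merely by an abstract element of $\mathrm{K}_0(\mcc^0)$. For $n=1$ this is automatic, since $\mcc^0=\modd\Lambda$ and Kac's theorem realizes every positive root by an indecomposable; combined with the fact that $\ind\modd\Lambda$ is uncountable over an algebraically closed field while $\ind\mathscr{P}\sqcup\ind\mathscr{I}$ is countable, this already yields $\mathscr{R}\neq0$. For $n\geq 2$ no such realization theorem is available: the subcategory $\mcc^0$ is cut out by the vanishing conditions $\Ext^i_\Lambda(\mathscr{P},X)=0=\Ext^i_\Lambda(X,\mathscr{I})$, and the many ``extra'' indecomposables of $\modd\Lambda$ need not lie in this restrictive subcategory. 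Supplying a higher-dimensional analogue of Kac's realization theorem adapted to $\mcc^0$, or any independent construction of one regular module from the spectral data of $\Phi$, is exactly what the argument lacks and is the crux of the problem.
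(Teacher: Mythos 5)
This statement is an open conjecture (Conjecture 6.9 of \cite{HIO}); the paper does not prove it, and neither do you. The paper only records two partial results in its direction: Proposition \ref{tame} shows $\mathscr{R}\neq 0$ for $n$-representation \emph{tame} algebras, and Corollary \ref{cor0} gives a sufficient condition ($\Ext^i_\Lambda(X,X)\neq 0$ for some $X\in\ind\mcc^0$ and $1\leq i\leq n$ forces $X\in\mathscr{R}$). Your proposal is a strategy sketch, and to your credit you identify its fatal gap yourself: the realization step. Nothing in the paper, or in the higher Auslander--Reiten literature it cites, guarantees that a dimension vector singled out by spectral properties of $\Phi$ (a $\Phi$-fixed positive vector $\delta$, or a direction transverse to the extremal eigendirections) is actually the dimension vector of an indecomposable object \emph{of $\mcc^0$}. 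The subcategory $\mcc^0$ is cut out by Ext-vanishing conditions against $\mathscr{P}$ and $\mathscr{I}$, and for $n\geq 2$ it can be very thin inside $\modd\Lambda$; the Kac-type realization and the cardinality argument you invoke for $n=1$ have no known analogue here. Without that step there is no candidate regular module, and the argument does not get off the ground.

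Two further cautions about the parts you treat as available tools. First, Theorem \ref{fi-fi} and Theorem \ref{dv} are proved in the paper only for \emph{odd} $n$, so even the identification of $n$-representation infiniteness with $\Phi$ having infinite order, and the injectivity of $X\mapsto\underline{\dim}X$ on $\ind\mathscr{P}$ and $\ind\mathscr{I}$, are not at your disposal for even $n$; a proof of the conjecture must cover all $n$. Second, the affine/wild trichotomy for the spectrum of $\Phi$ (existence of a positive fixed vector $\delta$ with $\chi(\delta)=0$, or of a Perron-type eigenvalue $\rho>1$ attracting the preprojective directions) is a classical fact for hereditary algebras that you would need to re-establish for the Coxeter matrix $\Phi=(-1)^nC_\Lambda^tC_\Lambda^{-1}$ of a general $n$-hereditary algebra; the paper proves no such spectral dichotomy. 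In short: the approach is a reasonable heuristic modelled on the $n=1$ case, but it is not a proof, and the statement remains a conjecture.
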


Herschend et al. in \cite{HIO} proved that the conjecture is true for $n$-representation tame algebras. To recall the definition of $n$-representation tame algebras, first we recall the concept of $(n+1)$-preprojective algebras.

Let $\Lambda$ be an $n$-representation infinite algebra. The \textit{$(n+1)$-preprojective algebra} $\hat{\Lambda}$ of $\Lambda$ is the tensor algebra of the $\Lambda$-bimodule $\Ext^n
_\Lambda(D\Lambda, \Lambda)$ over $\Lambda$:
$$\hat{\Lambda}\coloneqq T_\Lambda\Ext^n
_\Lambda(D\Lambda, \Lambda),$$
(see \cite[Section 4.2]{HIO} and \cite[Section 4]{K}).
We recall that a ring $\Gamma$ is called \textit{Noetherian} $R$-algebra if $R$ is a commutative Noetherian ring and $\Gamma$ is a finitely generated $R$-module.
\begin{definition}[\protect{\cite[Definition 6.10]{HIO}}]
An $n$-representation infinite algebra $\Lambda$ is called \textit{$n$-representation tame} if its $(n+1)$-preprojective algebra is a Noetherian algebra. 
\end{definition}
We recall some important properties of $n$-representation tame algebras.
\begin{proposition}[\protect{\cite[Propositions 6.13 and 6.23]{HIO}}]\label{tame}
 Let $\Lambda$ be an $n$-representation tame algebra. Then the following hold.
 \begin{itemize}
 \item[(a)]
 The category $\mathscr{R}$ is non-zero.
 \item[(b)]
There exists a positive integer $t$ such that $\tau_n^t(X)\cong X$ holds for any $X\in\mathscr{R}$.
 \end{itemize}
\end{proposition}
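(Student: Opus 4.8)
The plan is to read off both statements from the noncommutative projective geometry of the $(n+1)$-preprojective algebra $\hat\Lambda$; this is exactly where the Noetherian (tameness) hypothesis enters. Recall that $\hat\Lambda=\bigoplus_{i\ge0}\hat\Lambda_i$ is $\mathbb{Z}_{\ge0}$-graded with $\hat\Lambda_0=\Lambda$ and $\hat\Lambda_i\cong\Ext^n_\Lambda(D\Lambda,\Lambda)^{\otimes_\Lambda i}$, and that for $n$-representation infinite $\Lambda$ this algebra is a bimodule $(n+1)$-Calabi--Yau algebra of Gorenstein parameter $1$ (the higher analogue of the AS-regularity of a classical preprojective algebra; see \cite{HIO}). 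Since $\hat\Lambda$ is Noetherian, I would form the noncommutative projective scheme $\operatorname{qgr}\hat\Lambda=\operatorname{gr}\hat\Lambda/\operatorname{fd}\hat\Lambda$, the Serre quotient of finitely generated graded modules by finite-dimensional ones, together with its grading-shift autoequivalence. The key input to assemble is the dictionary: $\operatorname{qgr}\hat\Lambda$ is a Hom-finite, Ext-finite category with a Serre functor, and Gorenstein parameter $1$ identifies that Serre functor with $(\text{shift})\circ[n]$, so that its intrinsic $n$-Auslander--Reiten translate $\mathbb{S}\circ[-n]$ is precisely the grading shift. Comparing with $\modd\Lambda$, this grading shift matches $\tau_n$ (by Theorem \ref{thm:MainA} and Iyama's $n$-Auslander--Reiten duality the Serre functor on the relevant part of $\modd\Lambda$ is $\tau_n[n]$), and the regular modules $\mathscr{R}$ correspond exactly to the finite-length objects of $\operatorname{qgr}\hat\Lambda$. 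This comparison is the engine for (a) and (b).

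Granting the dictionary, part (a) is almost formal. Because $\Lambda$ is $n$-representation infinite, $\nu_n^{-i}(\Lambda)$ is a nonzero module for every $i\ge0$, so every graded component $\hat\Lambda_i$ is nonzero and $\hat\Lambda$ is infinite-dimensional over $K$. Hence $\operatorname{qgr}\hat\Lambda\neq0$, and being Noetherian and Hom-finite it possesses a simple, hence finite-length, object (equivalently, module-finiteness of $\hat\Lambda$ over its homogeneous centre makes $\operatorname{Proj}$ of that centre a nonempty projective scheme, which has closed points). Pulling such an object back along the comparison produces a nonzero object of $\mathscr{R}$, so $\mathscr{R}\neq0$. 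I would stress that the Noetherian hypothesis is essential: in its absence the non-vanishing of $\mathscr{R}$ is open in general, being Conjecture \ref{r0}.

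For part (b) I would analyse the grading shift on the finite-length objects of $\operatorname{qgr}\hat\Lambda$. Such objects are supported at the closed points of $\operatorname{Proj}$ of the Noetherian centre, and the shift permutes them with uniformly bounded orbits: all but finitely many supports are \emph{homogeneous}, giving fixed objects of period $1$, while the finitely many exceptional supports give orbits of bounded length. Taking $t$ to be the least common multiple of these finitely many periods, the grading shift to the power $t$ is isomorphic to the identity on every finite-length object; equivalently, the dualizing twist $\tau_n=\mathbb{S}\circ[-n]$ is torsion on the torsion part of $\operatorname{qgr}\hat\Lambda$. Transporting back through the dictionary gives $\tau_n^{t}(X)\cong X$ for all $X\in\mathscr{R}$. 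That this statement is even well posed uses Proposition \ref{tunu}, which already guarantees that $\tau_n$ restricts to an autoequivalence of $\mathscr{R}$.

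The main obstacle is neither (a) nor (b) taken alone but the construction of the dictionary in the first paragraph: verifying that $\operatorname{qgr}\hat\Lambda$ admits a Serre functor computing $\tau_n$, that the Gorenstein-parameter-$1$ data pins this functor down as $(\text{shift})[n]$, and---most delicate---that the finite-length objects of $\operatorname{qgr}\hat\Lambda$ are identified with $\mathscr{R}$ compatibly with the two translates. Extracting the \emph{uniform} period $t$ in (b), as opposed to a period depending on $X$, is the other sensitive point, and it is exactly here that module-finiteness of $\hat\Lambda$ over its Noetherian centre (hence tameness) is indispensable, since this is what forces the exceptional locus to be finite. Once this geometric package is established, both conclusions follow from two elementary facts: a nonzero Noetherian projective scheme has closed points, and a finite permutation action has a common period.
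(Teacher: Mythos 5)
First, a point of order: the paper does not prove Proposition \ref{tame} at all. It is stated in the Preliminaries with an explicit attribution to \cite[Propositions 6.13 and 6.23]{HIO} and is used throughout as imported background, so there is no in-paper argument to compare yours against; the only meaningful comparison is with the proof in \cite{HIO} itself. Your overall strategy --- pass to the graded $(n+1)$-preprojective algebra $\hat{\Lambda}$, use that it is bimodule $(n+1)$-Calabi--Yau of Gorenstein parameter $1$ and module-finite over a commutative Noetherian graded central subring, and read off both statements from the torsion part of $\operatorname{qgr}\hat{\Lambda}$, on which the grading shift realizes $\tau_n^{\pm}$ --- is indeed the mechanism of Herschend--Iyama--Oppermann's Section 6, so the plan points in the right direction.

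As a proof, however, the proposal is incomplete in two concrete places. First, everything is funnelled through the ``dictionary'' identifying $\mathscr{R}$ with the finite-length objects of $\operatorname{qgr}\hat{\Lambda}$ compatibly with the two translates; you flag this yourself, but it cannot be treated as a black box to be checked later --- it is the entire technical content of the argument and is exactly where the Calabi--Yau, Gorenstein-parameter-$1$ and Noetherian hypotheses are consumed. Second, and more seriously, the argument for (b) does not close even granting the dictionary: knowing that some power $\sigma^{t}$ of the shift fixes every \emph{simple} finite-length object up to isomorphism does not imply $\sigma^{t}(X)\cong X$ for every finite-length $X$ (an autoequivalence can fix all simples without acting as the identity on the objects they generate by extensions --- compare a non-inner automorphism of a finite dimensional algebra fixing all simple modules), and the claim that ``all but finitely many supports are homogeneous, giving fixed objects of period $1$'' is asserted rather than proved; it amounts to a generic-Azumaya statement about $\hat{\Lambda}$ over its centre. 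A cleaner route, and the one I believe \cite{HIO} takes for Proposition 6.23, is to choose a central homogeneous non-zero-divisor $r\in Z(\hat{\Lambda})$ of some degree $\ell>0$ and show that multiplication by $r$ induces a functorial isomorphism $X\to\tau_n^{-\ell}(X)$ for $X\in\mathscr{R}$; this yields the uniform period $t=\ell$ in one stroke and avoids the orbit bookkeeping entirely. Your argument for (a) is essentially sound once the dictionary is in place (a nonzero $\operatorname{qgr}$ of an algebra module-finite over a Noetherian graded centre does contain simple torsion objects), but it, too, is only as strong as the unproved comparison on which it rests.
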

\section{$n$-Auslander-Reiten duality for $n$-hereditary algebras}\label{s3}
In this section, we first recall $n$-Auslander-Reiten duality for $n$-cluster tilting subcategories of $\modd\Lambda$. Then we show that one can be generalized this duality to subcategories $\mathscr{P}$ and $\mathscr{I}$ of $\mcc^0$ for $n$-hereditary algebras.
Moreover, we generalize some well-known properties of hereditary algebras to $n$-hereditary algebras. Finally, we provide a criterion for determining the $n$-representation infiniteness of $n$-hereditary algebras. Our criterion is also a sufficient condition for the validity of Conjecture \ref{r0}.

We recall that a subcategory $\mcm$ of $\modd\Lambda$ is called \textit{$n$-rigid} if $\Ext^i
_\Lambda(\mcm,\mcm) = 0$, for each $i\in\{1,\dots,n-1\}$. Let $\Lambda$ be an $n$-representation finite algebra. It is obvious that $\mcc^0$ is an $n$-rigid subcategory of $\modd\Lambda$. We generalize this fact to $n$-hereditary algebras as follows.

 \begin{proposition}\label{pic}
Let $n\geq 2$ and $\Lambda$ be an $n$-hereditary algebra. Then, $\Ext^i
_\Lambda(\mcc^0,\mathscr{P}\vee\mathscr{I})=\Ext^i
_\Lambda(\mathscr{P}\vee\mathscr{I},\mcc^0)=0$ for any $i\in\{1,\dots,n-1\}$.
 \end{proposition}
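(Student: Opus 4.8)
The plan is to split along the dichotomy of Theorem \ref{de}: every $n$-hereditary algebra is either $n$-representation finite or $n$-representation infinite, and I would treat these two cases separately. The uniform device throughout is the identification $\Ext^i_\Lambda(X,Y)\cong\Hom_{\md_\Lambda}(X,Y[i])$ for $X,Y\in\modd\Lambda$, which rewrites every $\Ext$ appearing in the statement as a morphism space in $\md_\Lambda$, so that the defining vanishing conditions of the subcategory $\mcc$ can be invoked directly.

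The $n$-representation finite case I expect to be immediate. By Theorem \ref{th1} one has $\mcc^0=\mathscr{P}=\mathscr{I}$, hence $\mathscr{P}\vee\mathscr{I}=\mcc^0$, and this subcategory is $n$-cluster tilting and therefore $n$-rigid; this gives $\Ext^i_\Lambda(\mcc^0,\mcc^0)=0$ for $0<i<n$ with no further work. So the substance lies in the $n$-representation infinite case.

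There I would start from Proposition \ref{Thio}(c), namely $\uu=\mathscr{I}[-n]\vee\mathscr{P}$, extracting the two inclusions $\mathscr{P}\subseteq\uu$ and $\mathscr{I}\subseteq\uu[n]$, and recall $\mcc^0\subseteq\mcc$. Fixing $0<i<n$ and $X\in\mcc^0$, I would handle the $\mathscr{P}$-terms and the $\mathscr{I}$-terms in turn. For $P\in\mathscr{P}\subseteq\uu$ both $\Hom_{\md_\Lambda}(X,P[i])$ and $\Hom_{\md_\Lambda}(P,X[i])$ vanish straight from the two descriptions of $\mcc$ because $i\notin n\mathbb{Z}$. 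For $I\in\mathscr{I}$ I would write $I=J[n]$ with $J\coloneqq I[-n]\in\uu$ and rewrite $\Ext^i_\Lambda(X,I)=\Hom_{\md_\Lambda}(X,J[n+i])$ and $\Ext^i_\Lambda(I,X)=\Hom_{\md_\Lambda}(J,X[i-n])$.

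The one point that genuinely needs care---and the only place the hypothesis $0<i<n$ is used---is the bookkeeping of shifts: since $\mathscr{I}$ lives in $\uu[n]$ rather than in $\uu$, the shifts controlling the $\mathscr{I}$-terms are $n+i$ and $i-n$, and I must check that these avoid $n\mathbb{Z}$. Indeed $0<i<n$ forces $n<n+i<2n$ and $-n<i-n<0$, so both lie strictly between consecutive multiples of $n$; combined with $i\notin n\mathbb{Z}$ for the $\mathscr{P}$-terms, all four morphism spaces vanish by the definition of $\mcc$. Taking additive closures then yields $\Ext^i_\Lambda(\mcc^0,\mathscr{P}\vee\mathscr{I})=\Ext^i_\Lambda(\mathscr{P}\vee\mathscr{I},\mcc^0)=0$ for every $0<i<n$, finishing the infinite case.
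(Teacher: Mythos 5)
Your proposal is correct and follows essentially the same route as the paper: both rest on the defining vanishing $\Hom_{\md_\Lambda}(\uu,\mcc^0[i])=\Hom_{\md_\Lambda}(\mcc^0,\uu[i])=0$ for $i\notin n\mathbb{Z}$, the inclusion $\mathscr{P}\subseteq\uu$, and (in the infinite case) $\mathscr{I}[-n]\subseteq\uu$ from Proposition \ref{Thio}(c) with the same shift bookkeeping for $n+i$ and $i-n$. The only cosmetic difference is that you dispatch the representation-finite case via $n$-rigidity of the $n$-cluster tilting subcategory, whereas the paper notes $\mathscr{I}\subseteq\uu$ there and reuses the same Hom-vanishing argument.
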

 \begin{proof}
 By the definition of $\mcc^0$, we have
 $$\Hom_{\md_\Lambda}(\uu,\mcc^0[i])=\Hom_{\md_\Lambda}(\mcc^0,\uu[i])=0,\,\forall i\in\mathbb{Z}\backslash n\mathbb{Z}.$$
It is obvious that $\mathscr{P}\subseteq\uu$, so we have
 $$\Hom_{\md_\Lambda}(\mathscr{P},\mcc^0[i])=\Hom_{\md_\Lambda}(\mcc^0,\mathscr{P}[i])=0,\,\forall i\in\mathbb{Z}\backslash n\mathbb{Z}.$$
Specially, we have
\begin{align*}
\Ext^i
_\Lambda(\mathscr{P},\mcc^0)=\Hom_{\md_\Lambda}(\mathscr{P},\mcc^0[i])=0,\\
\Ext^i
_\Lambda(\mcc^0,\mathscr{P})=\Hom_{\md_\Lambda}(\mcc^0,\mathscr{P}[i])=0,
\end{align*}
for any $i\in\{1,\dots,n-1\}$. If $\Lambda$ is $n$-representation finite, then it is obvious that $\mathscr{I}\subseteq\uu$ and so similarly, we have
\begin{align*}
\Ext^i
_\Lambda(\mathscr{I},\mcc^0)=\Hom_{\md_\Lambda}(\mathscr{I},\mcc^0[i])=0,\\
\Ext^i
_\Lambda(\mcc^0,\mathscr{I})=\Hom_{\md_\Lambda}(\mcc^0,\mathscr{I}[i])=0,
\end{align*}
for any $i\in\{1,\dots,n-1\}$. If $\Lambda$ is $n$-representation infinite then by Proposition \ref{Thio}$(\mathrm{c})$, $\mathscr{I}[-n]\subseteq\uu$ and so we have
\begin{equation}\label{eqq0}
\Hom_{\md_\Lambda}(\mathscr{I}[-n],\mcc^0[i])=\Hom_{\md_\Lambda}(\mathscr{I},\mcc^0[i+n])=0,
\end{equation}
\begin{equation}\label{eqq1}
\Hom_{\md_\Lambda}(\mcc^0,\mathscr{I}[i-n])=0,\qquad\qquad\quad\quad\qquad\qquad\,
\end{equation}
for any $i\in\mathbb{Z}\backslash n\mathbb{Z}$.
Put $j=i+n$. By \eqref{eqq0} for any $i\in\{-1,\dots,-n+1\}$,  we have
\begin{align*}
\Ext^j
_\Lambda(\mathscr{I},\mcc^0)=\Hom_{\md_\Lambda}(\mathscr{I},\mcc^0[j])=0,\,\forall j\in\{1,\dots, n-1\}.
\end{align*}
Put $j=i-n$. By \eqref{eqq1} for any $i\in\{1+n,\dots,2n-1\}$, we have
\begin{align*}
\Ext^j
_\Lambda(\mcc^0,\mathscr{I})=\Hom_{\md_\Lambda}(\mcc^0,\mathscr{I}[j])=0,\,\forall j\in\{1,\dots, n-1\},
\end{align*}
and the proof is complete.
\end{proof}
Proposition \ref{pic} is not only frequently used in the rest of the paper but is also of independent interest. Iyama has given $n$-Auslander-Reiten duality for $n$-cluster tilting subcategories which is usual Auslander-Reiten duality for $n=1$.
Let $\mcm$ be an $n$-cluster tilting subcategory of $\modd\Lambda$. Then, there exist the following
 functorial isomorphisms for any $X, Y\in\mcm$ and $Z\in\modd\Lambda$.
$$
\underline{\Hom}_\Lambda(X,Z)\cong D\Ext^n_\Lambda(Z,\tau_nX)\quad\,\,\text{and}\quad\,\,
\overline{\Hom}_\Lambda(Z,Y)\cong D\Ext^n_\Lambda(\tau_n^-Y,Z),$$ 
(see \cite[Theorem 2.3.1]{I1}).
To prove these isomorphisms, Iyama applied the following more general result.
\begin{theorem}[\protect{\cite[Theorem 1.5]{I1}}]\label{gARd}
Consider the following subcategories of $\modd\Lambda$.
\begin{align*}
\mathscr{X}&\coloneqq\{X\in\modd\Lambda\,|\,\mathrm{Ext}^i_\Lambda(X,\Lambda)=0, \,\,\,  0<i<n\},\\
\mathscr{Y}&\coloneqq\{X\in\modd\Lambda\,|\,\mathrm{Ext}^i_\Lambda(D\Lambda,X)=0, \,\,\,  0<i<n\}.
\end{align*}
Then, there exist the following functorial isomorphisms for any $X\in\mathscr{X}$, $Y\in\mathscr{Y}$ and $Z\in\modd\Lambda$.
$$
\underline{\Hom}_\Lambda(X,Z)\cong D\Ext^n_\Lambda(Z,\tau_nX)\quad\,\,\text{and}\quad\,\,
\overline{\Hom}_\Lambda(Z,Y)\cong D\Ext^n_\Lambda(\tau_n^-Y,Z).$$
\end{theorem}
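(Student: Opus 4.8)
The plan is to deduce the generalized duality from Serre duality on the bounded derived category $\md_\Lambda$ together with the vanishing hypotheses defining $\mathscr{X}$ and $\mathscr{Y}$. I will establish the first isomorphism for $X\in\mathscr{X}$; the second, for $Y\in\mathscr{Y}$, then follows by applying the first to $\Lambda^{\mathrm{op}}$ and dualizing, using $\tau_n^-=\Ext^n_{\Lambda^{\mathrm{op}}}(D-,\Lambda)$. First I would record the key projective identity: for a finitely generated projective $P$ and any $Z\in\modd\Lambda$ there is a natural isomorphism $\Hom_\Lambda(P,Z)\cong D\Hom_\Lambda(Z,\nu P)$, where $\nu P\coloneqq D\Hom_\Lambda(P,\Lambda)$ is injective. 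This is the tensor--hom duality $\Hom_\Lambda(Z,DM)\cong D(Z\otimes_\Lambda M)$ applied to $M=\Hom_\Lambda(P,\Lambda)$, and it makes $\nu\colon\mathrm{proj}\,\Lambda\to\mathrm{inj}\,\Lambda$ an equivalence.

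Next I would take a minimal projective resolution $0\to P_n\to\cdots\to P_0\to X\to 0$, which has length at most $n$ since $\gd\Lambda\le n$. Applying $\Hom_\Lambda(-,\Lambda)$ and then $D$, and using that $X\in\mathscr{X}$ kills the intermediate cohomology $\Ext^i_\Lambda(X,\Lambda)$ for $0<i<n$, this produces an exact sequence of modules
$$0\to\tau_n X\to\nu P_n\to\cdots\to\nu P_0\to\nu X\to 0,$$
with all $\nu P_i$ injective and $\nu X\coloneqq D\Hom_\Lambda(X,\Lambda)$. Thus the complex $J^\bullet\coloneqq(\nu P_n\to\cdots\to\nu P_0)$ of injectives represents $\nu X$ in $\md_\Lambda$ and has cohomology concentrated in exactly two degrees, namely $\tau_n X$ at the left end and $\nu X$ at the right end. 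Applying the projective identity termwise gives an isomorphism of complexes $\Hom_\Lambda(Z,J^\bullet)\cong D\Hom_\Lambda(P_\bullet,Z)$, whose cohomology therefore computes $D\Ext^\ast_\Lambda(X,Z)$; in degree $0$ this reads $\Hom_{\md_\Lambda}(Z,\nu X)\cong D\Hom_\Lambda(X,Z)$.

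The core of the argument is then to read off $\Ext^n_\Lambda(Z,\tau_n X)$ from the canonical truncation triangle $\tau_n X[n]\to J^\bullet\to\nu X\xrightarrow{+1}$ in $\md_\Lambda$. Applying $\Hom_{\md_\Lambda}(Z,-)$ and using $\gd\Lambda\le n$ to kill $\Ext^{n+1}$, together with the vanishing of $\Ext^{-1}$, the long exact sequence collapses to a short exact sequence
$$0\to\Ext^n_\Lambda(Z,\tau_n X)\to D\Hom_\Lambda(X,Z)\to\Hom_\Lambda(Z,\nu X)\to 0,$$
in which the middle term is the degree-$0$ identification from the previous step. Dualizing shows that $D\Ext^n_\Lambda(Z,\tau_n X)$ is the cokernel of the natural map $D\Hom_\Lambda(Z,\nu X)\to\Hom_\Lambda(X,Z)$.

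The final step, which I expect to be the main obstacle, is to identify this cokernel with $\underline{\Hom}_\Lambda(X,Z)$: one must verify that the image of $D\Hom_\Lambda(Z,\nu X)\to\Hom_\Lambda(X,Z)$ is precisely the subspace $\mathcal{P}(X,Z)$ of homomorphisms factoring through a projective, so that the quotient is the projectively stable $\Hom$. This is the classical projective-stability lemma, traced through the projective presentation $\nu P_1\to\nu P_0\to\nu X\to 0$ and the key identity of the first paragraph. The delicate part is not the abstract isomorphism of groups but the \emph{naturality}: I would have to check that every identification above (the projective identity, the truncation triangle, the dualization, and the stability lemma) is functorial in $X$ and $Z$, so that the maps assemble into the claimed functorial isomorphism $\underline{\Hom}_\Lambda(X,Z)\cong D\Ext^n_\Lambda(Z,\tau_n X)$.
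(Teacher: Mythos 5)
This theorem is quoted in the paper from \cite[Theorem 1.5]{I1} without proof, so there is no in-paper argument to compare against; judged on its own terms, your outline is correct and is essentially the standard proof of $n$-Auslander--Reiten duality (the one underlying Iyama's original argument), recast in the derived category. All the main steps check out: the complex $J^\bullet=(\nu P_n\to\cdots\to\nu P_0)$ computes $\nu X$ in the derived sense and, thanks to $X\in\mathscr{X}$, has cohomology only in degrees $-n$ and $0$, namely $\tau_nX$ and $D\Hom_\Lambda(X,\Lambda)$ (note the slight notational clash: $J^\bullet$ represents the \emph{derived} Nakayama image of $X$, whose $H^0$ is what you call $\nu X$); the truncation triangle plus $\gd\Lambda\le n$ does collapse the long exact sequence as you claim; and the reduction of the second isomorphism to the first over $\Lambda^{\op}$ works because $\Ext^i_\Lambda(D\Lambda,Y)\cong\Ext^i_{\Lambda^{\op}}(DY,\Lambda)$. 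You correctly identify the only substantive remaining work: the boundary map you produce is, up to the tensor--hom identification $D\Hom_\Lambda(Z,D\Hom_\Lambda(X,\Lambda))\cong Z\otimes_\Lambda\Hom_\Lambda(X,\Lambda)$, the evaluation map $Z\otimes_\Lambda\Hom_\Lambda(X,\Lambda)\to\Hom_\Lambda(X,Z)$, whose image is exactly $\mathcal{P}(X,Z)$ by Auslander's classical lemma; verifying that the map from the triangle really is this evaluation map, and that everything is natural in $X$ and $Z$, is routine but must be done. No gaps beyond that.
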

By using the above theorem, we can generalize $n$-Auslander-Reiten duality to subcategory $\mcc^0$ of $n$-hereditary algebras.
\begin{theorem}\label{ARd1}
Let $\Lambda$ be an $n$-hereditary algebra. Then, there exist the following
 functorial isomorphisms for any $X, Y\in\mcc^0$ and $Z\in\modd\Lambda$.
$$
\underline{\Hom}_\Lambda(X,Z)\cong D\Ext^n_\Lambda(Z,\tau_nX)\quad\,\,\text{and}\quad\,\,
\overline{\Hom}_\Lambda(Z,Y)\cong D\Ext^n_\Lambda(\tau_n^-Y,Z).$$
\end{theorem}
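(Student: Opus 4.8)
The plan is to deduce the result directly from Iyama's general duality Theorem~\ref{gARd} by verifying that $\mcc^0$ is contained in both of the subcategories $\mathscr{X}$ and $\mathscr{Y}$ appearing there. Recall that
$$\mathscr{X}=\{X\in\modd\Lambda\,|\,\Ext^i_\Lambda(X,\Lambda)=0,\ 0<i<n\}\quad\text{and}\quad \mathscr{Y}=\{X\in\modd\Lambda\,|\,\Ext^i_\Lambda(D\Lambda,X)=0,\ 0<i<n\}.$$
Once the inclusions $\mcc^0\subseteq\mathscr{X}$ and $\mcc^0\subseteq\mathscr{Y}$ are established, the two functorial isomorphisms are exactly the conclusion of Theorem~\ref{gARd} specialized to $X,Y\in\mcc^0$ and arbitrary $Z\in\modd\Lambda$.

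To verify $\mcc^0\subseteq\mathscr{X}$, I would first observe that $\Lambda$ itself lies in $\mathscr{P}$: indeed $\Lambda=\tau_n^{0}(\Lambda)$, so $\Lambda\in\add\{\tau_n^{-i}(\Lambda)\,|\,i\geq 0\}=\mathscr{P}\subseteq\mathscr{P}\vee\mathscr{I}$. Assuming $n\geq 2$, Proposition~\ref{pic} then gives $\Ext^i_\Lambda(\mcc^0,\mathscr{P}\vee\mathscr{I})=0$ for $0<i<n$, and in particular $\Ext^i_\Lambda(X,\Lambda)=0$ for every $X\in\mcc^0$ and $0<i<n$; hence $\mcc^0\subseteq\mathscr{X}$. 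Dually, since $D\Lambda=\tau_n^{0}(D\Lambda)\in\add\{\tau_n^{i}(D\Lambda)\,|\,i\geq 0\}=\mathscr{I}\subseteq\mathscr{P}\vee\mathscr{I}$, the vanishing $\Ext^i_\Lambda(\mathscr{P}\vee\mathscr{I},\mcc^0)=0$ from Proposition~\ref{pic} specializes to $\Ext^i_\Lambda(D\Lambda,X)=0$ for every $X\in\mcc^0$ and $0<i<n$, giving $\mcc^0\subseteq\mathscr{Y}$.

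The case $n=1$ is handled separately and trivially: there the range $0<i<n$ is empty, so $\mathscr{X}=\mathscr{Y}=\modd\Lambda=\mcc^0$, and the statement is just the classical Auslander--Reiten duality recovered from Theorem~\ref{gARd}. With both inclusions in hand for all $n$, applying Theorem~\ref{gARd} to $X,Y\in\mcc^0$ yields the two asserted isomorphisms. The substantive work has in fact already been carried out in Proposition~\ref{pic}; the only genuine obstacle was the $\Ext$-vanishing encoded there, whose proof splits according to the dichotomy of Theorem~\ref{de} and uses the defining orthogonality of $\mcc^0$ against $\uu$ together with the containments $\mathscr{P}\subseteq\uu$ and $\mathscr{I}[-n]\subseteq\uu$ in the $n$-representation infinite case. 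Given Proposition~\ref{pic}, the remaining step is the routine verification that $\Lambda$ and $D\Lambda$ are objects of $\mathscr{P}$ and $\mathscr{I}$ respectively, which I expect to present no difficulty.
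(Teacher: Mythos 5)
Your proposal is correct and follows essentially the same route as the paper: both reduce the statement to the inclusions $\mcc^0\subseteq\mathscr{X}$ and $\mcc^0\subseteq\mathscr{Y}$, which follow from $\Lambda\in\mathscr{P}$, $D\Lambda\in\mathscr{I}$ and the Ext-vanishing of Proposition~\ref{pic}, and then invoke Theorem~\ref{gARd}. Your explicit remark that the case $n=1$ is vacuous is a minor point the paper leaves implicit, but the argument is the same.
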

\begin{proof}
By Theorem \ref{gARd}, it is enough to show that $\mcc^0\subseteq \mathscr{X}$ and  $\mcc^0\subseteq\mathscr{Y}$. By definitions of $\mathscr{P}$ and $\mathscr{I}$, it is obvious that $\Lambda\in\mathscr{P}$ and $D\Lambda\in\mathscr{I}$. Assume that $X\in\mcc^0$. Proposition \ref{pic} implies that $\Ext_\Lambda^i(X,\Lambda)=\Ext_\Lambda^i(D\Lambda,X)=0$, for any $i\in\{1,\dots,n-1\}$. Therefore, $X\in\mathscr{X}$ and $X\in\mathscr{Y}$ and the result follows.
\end{proof}
Now, we give the following properties of $n$-hereditary algebras, which is well-known for hereditary algebras (for example see \cite[Section $\rm{VII}.6$]{SY}).
\begin{theorem}\label{n-ext}
Let $\Lambda$ be an $n$-hereditary algebra.
\begin{itemize}
\item[(a)]
Assume that $X\in\ind\mathscr{P}$.
\begin{itemize}
\item[(i)]
There are a unique integer $i\geq 0$ and a unique indecomposable projective $\Lambda$-module $P$ such that $X\cong\nu_n^{-i}P\cong\tau_n^{-i}P$.
\item[(ii)]
$\dim_K\End_\Lambda(X)=\dim_K\End_\Lambda(P)$.
\item[(iii)]
There exists an integer $t\geq0$ such that $\tau_n^tX\neq0$ and $\tau_n^{t+1}X=0$.
\item[(iv)]
$\Ext_\Lambda^i(X,X)=0$, for any $i\in\{1,\dots,n\}$.
\end{itemize}
\item[(b)]
Assume that $X\in\ind\mathscr{I}$.
\begin{itemize}
\item[(i)]
There are a unique integer $i\geq 0$ and a unique indecomposable injective $\Lambda$-module $I$ such that $X\cong\nu_n^{i}I\cong\tau_n^{i}I$.
\item[(ii)]
$\dim_K\End_\Lambda(X)=\dim_K\End_\Lambda(I)$.
\item[(iii)]
There exists an integer $r\geq0$ such that $\tau_n^{-r}X\neq0$ and $\tau_n^{-r-1}X=0$.
\item[(iv)]
$\Ext_\Lambda^i(X,X)=0$, for any $i\in\{1,\dots,n\}$.
\end{itemize}
\end{itemize}
\end{theorem}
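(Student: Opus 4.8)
The plan is to prove part (a) in detail; part (b) then follows by a dual argument, replacing $\mathscr{P}$, $\Lambda$, $\tau_n^-$ by $\mathscr{I}$, $D\Lambda$, $\tau_n$ throughout, and invoking $\Lambda^{\op}$-versions of the same facts. For (a)(i), I would start from the defining presentation $\mathscr{P}=\add\{\tau_n^{-i}(\Lambda)\mid i\geq 0\}$, so any indecomposable $X\in\mathscr{P}$ is a summand of some $\tau_n^{-i}(\Lambda)$. The equivalence $\nu_n^{-1}=\tau_n^-$ on $\mcc^0_I$ from Proposition \ref{tunu} lets me replace $\tau_n^{-i}$ by $\nu_n^{-i}$ once the relevant modules lie in the appropriate subcategory, so $X\cong\nu_n^{-i}P$ for an indecomposable projective $P$. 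For uniqueness of the pair $(i,P)$, I would use Proposition \ref{thio0}: if $\nu_n^{-i}P\cong\nu_n^{-j}Q$ with $i<j$, then applying $\nu_n^{j}$ gives a nonzero map $\nu_n^{j-i}P\to Q$ realized inside $\md_\Lambda$, which contradicts the vanishing of $\Hom_{\md_\Lambda}(\nu_n^{a}(\Lambda),\nu_n^{b}(\Lambda))$ for $a\neq b$ that Proposition \ref{thio0} guarantees (after reducing the general indecomposable projectives to summands of $\Lambda$). This forces $i=j$, and then $P\cong Q$ since $\nu_n^{-i}$ is an autoequivalence of $\md_\Lambda$.

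For (a)(ii), the identity $\dim_K\End_\Lambda(X)=\dim_K\End_\Lambda(P)$ follows once I know $X\cong\nu_n^{-i}P$ with $\nu_n^{-i}$ an autoequivalence of $\md_\Lambda$: I would compute
$$
\End_\Lambda(X)\cong\Hom_{\md_\Lambda}(\nu_n^{-i}P,\nu_n^{-i}P)\cong\Hom_{\md_\Lambda}(P,P)\cong\End_\Lambda(P),
$$
where the first isomorphism uses that $X$ and $P$ are concentrated in degree $0$ (so that $\Hom_\Lambda=\Hom_{\md_\Lambda}$ between them), and the middle isomorphism is the autoequivalence. The only subtlety is making sure $\nu_n^{-i}P$ really is a module (a stalk complex), which is exactly the $n$-hereditary/$n$-representation hypothesis packaged in the description of $\mathscr{P}$; this is where I must split into the $n$-representation finite and $n$-representation infinite cases, using Theorem \ref{th1} in the former and Proposition \ref{Thio}(a) in the latter.

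For (a)(iii), in the $n$-representation infinite case $\tau_n^{-i}(\Lambda)\neq 0$ for all $i\geq 0$ by Proposition \ref{Thio}(a), so no such finite $t$ exists under $\tau_n^-$; but the statement is about $\tau_n$ (the non-inverse translation), and applying $\tau_n$ to $X=\nu_n^{-i}P$ repeatedly eventually lands on $P$ (at step $i$) and then kills it, since $\tau_n(P)=0$ for projective $P$ because $\tau_n=D\Ext^n_\Lambda(-,\Lambda)$ vanishes on projectives. Thus $t=i$ works: $\tau_n^{i}X\cong P\neq 0$ while $\tau_n^{i+1}X\cong\tau_n P=0$. I would verify that each intermediate $\tau_n^{k}X$ for $k\le i$ is the honest module $\nu_n^{i-k}P$ (again via Proposition \ref{tunu}), so the translation does not prematurely vanish. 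Finally, for (a)(iv) I would combine the rigidity Proposition \ref{pic} with the $n$-Auslander--Reiten duality just established in Theorem \ref{ARd1}: Proposition \ref{pic} gives $\Ext^i_\Lambda(X,X)=0$ for $1\le i\le n-1$ since $X\in\mathscr{P}\subseteq\mcc^0$, and for the top degree $i=n$ I would apply Theorem \ref{ARd1} to get $\Ext^n_\Lambda(X,X)\cong D\,\underline{\Hom}_\Lambda(\tau_n^- X, X)$ (or the analogous expression), then argue this stable Hom vanishes using that $X$ is $n$-preprojective together with Proposition \ref{thio0} to preclude the relevant maps. The main obstacle I anticipate is precisely this last vanishing in top degree: I must carefully track projective summands and the passage between $\modd\Lambda$ and $\md_\Lambda$ so that the $n$-Auslander--Reiten formula applies and the resulting Hom-space is genuinely zero, rather than merely stably zero; handling the boundary module $P$ (where $\tau_n^- X$ or $\tau_n X$ degenerates) is the delicate point.
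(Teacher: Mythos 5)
Your proposal is correct and follows essentially the same route as the paper: (i) via the definition of $\mathscr{P}$ together with Proposition \ref{Thio}(a) (resp.\ Theorem \ref{th1} and Propositions \ref{per}, \ref{tunu} in the finite case), (ii) via the autoequivalence $\nu_n^{-i}$, (iii) via $\tau_n^{i}X\cong P$ and $\tau_n P=0$, and (iv) via Proposition \ref{pic} in degrees $1,\dots,n-1$ plus $n$-Auslander--Reiten duality and Proposition \ref{thio0} in degree $n$, with the same case split for the boundary module you flag. The only minor imprecision is your appeal to Proposition \ref{thio0} for ``$a\neq b$'' in the uniqueness step (it only gives vanishing for $a<b$), but an isomorphism supplies nonzero maps in both directions, so the argument goes through.
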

\begin{proof}
We only prove $(\mathrm{a})$, the proof of $(\mathrm{b})$ is similar.

$(\mathrm{i})$ If $\Lambda$ is $n$-representation infinite, then the result follows from the definition of $\mathscr{P}$ and Proposition \ref{Thio}$(\mathrm{a})$. If $\Lambda$ is $n$-representation finite, then by Theorem \ref{th1}, $\mathscr{P}=\mcc ^0$ and the result follows from Propositions \ref{per}$(\mathrm{b})$ and \ref{tunu}.

$(\mathrm{ii})$ By the part $(\mathrm{i})$, there are an integer $i\geq 0$ and an indecomposable projective $\Lambda$-module $P$ such that $X\cong\nu_n^{-i}P\cong\tau_n^{-i}P$. Since $\nu_n^{-i}$ is an equivalence, we have
\begin{align*}
\dim_K\End_\Lambda(X) &=\dim_K\Hom_\Lambda(X,X)\\
&=\dim_K\Hom_\Lambda(\tau_n^{-i}P,\tau_n^{-i}P)\\
&=\dim_K\Hom_{\md_\Lambda}(\tau_n^{-i}P,\tau_n^{-i}P)\\
&=\dim_K\Hom_{\md_\Lambda}(\nu_n^{-i}P,\nu_n^{-i}P)\\
&=\dim_K\Hom_{\md_\Lambda}(P,P)\\
&=\dim_K\Hom_{\Lambda}(P,P)\\
&=\dim_K\End_\Lambda(P).
\end{align*}

$(\mathrm{iii})$ If $X$ is projective, then for $t=0$ the result follows. If $X$ is non-projective, then by the part $(\mathrm{i})$, there are an integer $t> 0$ and an indecomposable projective $\Lambda$-module $P$ such that $X\cong\tau_n^{-t}P$. By Proposition \ref{tunu}, $\tau_n^{t}X\cong\tau_n^{t}\tau_n^{-t}P\cong P\neq0$ and $\tau_n^{t+1}X\cong\tau_nP=0$.

$(\mathrm{iv})$
Proposition \ref{pic} implies that $\Ext_\Lambda^i(X,X)=0$, for any $i\in\{1,\dots,n-1\}$. Therefore, it is enough to show that $\Ext_\Lambda^n(X,X)=0$. By the part $(\mathrm{i})$, there are an integer $i\geq 0$ and an indecomposable projective $\Lambda$-module $P$ such that $X\cong\nu_n^{-i}P\cong\tau_n^{-i}P$. By applying $n$-Auslander-Reiten duality, Theorem \ref{ARd1}, we have 
\begin{align}\label{n1}
	\Ext_\Lambda^n(X,X) \cong\Ext_\Lambda^n(\tau_n^{-i}P,\tau_n^{-i}P)\cong D\underline{\Hom}_\Lambda(\tau_n^{-i-1}P,\tau_n^{-i}P).
\end{align}
If $\Lambda$ is $n$-representation finite and $X$ is injective, then $\tau_n^{-i-1}P=0$ and so by \eqref{n1}, $	\Ext_\Lambda^n(X,X)=0$. Consider the remaining two cases,  either $\Lambda$ is $n$-representation infinite or $\Lambda$ is $n$-representation finite and $X$ is non-injective.
Proposition \ref{tunu} implies that $\nu_n^{-i-1}P\cong\tau_n^{-i-1}P$ and therefore,
$$\Hom_\Lambda(\tau_n^{-i-1}P,\tau_n^{-i}P)\cong\Hom_{\md_\Lambda}(\nu_n^{-i-1}P,\nu_n^{-i}P).$$
Since $-i-1<-i$, Proposition \ref{thio0} implies that  $\Hom_{\md_\Lambda}(\nu_n^{-i-1}P,\nu_n^{-i}P)=0$ and so $\Hom_\Lambda(\tau_n^{-i-1}P,\tau_n^{-i}P)=0$. Therefore by \eqref{n1}, $\Ext_\Lambda^n(X,X)=0$. 
\end{proof}
	We recall that $X\in\modd \Lambda$ is called a \textit{brick} if $\End_\Lambda(X)$ is a division $K$-algebra. In the case that the field $K$ is algebraically closed, every finite dimensional division $K$-algebra is isomorphic to $K$. If there are only finitely many isomorphism classes of
bricks in $\modd \Lambda$, then $\Lambda$ is called \textit{brick--finite}. Demonet, Iyama and Jasso have studied these algebras in \cite{DIJ}. They have given equivalent conditions to brick--finiteness of $\Lambda$. It is obvious that if $\Lambda$ is representation finite, then it is also brick--finite. But, in general the converse is not true (for example, see \cite{Mi} and \cite{Ma}). For a hereditary $K$-algebra 
$\Lambda$ over algebraically closed field $K$, it is known that $\Lambda$ is representation finite if and only if $\Lambda$ is brick--finite (see \cite[Section $\rm{VII}.6$]{SY}). In the following  theorem, we prove higher dimensional version of this fact.
\begin{theorem}\label{bricks}
	Let $K$ be an algebraically closed field. Assume that $\Lambda$ is an $n$-hereditary $K$-algebra with the acyclic Gabriel quiver.
 Then the following are equivalent.
	\begin{enumerate}
		\item[(a)]
		$\Lambda$ is $n$-representation finite.
		\item[(b)]
		There are finitely many isomorphism classes of bricks in $\mathscr{P}$.
		\item[(c)]
		There are finitely many isomorphism classes of bricks in $\mathscr{I}$.
		\item[(d)]
		There are finitely many isomorphism classes of bricks in $\mcc^0$. 
	\end{enumerate}
\end{theorem}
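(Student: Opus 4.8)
The plan is to invoke the dichotomy of Theorem~\ref{de}: an $n$-hereditary algebra is either $n$-representation finite or $n$-representation infinite. Thus it suffices to show that (a) implies each of (b), (c), (d), and that $n$-representation infiniteness (the failure of (a)) forces infinitely many isomorphism classes of bricks in each of $\mathscr{P}$, $\mathscr{I}$ and $\mcc^0$. The first direction is immediate: if $\Lambda$ is $n$-representation finite, then by Theorem~\ref{th1} we have $\mcc^0=\mathscr{P}=\mathscr{I}=\add M$ for an $n$-cluster tilting object $M$, so $\ind\mcc^0$ is finite and each of these subcategories contains only finitely many bricks.

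The crux is therefore the converse, and it rests on a brick-preservation property along the $\nu_n$-orbits. Since $K$ is algebraically closed and every indecomposable $\Lambda$-module has a local endomorphism algebra with residue field $K$, an indecomposable $X$ is a brick if and only if $\dim_K\End_\Lambda(X)=1$. Because the Gabriel quiver of $\Lambda$ is acyclic, for each primitive idempotent $e_i$ the only path from $i$ to $i$ is the trivial one, so $\End_\Lambda(P_i)\cong e_i\Lambda e_i\cong K$; as $\Lambda^{\mathrm{op}}$ also has acyclic Gabriel quiver, the dual computation gives $\End_\Lambda(I_i)\cong K$. Hence every indecomposable projective and every indecomposable injective $\Lambda$-module is a brick.

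Now assume $\Lambda$ is $n$-representation infinite. By Proposition~\ref{Thio}(a) the map $(P,i)\mapsto\nu_n^{-i}(P)$ is a bijection $\ind(\mathrm{proj}\,\Lambda)\times\mathbb{Z}_{\geq 0}\to\ind\mathscr{P}$, so fixing one indecomposable projective $P$ yields infinitely many pairwise non-isomorphic indecomposables $\nu_n^{-i}(P)\in\mathscr{P}$. By Theorem~\ref{n-ext}(a)(ii), $\dim_K\End_\Lambda(\nu_n^{-i}(P))=\dim_K\End_\Lambda(P)=1$, so every $\nu_n^{-i}(P)$ is a brick. Thus $\mathscr{P}$ has infinitely many isomorphism classes of bricks, and since $\mathscr{P}\subseteq\mcc^0$ by Proposition~\ref{Thio}(f) these are bricks in $\mcc^0$ as well; this contradicts (b) and (d). The symmetric argument, using Proposition~\ref{Thio}(b) and Theorem~\ref{n-ext}(b)(ii), produces infinitely many bricks $\nu_n^{i}(I)\in\mathscr{I}$ and contradicts (c). Taking contrapositives, each of (b), (c), (d) implies (a), which completes the equivalence.

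I expect the only genuinely delicate point to be the reduction of brickness to the numerical condition $\dim_K\End_\Lambda(X)=1$ together with the role of acyclicity: it is precisely the acyclic Gabriel quiver hypothesis that guarantees the indecomposable projectives and injectives are bricks, and hence, via the dimension-preservation of Theorem~\ref{n-ext}, that their entire $\nu_n$-orbits consist of bricks. Without this hypothesis a projective could fail to be a brick and the orbit argument would collapse.
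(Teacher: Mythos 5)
Your proposal is correct and follows essentially the same route as the paper's proof: the dichotomy of Theorem~\ref{de}, the bijection of Proposition~\ref{Thio}(a), acyclicity to get $\dim_K\End_\Lambda(P)=1$, and Theorem~\ref{n-ext}(ii) to propagate brickness along the $\nu_n$-orbit. The only difference is that the paper writes out the argument for the equivalence of (a) and (b) and declares the remaining cases ``similar,'' whereas you spell out the injective and $\mcc^0$ cases explicitly.
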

\begin{proof}
	We only prove the equivalence of $(\mathrm{a})$ and $(\mathrm{b})$. The proof of other parts is similar. Clearly, $(\mathrm{a})$ implies $(\mathrm{b})$. Assume that there are finitely many isomorphism classes of bricks in $\mathscr{P}$. Suppose to the contrary that $\Lambda$ is not $n$-representation finite. By Theorem \ref{de}, $\Lambda$ is $n$-representation infinite. Consider an indecomposable projective $\Lambda$-module $P$. By Proposition \ref{Thio}$(\mathrm{a})$, $\tau_n^{-i}P$ are non-isomorphic for any $i\geq0$. Since the Gabriel quiver of $\Lambda$ is acyclic,  $\dim_K\End_\Lambda(P)=1$. Theorem \ref{n-ext}$(\mathrm{a})$, implies that $\dim_K\End_\Lambda(\tau^{-i}P)=\dim_K\End_\Lambda(P)=1$, for any $i\geq0$. Therefore, there are infinitely many isomorphism classes of bricks in $\mathscr{P}$ and this is a contradiction.
\end{proof}
As a consequence of Theorem \ref{n-ext}, we give a criterion to describe whether an $n$-hereditary algebra is $n$-representation infinite. This criterion is also a sufficient condition for 
the validity of Conjecture \ref{r0}.
\begin{corollary}\label{cor0}
Let $\Lambda$ be an $n$-hereditary algebra.
\begin{itemize}
\item[(a)]
If there are $X\in\ind\mcc^0$ and $i\in\{1,\dots,n\}$ satisfying $\Ext_\Lambda^i(X,X)\neq 0$, then $\Lambda$ is $n$-representation infinite. Moreover, in this case $\mathscr{R}\neq 0$.                   
\item[(b)]
Assume that $n\geq 2$. If there are $X\in\mcc^0$ and $i\in\{1,\dots,n-1\}$ satisfying $\Ext_\Lambda^i(X,X)\neq 0$, then $\Lambda$ is $n$-representation infinite. Moreover, in this case $\mathscr{R}\neq 0$.  
\end{itemize}
\end{corollary}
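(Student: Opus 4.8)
The plan is to reduce both parts to the dichotomy of Theorem \ref{de} together with the self-extension vanishing of Theorem \ref{n-ext} (for part (a)) and the rigidity of Proposition \ref{pic} (for part (b)). In each part I would argue by contradiction: if $\Lambda$ fails to be $n$-representation infinite, then Theorem \ref{de} forces it to be $n$-representation finite, and Theorem \ref{th1} collapses the three subcategories to $\mcc^0=\mathscr{P}=\mathscr{I}$. Under this collapse the hypothesis $\Ext_\Lambda^i(X,X)\neq 0$ will become untenable, which is what proves $n$-representation infiniteness.

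For part (a), once $\Lambda$ is seen to be $n$-representation finite one has $X\in\ind\mathscr{P}$, so Theorem \ref{n-ext}(a)(iv) forces $\Ext_\Lambda^i(X,X)=0$ for every $i\in\{1,\dots,n\}$, contradicting the assumption; hence $\Lambda$ is $n$-representation infinite. For the ``moreover'' clause I would invoke the decomposition $\mcc^0=\mathscr{P}\vee\mathscr{R}\vee\mathscr{I}$ of Proposition \ref{Thio}(f). As $X$ is indecomposable in $\mcc^0$, Krull--Schmidt places it in $\ind\mathscr{P}$, $\ind\mathscr{R}$ or $\ind\mathscr{I}$; the vanishing in Theorem \ref{n-ext}(a)(iv) and (b)(iv) rules out the first and third possibilities, leaving $X\in\ind\mathscr{R}$, so $\mathscr{R}\neq 0$.

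For part (b), the argument runs parallel but replaces Theorem \ref{n-ext} by Proposition \ref{pic}, which is what permits $X$ to be an arbitrary, not necessarily indecomposable, object of $\mcc^0$ at the cost of restricting to $i\in\{1,\dots,n-1\}$ and $n\geq 2$. Assuming $\Lambda$ is $n$-representation finite gives $\mcc^0=\mathscr{P}\vee\mathscr{I}$, so Proposition \ref{pic} yields $\Ext_\Lambda^i(\mcc^0,\mcc^0)=0$ for $i\in\{1,\dots,n-1\}$ and in particular $\Ext_\Lambda^i(X,X)=0$, a contradiction. For $\mathscr{R}\neq 0$ I would suppose that no summand of $X$ lies in $\mathscr{R}$; then $X\in\mathscr{P}\vee\mathscr{I}$, and Proposition \ref{pic}, applied with $X\in\mcc^0$ on one argument and $X\in\mathscr{P}\vee\mathscr{I}$ on the other, again gives $\Ext_\Lambda^i(X,X)=0$ for $i\in\{1,\dots,n-1\}$, against the hypothesis.

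I do not expect a serious obstacle, since the corollary mostly repackages results already established in this section. The one point needing care is the ordering in the ``moreover'' clauses: the decomposition of $\mcc^0$ into $\mathscr{P}$, $\mathscr{R}$, $\mathscr{I}$ is only available after $n$-representation infiniteness has been secured, so the two conclusions must be proved in that order rather than at once. A second minor subtlety is matching index ranges to the cited hypotheses — Proposition \ref{pic} requires $n\geq 2$ and controls $\Ext^i$ only for $i<n$, which is precisely why part (b) carries those restrictions, whereas part (a), leaning on the stronger vanishing up to $i=n$ in Theorem \ref{n-ext}, needs neither.
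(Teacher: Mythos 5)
Your proposal is correct and follows essentially the same route as the paper: contradiction via the dichotomy of Theorem \ref{de} and Theorem \ref{th1}, the self-extension vanishing of Theorem \ref{n-ext} for part (a), the rigidity of Proposition \ref{pic} for part (b), and the decomposition $\mcc^0=\mathscr{P}\vee\mathscr{R}\vee\mathscr{I}$ of Proposition \ref{Thio}(f) for the ``moreover'' clauses. The only cosmetic difference is that in the first step of part (b) the paper invokes the $n$-cluster tilting property of $\mcc^0$ directly rather than Proposition \ref{pic}; both give the needed rigidity.
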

\begin{proof}
$(\mathrm{a})$ Assume that $\Lambda$ is not $n$-representation infinite. Then by Theorem \ref{de}, $\Lambda$ is $n$-representation finite. By Theorem \ref{th1}, $\mathscr{P}=\mcc ^0$. Therefore, Theorem \ref{n-ext}$(\mathrm{a})$ implies that $\Ext_\Lambda^i(Y,Y)=0$, for any $Y\in\ind\mcc^0$ and $i\in\{1,\dots,n\}$. But this is a contradiction and so $\Lambda$ is $n$-representation infinite. By Proposition \ref{Thio}$(\mathrm{f})$, $\mcc^0=\mathscr{P}\vee\mathscr{R}\vee\mathscr{I}$. We claim that  $X\in\ind\mathscr{R}$ and so $\mathscr{R}\neq 0$.
Assume that $X\in\mathscr{P}\vee\mathscr{I}$. By Theorem \ref{n-ext}, $\Ext_\Lambda^i(Y,Y)=0$ for any $Y\in\ind(\mathscr{P}\vee\mathscr{I})$ and $i\in\{1,\dots,n\}$. So $X\in\mathscr{R}$ and the result follows.

$(\mathrm{b})$ Assume that $\Lambda$ is not $n$-representation infinite. Then by Theorem \ref{de}, $\Lambda$ is $n$-representation finite. Theorem \ref{th1} implies that  $\mcc ^0$ is an $n$-cluster tilting subcategory of $\modd\Lambda$. Therefore $\Ext_\Lambda^i(X,X)=0$, for any $X\in\mcc^0$ and $i\in\{1,\dots,n-1\}$. But this is a contradiction and so $\Lambda$ is $n$-representation infinite. Consider the decomposition $X=\bigoplus_{i=1}^sX_i$ of $X$ into indecomposable summands. By Proposition \ref{Thio}$(\mathrm{f})$, $\mcc^0=\mathscr{P}\vee\mathscr{R}\vee\mathscr{I}$. We claim that there is an $i\in\{1,\dots,s\}$ such that $X_i\in\mathscr{R}$ and so $\mathscr{R}\neq 0$.
Assume that $X_i\in\mathscr{P}\vee\mathscr{I}$, for any $i\in\{1,\dots,s\}$. Then by Proposition \ref{pic}, we have $\Ext_\Lambda^k(X_i,X_j)=0$, for any $i,j\in\{1,\dots,s\}$ and $k\in\{1,\dots,n-1\}$. Hence, $\Ext_\Lambda^k(X,X)=0$, for any $k\in\{1,\dots,n-1\}$ and this is a contradiction.
\end{proof}
Let $\Lambda$ be a hereditary algebra. Then the converse of Corollary \ref{cor0}$(\mathrm{a})$ is satisfied. In fact, there exists an indecomposable $\Lambda$-module $X$ with $\Ext_\Lambda^1(X,X)\neq0$ if and only if  $\Lambda$ is representation infinite (see \cite[Proposition $\mathrm{VIII}.2.9$]{ASS} or \cite[Proposition $\mathrm{VIII}.3.4$]{ARS}). Motivated by this result, we pose the following question.
\begin{question}
Let $\Lambda$ be an $n$-representation infinite algebra.
Are there $X\in\ind\mcc^0$ and $i\in\{1,\dots,n\}$ satisfying $\Ext_\Lambda^i(X,X)\neq 0$.
\end{question}
Note that if the answer of the above question is positive, then Theorem \ref{n-ext} implies that Conjecture \ref{r0} is true.
\section{Coxeter transformation of $n$-hereditary algebras}\label{s4}
In this section, we recall the concept of Coxeter transformation of $n$-hereditary algebras. Then we give useful formulas for the dimension vector of the $n$-Auslander--Reiten translations of non-projective and non-injective modules in $\ind\mcc^0$ in terms of Coxeter transformation.  

In the rest of the paper, we assume that $K$ is an algebraically closed field and $\Lambda$ is a basic finite dimensional $K$-algebra with a complete set of primitive orthogonal idempotents $\{e_1,\dots,e_m\}$. Also we denote by $\{S_1,\dots,S_m\}$, $\{P_1,\dots,P_m\}$ and $\{I_1,\dots,I_m\}$ the complete set of the isomorphism classes of simple, indecomposable projective and indecomposable injective  $\Lambda$-modules, respectively. The \textit{Cartan matrix} of $\Lambda$ is defined as the $m\times m$ matrix
$$C_\Lambda\coloneqq\left[ \begin{array}{lll}\dim_K e_1\Lambda e_1 &\dots & \dim_K e_1\Lambda e_m\\\quad\quad\vdots&\ddots&\quad\quad\vdots\\\dim_K e_m\Lambda e_1&\ldots&\dim_K e_m\Lambda e_m \end{array}\right]\in\mathrm{Mat}_m(\mathbb{Z}).$$
If $\gd\Lambda<\infty$, then we have the following fact.
\begin{proposition}[\protect{\cite[Proposition $\mathrm{III}$.3.10)]{ASS}}]\label{car}
Let $\gd\Lambda<\infty$. Then $C_\Lambda$ is invertible in the matrix ring $\mathrm{Mat}_m(\mathbb{Z})$.
\end{proposition}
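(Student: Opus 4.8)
The plan is to interpret $C_\Lambda$ inside the Grothendieck group $\mathrm{K}_0(\modd\Lambda)$ as the matrix expressing the classes of the indecomposable projectives in the basis of simples, and then to exploit $\gd\Lambda<\infty$ to show that the projective classes are themselves a $\mathbb{Z}$-basis. Recall that $\{[S_1],\dots,[S_m]\}$ is a $\mathbb{Z}$-basis of $\mathrm{K}_0(\modd\Lambda)$, so $\mathrm{K}_0(\modd\Lambda)\cong\mathbb{Z}^m$. First I would read off the entries of $C_\Lambda$ homologically: with $P_i=e_i\Lambda$ one has $\dim_K e_i\Lambda e_j=\dim_K\Hom_\Lambda(P_j,P_i)$, and since $K$ is algebraically closed $\End_\Lambda(S_j)\cong K$, so $\dim_K\Hom_\Lambda(P_j,P_i)$ equals the composition multiplicity $[P_i:S_j]$. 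Hence in $\mathrm{K}_0(\modd\Lambda)$
$$[P_i]=\sum_{j=1}^m (C_\Lambda)_{ij}\,[S_j],$$
so that $C_\Lambda$ is exactly the matrix expressing the tuple $([P_1],\dots,[P_m])$ in the basis $([S_1],\dots,[S_m])$.

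The key step is to prove that $\{[P_1],\dots,[P_m]\}$ generates $\mathrm{K}_0(\modd\Lambda)$. Here the hypothesis $\gd\Lambda<\infty$ enters: every $M\in\modd\Lambda$ admits a finite projective resolution
$$0\to P^{(k)}\to\cdots\to P^{(1)}\to P^{(0)}\to M\to 0,$$
with each $P^{(l)}$ a finite direct sum of the $P_i$. Additivity of $[-]$ on short exact sequences then yields $[M]=\sum_{l=0}^k(-1)^l[P^{(l)}]$, which lies in the subgroup generated by $[P_1],\dots,[P_m]$. Specializing $M=S_j$ shows that each $[S_j]$ lies in this subgroup; as the $[S_j]$ generate $\mathrm{K}_0(\modd\Lambda)$, so do the $[P_i]$.

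Finally I would conclude with a standard integrality argument: a set of $m$ generators of a free abelian group of rank $m$ is automatically a $\mathbb{Z}$-basis, since a surjective endomorphism of $\mathbb{Z}^m$ has trivial cokernel and hence determinant $\pm 1$ by Smith normal form, making it an isomorphism. Thus $([P_1],\dots,[P_m])$ is a $\mathbb{Z}$-basis, and since a tuple is a basis precisely when the matrix expressing it in another basis is invertible over $\mathbb{Z}$, we obtain that $C_\Lambda$ is invertible in $\mathrm{Mat}_m(\mathbb{Z})$. I expect the only genuine subtlety to be insisting on invertibility over $\mathbb{Z}$ rather than over $\mathbb{Q}$: mere $\mathbb{Q}$-linear independence of the $[P_i]$ would give only $\det C_\Lambda\neq 0$, whereas the integral statement truly requires the \emph{spanning} property of the projective classes, which is exactly what finite global dimension supplies.
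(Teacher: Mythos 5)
Your argument is correct and is essentially the standard proof from the cited reference [ASS, Proposition III.3.10]; the paper itself states this proposition without proof, relying on that citation. The key points — that finite global dimension makes $[P_1],\dots,[P_m]$ a generating set, hence a $\mathbb{Z}$-basis, of $\mathrm{K}_0(\modd\Lambda)\cong\mathbb{Z}^m$, and that $C_\Lambda$ is precisely the transition matrix to the basis of simples, so it must be invertible over $\mathbb{Z}$ — are exactly the intended ones.
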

We recall that for $X\in\modd\Lambda$, the \textit{dimension vector} of $X$ is defined as the following $m$-vector.
$$\underline{\dim}X\coloneqq\left[ \begin{array}{l}\dim_K Xe_1\\\quad\quad\vdots\\\dim_K Xe_m \end{array}\right]$$

It is obvious that the $i$-th column of $C_\Lambda$ is $\underline{\dim}P_i$.

For a basic finite dimensional $K$-algebra $\Lambda$ with $\gd\Lambda<\infty$, the \textit{Coxeter matrix} of $\Lambda$ is defined as the matrix
$$\Phi\coloneqq -C_\Lambda^t C_\Lambda^{-1}.$$
Also, the \textit{Coxeter transformation} of $\Lambda$ is defined as the group isomorphism
\begin{align*}
 \Phi:\mathbb{Z}^m &\to\mathbb{Z}^m.\\
 x&\mapsto  \Phi\cdot x
 \end{align*}
 Let $\mathcal{C}$ be an essentially small additive category. We denote  the free abelian group with basis the isomorphism classes of all objects $C$ in $\mathcal{C}$ by $G(\mathcal{C})$. Then the \textit{split Grothendieck group} of $\mathcal{C}$ is defined as
 \begin{equation}
 	\mathrm{K}_0(\mathcal{C},0)\coloneqq G(\mathcal{C})/\langle[A\oplus B]-[A]-[B]\ | \ A,B\in \mathcal{C}\rangle. \notag
 \end{equation}
 For any $X\in\mathcal{C}$, we denote by $[X]_0$ the corresponding element in $\mathrm{K}_0(\mathcal{C},0)$.
 Let $\mathcal{C}$ be an abelian category. The \textit{Grothendieck group} of $\mathcal{C}$ is a quotient group of $\mathrm{K}_0(\mathcal{C},0)$ defined as
 \begin{equation}
 	\mathrm{K}_0(\mathcal{C})\coloneqq\mathrm{K}_0(\mathcal{C},0)/\langle[X]_0-[Y]_0+[Z]_0\,|\,0\to X\to Y\to Z\to0\text{ is a short exact sequence in\,\,} \mathcal{C}\rangle.\notag
 \end{equation}
Now, consider the Grothendieck group $\mathrm{K}_0(\modd\Lambda)$. For any $X\in\modd\Lambda$, we denote by $[X]$ the corresponding element in $\mathrm{K}_0(\modd\Lambda)$. By \cite[Theorem $\mathrm{III}$.3.5]{ASS}, $\{[S_1],\dots,[S_m]\}$ is a basis for $\mathrm{K}_0(\modd\Lambda)$ and $\mathrm{K}_0(\modd\Lambda)\cong\mathbb{Z}^m$. In fact, there is the unique group
isomorphism
\begin{align*}
\mathrm{K}_0(\modd\Lambda) &\to\mathbb{Z}^m\\
	[X] &\mapsto \underline{\dim}X.
\end{align*}‌ 
If $\Lambda$ is of finite global dimension, then the sets $\{[P_1],\dots,[P_m]\}$ and $\{[I_1],\dots,[I_m]\}$ are both bases for
$\mathrm{K}_0(\modd\Lambda)$ (see \cite[Theorem $\mathrm{VIII}$.2.1]{ARS}). Under this setting, one can be define the Coxeter transformation as the group isomorphism
 \begin{align*}
c:\mathrm{K}_0(\modd\Lambda)&\to\mathrm{K}_0(\modd\Lambda).\\
 [P_i]&\mapsto -[I_i]
\end{align*}
It is obvious that $c$ coincides with the Coxeter transformation $\Phi$. In fact, $\Phi(\underline{\dim}P_i)=-\underline{\dim}I_i$, for any $i\in\{1,\dots,m\}$ (see \cite[Lemma $\mathrm{III}$.3.16]{ASS}). For an $n$-hereditary algebra $\Lambda$, the Coxeter matrix and transformation are defined different from the usual cases. In this case due to technical reasons, the multiple $(-1)^n$ is applied instead of the multiple $-1$. Therefore, if $\Lambda$ is an $n$-hereditary algebra then the Coxeter matrix and transformation of $\Lambda$ are defined as follows.
$$\Phi\coloneqq (-1)^nC_\Lambda^t C_\Lambda^{-1}\quad\text{and}\quad c([P_i])\coloneqq (-1)^n[I_i],$$
(for $n$-representation finite and $n$-representation infinite cases, see \cite[Definition 2.1]{M} and \cite[Observation 4.12]{HIO}, respectively). Note that $\Phi$ gives the action of $\nu_n$ on $\mathrm{K}_0(\md_\Lambda)$, where $\mathrm{K}_0(\md_\Lambda)$ is  the Grothendieck group of $\md_\Lambda$ (see \cite[Section $\mathrm{III}.1$]{Ha}). Similar to the usual case, we have the following easy lemma.
\begin{lemma}\label{pi}
Let $\Lambda$ be an $n$-hereditary algebra with the Coxeter matrix  $\,\Phi$. Then $\Phi(\underline{\dim}P_i)=(-1)^n\underline{\dim}I_i$, for any $i\in\{1,\dots,m\}$.	
\end{lemma}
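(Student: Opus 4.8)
The plan is to prove the identity by a direct matrix computation, reducing everything to facts already recorded above; this is the classical argument of \cite[Lemma $\mathrm{III}$.3.16]{ASS} carried out with the normalization $(-1)^n$ in place of $-1$. Two ingredients are available immediately: by Proposition \ref{car} the Cartan matrix $C_\Lambda$ is invertible in $\mathrm{Mat}_m(\mathbb{Z})$, and, as observed just after the definition of the dimension vector, the $i$-th column of $C_\Lambda$ is $\underline{\dim}P_i$. Writing $\varepsilon_i$ for the $i$-th standard basis vector of $\mathbb{Z}^m$, the latter says $C_\Lambda\,\varepsilon_i=\underline{\dim}P_i$, and hence $C_\Lambda^{-1}\,\underline{\dim}P_i=\varepsilon_i$.

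First I would isolate the one genuinely injective input, namely the identity $C_\Lambda^t\,\varepsilon_i=\underline{\dim}I_i$. This is the injective counterpart of the column description of $C_\Lambda$, and it is precisely what the classical formula encodes: in the usual normalization one has $\Phi_{\mathrm{cl}}=-C_\Lambda^t C_\Lambda^{-1}$ together with $\Phi_{\mathrm{cl}}(\underline{\dim}P_i)=-\underline{\dim}I_i$ (see \cite[Lemma $\mathrm{III}$.3.16]{ASS}); substituting $C_\Lambda^{-1}\,\underline{\dim}P_i=\varepsilon_i$ into this yields $-C_\Lambda^t\varepsilon_i=-\underline{\dim}I_i$, i.e. $C_\Lambda^t\,\varepsilon_i=\underline{\dim}I_i$. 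With this in hand the computation is immediate:
\[
\Phi(\underline{\dim}P_i)=(-1)^n C_\Lambda^t C_\Lambda^{-1}\,\underline{\dim}P_i=(-1)^n C_\Lambda^t\,\varepsilon_i=(-1)^n\,\underline{\dim}I_i,
\]
which is the assertion.

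Since every step is a one-line matrix manipulation, there is no serious obstacle here, and the only points that require attention are purely a matter of bookkeeping. One must keep track of the sign, noting that the $n$-hereditary Coxeter matrix $\Phi=(-1)^nC_\Lambda^tC_\Lambda^{-1}$ differs from the classical one only by the scalar $(-1)^{n+1}$; in fact the lemma can then also be obtained in a single line as $\Phi(\underline{\dim}P_i)=(-1)^{n+1}\Phi_{\mathrm{cl}}(\underline{\dim}P_i)=(-1)^{n+1}(-\underline{\dim}I_i)=(-1)^n\underline{\dim}I_i$. One must also be careful that $\underline{\dim}I_i$ is read off as the $i$-th row of $C_\Lambda$ (equivalently the $i$-th column of $C_\Lambda^t$) rather than as a column of $C_\Lambda$; this is the only transpose/duality subtlety, and it is exactly the one pinned down by the conventions fixed in this section.
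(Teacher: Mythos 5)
Your argument is correct: the identity $C_\Lambda^t\varepsilon_i=\underline{\dim}I_i$ combined with $C_\Lambda^{-1}\underline{\dim}P_i=\varepsilon_i$ (or, equivalently, the one-line rescaling $\Phi=(-1)^{n+1}\Phi_{\mathrm{cl}}$) gives the claim immediately. The paper itself offers no written proof, presenting the lemma as an easy adaptation of \cite[Lemma $\mathrm{III}$.3.16]{ASS} with the sign $(-1)^n$ in place of $-1$, which is exactly the computation you carry out.
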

Motivated by Auslander and Platzeck's results in \cite[Proposition 2.2]{AP}, we have the following facts.
\begin{proposition}\label{pro04}
Let $\Lambda$ be an $n$-hereditary algebra. Then the following hold.
\begin{itemize}
\item[(a)]
If $X\in\modd\Lambda$, then $c([X])= \sum_{i=0}^n(-1)^{i+n}[D\Ext_\Lambda ^i(X, \Lambda)]$. 
\item[(b)] 
If $X\in\mcc^0$, then $c([X])=[\tau_nX]+(-1)^n[D\Hom_\Lambda(X, \Lambda)]$.
\item[(c)] 
If $X\in\mcc^0$ is without projective direct summands, then $c([X])=[\tau_nX]$.
\item[(d)] 
If $X\in\mcc^0$ is projective, then $c([X])=(-1)^n[D\Hom_\Lambda(X, \Lambda)]$.
\item[(e)]
If $X\in\modd\Lambda$, then $c^{-1}([X]) = \sum_{i=0}^n(-1)^{i+n}[ \Ext_{\Lambda^{\mathrm{op}}}^i(DX,\Lambda)]$. 
\item[(f)] 
If $X\in\mcc^0$, then $c^{-1}([X])=[\tau_n^-X]+(-1)^n[\Hom_{\Lambda^{\mathrm{op}}}(DX, \Lambda)]$.
\item[(g)] 
If $X\in\mcc^0$ is without injective direct summands, then $c^{-1}([X])=[\tau_n^-X]$. 
\item[(h)] 
If $X\in\mcc^0$ is injective, then $c^{-1}([X])=(-1)^n[\Hom_{\Lambda^{\mathrm{op}}}(DX, \Lambda)]$.
\end{itemize}
\end{proposition}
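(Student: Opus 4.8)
The proof rests entirely on part $(\mathrm{a})$: parts $(\mathrm{b})$--$(\mathrm{d})$ are obtained by specializing the formula in $(\mathrm{a})$, and $(\mathrm{e})$--$(\mathrm{h})$ are its dual counterparts. The plan for $(\mathrm{a})$ is to compute $c([X])$ through a projective resolution of $X$, in the spirit of Auslander and Platzeck, using that $c$ is determined on projectives by $c([P_i])=(-1)^n[I_i]$ together with the identity $I_i=\nu P_i=D\Hom_\Lambda(P_i,\Lambda)$.

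Concretely, since $\gd\Lambda\le n$ there is a projective resolution
$$0\to P^n\to\cdots\to P^1\to P^0\to X\to 0,$$
so that $[X]=\sum_{j=0}^n(-1)^j[P^j]$ in $\mathrm{K}_0(\modd\Lambda)$. Applying the group homomorphism $c$ and using $c([P^j])=(-1)^n[\nu P^j]$ (by additivity of $c$ and $\nu$) gives $c([X])=(-1)^n\sum_{j=0}^n(-1)^j[\nu P^j]$. On the other hand, $\Hom_\Lambda(P^\bullet,\Lambda)$ is a bounded complex of right $\Lambda^{\mathrm{op}}$-modules with $\mathrm{H}^i\cong\Ext^i_\Lambda(X,\Lambda)$, so its Euler characteristic in $\mathrm{K}_0(\modd\Lambda^{\mathrm{op}})$ reads $\sum_{j}(-1)^j[\Hom_\Lambda(P^j,\Lambda)]=\sum_i(-1)^i[\Ext^i_\Lambda(X,\Lambda)]$. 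Since $D$ is exact it induces a homomorphism on Grothendieck groups; applying it and using $D\Hom_\Lambda(P^j,\Lambda)=\nu P^j$ yields $\sum_j(-1)^j[\nu P^j]=\sum_i(-1)^i[D\Ext^i_\Lambda(X,\Lambda)]$. Combining the two expressions gives $c([X])=\sum_{i=0}^n(-1)^{i+n}[D\Ext^i_\Lambda(X,\Lambda)]$, which is $(\mathrm{a})$. Note that this argument never uses exactness of $\nu$, only that of $D$.

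For $(\mathrm{b})$, recall from the proof of Theorem \ref{ARd1} that $\mcc^0\subseteq\mathscr{X}$, so $\Ext^i_\Lambda(X,\Lambda)=0$ for $0<i<n$ whenever $X\in\mcc^0$; thus only the terms $i=0$ and $i=n$ survive in $(\mathrm{a})$, and since $\tau_nX=D\Ext^n_\Lambda(X,\Lambda)$ these are exactly $(-1)^n[D\Hom_\Lambda(X,\Lambda)]$ and $[\tau_nX]$. Part $(\mathrm{d})$ then follows because a projective $X$ has $\Ext^n_\Lambda(X,\Lambda)=0$, hence $\tau_nX=0$. For $(\mathrm{c})$ I must show $\Hom_\Lambda(X,\Lambda)=0$ when $X\in\mcc^0$ has no projective summand; here I invoke Proposition \ref{tunu}, which gives $\nu_nX\cong\tau_nX\in\modd\Lambda$ on $\mcc^0_P$, so that $\nu_nX$ is concentrated in degree $0$. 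A direct computation of the cohomology of $\nu_nX=D\boldsymbol{\mathrm{R}}\Hom_\Lambda(X,\Lambda)[-n]$, using once more that $\Ext^i_\Lambda(X,\Lambda)=0$ for $0<i<n$, shows $\mathrm{H}^0(\nu_nX)\cong\tau_nX$ and $\mathrm{H}^n(\nu_nX)\cong D\Hom_\Lambda(X,\Lambda)$. Concentration in degree $0$ forces $D\Hom_\Lambda(X,\Lambda)=0$, hence $\Hom_\Lambda(X,\Lambda)=0$, and $(\mathrm{c})$ follows from $(\mathrm{b})$.

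The statements $(\mathrm{e})$--$(\mathrm{h})$ are proved dually: one replaces the projective resolution of $X$ by an injective coresolution, passes to $\Lambda^{\mathrm{op}}$ via the duality $D$ (using $\nu^{-1}I_i=P_i=\Hom_{\Lambda^{\mathrm{op}}}(DI_i,\Lambda)$ and $c^{-1}([I_i])=(-1)^n[P_i]$), and reads off $c^{-1}([X])=\sum_{i=0}^n(-1)^{i+n}[\Ext^i_{\Lambda^{\mathrm{op}}}(DX,\Lambda)]$. For $(\mathrm{f})$ one uses $\mcc^0\subseteq\mathscr{Y}$ together with the standard isomorphism $\Ext^i_\Lambda(D\Lambda,X)\cong\Ext^i_{\Lambda^{\mathrm{op}}}(DX,\Lambda)$ to kill the middle terms, while $(\mathrm{g})$ and $(\mathrm{h})$ use the other half of Proposition \ref{tunu} (the equivalence $\nu_n^{-1}=\tau_n^-$ on $\mcc^0_I$) and the vanishing $\tau_n^-X=\Ext^n_{\Lambda^{\mathrm{op}}}(DX,\Lambda)=0$ for injective $X$. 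The main technical point to get right is the sign-and-degree bookkeeping in passing from the cochain complex $\Hom_\Lambda(P^\bullet,\Lambda)$ through $D$ to $\nu P^\bullet$, and in locating the two surviving cohomologies of $\nu_nX$; this is precisely what makes the vanishing arguments in $(\mathrm{c})$ and $(\mathrm{g})$ work.
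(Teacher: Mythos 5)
Your proposal is correct, and its overall architecture (prove (a) via a projective resolution, specialize to get (b)--(d), dualize for (e)--(h)) matches the paper's. There are, however, two places where your execution genuinely diverges. In part (a) the paper works through the syzygies $K_i=\ker f_i$ explicitly: it derives the four-term exact sequences $0\to K_{i-1}^*\to P_i^*\to K_i^*\to\Ext^1_\Lambda(K_{i-1},\Lambda)\to 0$, identifies $\Ext^1_\Lambda(K_i,\Lambda)\cong\Ext^{i+2}_\Lambda(X,\Lambda)$ by dimension shifting, and telescopes in $\mathrm{K}_0(\modd\Lambda)$; you instead invoke the Euler-characteristic identity $\sum_j(-1)^j[\Hom_\Lambda(P^j,\Lambda)]=\sum_i(-1)^i[\Ext^i_\Lambda(X,\Lambda)]$ for the bounded complex $\Hom_\Lambda(P^\bullet,\Lambda)$ and then push through the exact functor $D$. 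This is the same computation packaged more efficiently; the paper's version is just the hand-unrolled form of your identity. The more substantive difference is in part (c): to get $\Hom_\Lambda(X,\Lambda)=0$ for $X\in\mcc^0_P$, the paper splits into the $n$-representation finite and $n$-representation infinite cases, uses the decomposition $\mcc^0=\mathscr{P}\vee\mathscr{R}\vee\mathscr{I}$ together with the Hom-vanishing statements $\Hom_\Lambda(\mathscr{I},\mathscr{P})=\Hom_\Lambda(\mathscr{R},\mathscr{P})=0$ of Proposition \ref{Thio}, and handles the $n$-preprojective summands separately via Proposition \ref{thio0}; you instead read off that $\mathrm{H}^n(\nu_nX)\cong D\Hom_\Lambda(X,\Lambda)$ and conclude from Proposition \ref{tunu} (which says $\nu_nX\cong\tau_nX$ is a stalk complex in degree $0$) that this top cohomology vanishes. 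Your route avoids the case analysis entirely and treats both halves of the dichotomy uniformly, at the cost of working in $\md_\Lambda$ rather than purely module-theoretically; the degree bookkeeping you flag ($\mathrm{H}^j(\nu_nX)\cong D\Ext^{n-j}_\Lambda(X,\Lambda)$, so the two surviving cohomologies sit in degrees $0$ and $n$) checks out, and since $n\geq 1$ the conclusion follows. The dual statements (e)--(h) go through exactly as you indicate.
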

\begin{proof}
We only prove $(\rm{a})$, $(\rm{b})$, $(\rm{c})$ and $(\rm{d})$. The other parts follow by duality.
 
$(\rm{a})$ 
Consider the following projective resolution of $X$.
\begin{equation}
0\to P_n \overset{f_{n}}{\longrightarrow} P_{n-1}\to \cdots\to P_1  \overset{f_1}{\longrightarrow} P_0 \overset{f_0}{\longrightarrow} X\to0.
\end{equation}
We have $[X]= \sum_{i=0}^n(-1)^{i}[P_i]$ in $\mathrm{K}_0(\modd\Lambda)$ (for example see \cite[Proposition $\mathrm{VIII}$.4.1]{Bas}). Put $K_{-1}\coloneqq X$ and $K_i\coloneqq \ker f_i$ for any $i\in\{0,\dots,n-1\}$. Clearly, $K_{n-1}= P_n$. Consider the short exact sequence $0\to K_i\to P_i\to K_{i-1}\to 0$ for any $i\in\{0,\dots,n-1\}$. By applying $(-)^*=\Hom_\Lambda(-,\Lambda)$, we obtain
\begin{align*}
&0\to K_{i-1}^*\to P_i^*\to K_{i}^*\to\Ext^1_\Lambda(K_{i-1},\Lambda)\to \Ext^1_\Lambda(P_i,\Lambda)\to \Ext^1_\Lambda(K_i,\Lambda)\to\Ext^2_\Lambda(K_{i-1},\Lambda)\\
&\to \Ext^2_\Lambda(P_i,\Lambda)\to \Ext^2_\Lambda(K_i,\Lambda)\to\Ext^3_\Lambda(K_{i-1},\Lambda)\to\cdots
\end{align*}
It is obvious that $\Ext^j_\Lambda(P_i,\Lambda)=0$, for any $i\in\{0,\dots,n\}$ and $j>0$. Therefore, we have
\begin{align*}
\Ext^j_\Lambda(K_i,\Lambda)\cong\Ext^{j+1}_\Lambda(K_{i-1},\Lambda), \quad \forall\, i\in\{0,\dots,n-1\}\,, j\in\{1,\dots,n-1\},
\end{align*}
and easily we  obtain
\begin{equation}\label{q00}
\Ext^1_\Lambda(K_i,\Lambda)\cong\Ext^{i+2}_\Lambda(X,\Lambda), \quad \forall\, i\in\{0,\dots,n-2\}.
\end{equation}
 Consider the exact sequence $0\to K_{i-1}^*\to P_i^*\to K_{i}^*\to\Ext^1_\Lambda(K_{i-1},\Lambda)\to0$ for any $i\in\{0,\dots,n-1\}$. By applying $D$, we obtain
 $$0\to D\Ext^1_\Lambda(K_{i-1},\Lambda)\to DK_{i}^*\to DP_i^*\to DK_{i-1}^*\to0.$$
 So we have
 \begin{equation}\label{q01}
[DP_i^*]=[DK_{i-1}^*]+[DK_{i}^*]- [D\Ext^1_\Lambda(K_{i-1},\Lambda)] \,\,\text{in}\,\, \mathrm{K}_0(\modd\Lambda),\quad \forall\, i\in\{0,\dots,n-1\}. 
\end{equation}
Therefore,
\begin{alignat*}{2}
c([X]) &=c\big( \sum_{i=0}^n(-1)^{i}[P_i]\big) &\\
&=(-1)^n\big( \sum_{i=0}^n(-1)^{i}[I_i]\big)&\text{\quad (By Lemma \ref{pi})}\\
& =(-1)^n\big( \sum_{i=0}^n(-1)^{i}[DP_i^*]\big)&\\
& =(-1)^n\Big( \sum_{i=0}^{n-1}(-1)^{i}\big([DK_{i-1}^*]+[DK_{i}^*]- [D\Ext^1_\Lambda(K_{i-1},\Lambda)]\big)+(-1)^n[DP_n^*]\Big) & \text{\quad(By \eqref{q01})}\\
&=(-1)^n\Big((-1)^{0}\big([DK_{-1}^*]+[DK_{0}^*]- [D\Ext^1_\Lambda(K_{-1},\Lambda)]\big)+(-1)^{1}\big([DK_{0}^*]&\\
&+[DK_{1}^*]- [D\Ext^1_\Lambda(K_{0},\Lambda)]\big)+\cdots+(-1)^{n-1}\big([DK_{n-2}^*]+[DK_{n-1}^*]&\\
&- [D\Ext^1_\Lambda(K_{n-2},\Lambda)]\big)+(-1)^n[DP_n^*]\Big) &\\
& =(-1)^n\big([DX^*]-[D\Ext^1_\Lambda(X,\Lambda)]+ \sum_{i=0}^{n-2}(-1)^{i+2} [D\Ext^1_\Lambda(K_{i},\Lambda)]\big)&\\
& = (-1)^n\big([DX^*]-[D\Ext^1_\Lambda(X,\Lambda)]+ \sum_{i=0}^{n-2}(-1)^{i+2} [D\Ext^{i+2}_\Lambda(X,\Lambda)]\big) &\text{\quad (By \eqref{q00})}\\
&= (-1)^n\big([DX^*]-[D\Ext^1_\Lambda(X,\Lambda)]+ \sum_{i=2}^{n}(-1)^{i} [D\Ext^{i}_\Lambda(X,\Lambda)]\big)&\\
&=(-1)^n\big(\sum_{i=0}^{n}(-1)^{i} [D\Ext^{i}_\Lambda(X,\Lambda)]\big) &\\
&=\sum_{i=0}^{n}(-1)^{i+n} [D\Ext^{i}_\Lambda(X,\Lambda)].&
\end{alignat*}

$(\rm{b})$ By Proposition \ref{pic}, we have $\Ext^{i}_\Lambda(X,\Lambda)=0$ for any $i\in\{1,\dots,n-1\}$ and so the result follows.

$(\rm{c})$
Assume that $\Lambda$ is $n$-representation infinite. Then by Proposition \ref{Thio}$(\rm{f})$, $\mcc^0=\mathscr{P}\vee\mathscr{R}\vee\mathscr{I}$. If $X\in\mathscr{R}\vee\mathscr{I}$, then the result follows from parts $\rm{d}$ and $\rm{e}$ of Proposition \ref{Thio} and the previous part. Assume that $X\in\mathscr{P}$ is without projective direct summands. Consider the decomposition $X=\bigoplus_{i=1}^sX_i$ of $X$ into indecomposable summands. $X_i$ is non-projective for any $i\in\{1,\dots,s\}$. So by Theorem \ref{n-ext}$(\mathrm{a})$, there are an integer $t_i> 0$ and an indecomposable projective $\Lambda$-module $P_{i_j}$ such that $X_i\cong\nu_n^{-{t_i}}P_{i_j}\cong\tau_n^{-{t_i}}P_{i_j}$.   By Proposition \ref{thio0}, we have $$\Hom_\Lambda(X,\Lambda)=\bigoplus_{i=1}^s\Hom_\Lambda(\tau_n^{-{t_i}}P_{i_j},\Lambda)=\bigoplus_{i=1}^s\Hom_{\md_\Lambda}(\nu_n^{-{t_i}}P_{i_j},\Lambda)=0,$$ and the result follows from the previous part. Assume that $\Lambda$ is $n$-representation finite. Then by Theorem \ref{th1}, $\mcc ^0=\mathscr{P}$ and the result follows similarly.

$(\rm{d})$ $X$ is projective and so $\tau_nX=0$. Then the result follows from the part $(\rm{b})$.
\end{proof}

On the other hand, we know that $\Phi$ gives the action of $\nu_n$ on the Grothendieck group $\mathrm{K}_0(\md_\Lambda)$. Therefore, we have the following proposition.
\begin{proposition}\label{pro05}
Let $\Lambda$ be an $n$-hereditary algebra and $X\in\ind\mcc^0$. Then the following hold.
\begin{itemize}
\item[(a)]
If $X$ is non-projective, then $\Phi(\underline{\dim}X)=\underline{\dim}(\tau_nX)$. 
\item[(b)] 
If $X$ is non-injective, then $\Phi^{-1}(\underline{\dim}X)=\underline{\dim}(\tau_n^-X)$. 
\end{itemize}
\end{proposition}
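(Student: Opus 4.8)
The plan is to read off both statements directly from Proposition \ref{pro04}, after transporting everything to $\mathbb{Z}^m$ via dimension vectors. The key point already recorded in the text is that, under the group isomorphism $\mathrm{K}_0(\modd\Lambda)\cong\mathbb{Z}^m$ given by $[Y]\mapsto\underline{\dim}Y$, the Coxeter transformation $c$ is intertwined with the Coxeter matrix $\Phi$ (and hence $c^{-1}$ with $\Phi^{-1}$). So once I express $[\tau_nX]$ in terms of $c([X])$, passing to dimension vectors will finish the argument.

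For part (a), the first step is to observe that since $X\in\ind\mcc^0$ is non-projective, it is an indecomposable module that is not projective and therefore has no non-zero projective direct summand; thus $X\in\mcc^0$ is without projective direct summands. This is exactly the hypothesis of Proposition \ref{pro04}(c), which gives $c([X])=[\tau_nX]$ in $\mathrm{K}_0(\modd\Lambda)$. Applying the isomorphism $[Y]\mapsto\underline{\dim}Y$ and using that $c$ corresponds to $\Phi$, this identity becomes $\Phi(\underline{\dim}X)=\underline{\dim}(\tau_nX)$, as claimed. Part (b) is formally dual: since $X$ is indecomposable and non-injective it has no non-zero injective direct summand, so Proposition \ref{pro04}(g) yields $c^{-1}([X])=[\tau_n^-X]$, and transporting across the dimension-vector isomorphism (under which $c^{-1}$ corresponds to $\Phi^{-1}$) produces $\Phi^{-1}(\underline{\dim}X)=\underline{\dim}(\tau_n^-X)$.

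There is essentially no hard step here: the real content was carried out in Proposition \ref{pro04}, and what remains is bookkeeping, the only subtlety being the elementary remark that an indecomposable non-projective (respectively non-injective) object has no projective (respectively injective) summand, which is immediate from indecomposability. If one prefers a more intrinsic route that matches the sentence preceding the statement, I would instead argue as follows: since $X\in\ind\mcc^0$ is non-projective it lies in $\mcc^0_P$, so $\tau_nX\cong\nu_nX$ by Proposition \ref{tunu}; because $\Phi$ implements the action of $\nu_n$ on $\mathrm{K}_0(\md_\Lambda)$ and both $X$ and $\nu_nX=\tau_nX$ are honest modules, i.e. stalk complexes concentrated in degree $0$, taking dimension vectors of the identity $\Phi([X])=[\nu_nX]=[\tau_nX]$ gives the result, and dually for (b).
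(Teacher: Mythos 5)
Your proposal is correct, and in fact your ``more intrinsic route'' at the end is word-for-word the paper's own proof: the paper simply notes that $X$ non-projective and indecomposable lies in $\mcc^0_P$, invokes Proposition \ref{tunu} to get $\nu_nX\cong\tau_nX$, and concludes $\Phi(\underline{\dim}X)=\underline{\dim}(\nu_nX)=\underline{\dim}(\tau_nX)$ from the fact that $\Phi$ realizes the action of $\nu_n$ on $\mathrm{K}_0(\md_\Lambda)$. Your primary route, via Proposition \ref{pro04}(c) and (g) together with the observation that $c$ and $\Phi$ are intertwined by $[Y]\mapsto\underline{\dim}Y$ (which holds because both send the basis element attached to $P_i$ to $(-1)^n$ times the one attached to $I_i$, by definition of $c$ and Lemma \ref{pi}), is a valid alternative: it replaces the derived-category input by the $\mathrm{K}_0$-level computation already carried out with projective resolutions in Proposition \ref{pro04}(a)--(c), at the cost of being less direct. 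There is no circularity, since Proposition \ref{pro04} is proved independently of Proposition \ref{pro05}. The only point either route needs beyond bookkeeping is the elementary remark you make explicitly, that an indecomposable non-projective (resp.\ non-injective) module has no non-zero projective (resp.\ injective) direct summand, so that Proposition \ref{tunu}, or Proposition \ref{pro04}(c)/(g), applies.
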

\begin{proof}
We only prove $(\rm{a})$, the proof of $(\rm{b})$ is similar. Proposition \ref{tunu} implies that $\nu_n X\cong\tau_n X$. Therefore, we have
$$\Phi(\underline{\dim}X)=\underline{\dim}(\nu_nX)=\underline{\dim}(\tau_nX).$$
\end{proof}
As a consequence of Propositions \ref{pro04} and \ref{pro05}, we have the following fact that is well-known for $n=1$ (see, \cite[Proposition $\rm{XI}.1.1.2$]{SS}).
\begin{proposition}\label{fim}
Let $\Lambda$ be an $n$-representation infinite algebra. 
\begin{itemize}
\item[(a)]
If $X\in\ind(\mathscr{I}\vee\mathscr{R})$, then for any $t\geq 0$ we have 
$\Phi^t(\underline{\dim}X)=\underline{\dim}(\tau_n^tX)$.
\item[(b)] 
If $X\in\ind(\mathscr{P}\vee\mathscr{R})$, then for any $t\geq 0$ we have 
$\Phi^{-t}(\underline{\dim}X)=\underline{\dim}(\tau_n^{-t}X)$.
\end{itemize}
\end{proposition}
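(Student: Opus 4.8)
The plan is to reduce the iterated statement to the single-step formula already available in Proposition \ref{pro05}, using induction on $t$ together with the key fact that the relevant subcategories are closed under $\tau_n$ (respectively $\tau_n^-$) in the $n$-representation infinite setting. I will prove part (a); part (b) is entirely dual. The base case $t=0$ is trivial since $\Phi^0 = 1$ and $\tau_n^0 X = X$. For the inductive step, suppose $\Phi^t(\underline{\dim}X) = \underline{\dim}(\tau_n^t X)$ for some $t\geq 0$ and for every $X\in\ind(\mathscr{I}\vee\mathscr{R})$.

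The essential observation is that Proposition \ref{tunu} guarantees, in the $n$-representation infinite case, mutually quasi-inverse equivalences restricting to $\mathscr{R}\xrightarrow{\tau_n}\mathscr{R}$ and $\mathscr{I}\xrightarrow{\tau_n}\mathscr{I}_I$; in particular $\tau_n$ sends $\ind(\mathscr{I}\vee\mathscr{R})$ into itself, so each iterate $\tau_n^t X$ remains an indecomposable object of $\mathscr{I}\vee\mathscr{R}$. Crucially, every object of $\mathscr{I}\vee\mathscr{R}$ is non-projective: by Proposition \ref{Thio}$(\mathrm{d})$ we have $\mathscr{P}\cap\mathscr{I}=0$, and the indecomposable projectives lie in $\mathscr{P}$, while the equivalence $\mathscr{R}\xrightarrow{\tau_n}\mathscr{R}$ forces no object of $\mathscr{R}$ to be projective (a projective would be killed by $\tau_n$, contradicting the equivalence). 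Hence Proposition \ref{pro05}$(\mathrm{a})$ applies to $\tau_n^t X$, giving
$$\Phi\bigl(\underline{\dim}(\tau_n^t X)\bigr) = \underline{\dim}\bigl(\tau_n(\tau_n^t X)\bigr) = \underline{\dim}(\tau_n^{t+1} X).$$

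Combining this with the inductive hypothesis yields
$$\Phi^{t+1}(\underline{\dim}X) = \Phi\bigl(\Phi^t(\underline{\dim}X)\bigr) = \Phi\bigl(\underline{\dim}(\tau_n^t X)\bigr) = \underline{\dim}(\tau_n^{t+1} X),$$
which closes the induction. The main obstacle, and the only point requiring genuine care, is verifying the invariance claim: that $\ind(\mathscr{I}\vee\mathscr{R})$ is stable under $\tau_n$ and consists of non-projective modules, so that the single-step Proposition \ref{pro05}$(\mathrm{a})$ is legitimately applicable at every stage of the iteration. Once this stability is secured via Proposition \ref{tunu} and Proposition \ref{Thio}$(\mathrm{d})$, the argument is a routine induction. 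Part (b) follows by the symmetric argument, applying the equivalences $\mathscr{P}_P\xrightarrow{\tau_n}\mathscr{P}$ and $\mathscr{R}\xrightarrow{\tau_n^-}\mathscr{R}$ from Proposition \ref{tunu} to see that $\ind(\mathscr{P}\vee\mathscr{R})$ is stable under $\tau_n^-$ and consists of non-injective modules, whence Proposition \ref{pro05}$(\mathrm{b})$ applies at each step.
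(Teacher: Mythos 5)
Your proof is correct and follows essentially the same route as the paper, which deduces $(\mathscr{I}\vee\mathscr{R})\cap\mathscr{P}=0$ from Proposition \ref{Thio}(d),(e), concludes that $\tau_n^tX$ is non-projective for all $t\geq 0$, and then applies Proposition \ref{pro05}(a). You merely make explicit the induction and the $\tau_n$-stability of $\ind(\mathscr{I}\vee\mathscr{R})$ via Proposition \ref{tunu}, and replace the appeal to $\Hom_\Lambda(\mathscr{R},\mathscr{P})=0$ by an equivalent argument that the equivalence $\mathscr{R}\xrightarrow{\tau_n}\mathscr{R}$ cannot kill a nonzero projective; both details are sound.
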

\begin{proof}
We only prove $(\rm{a})$, the proof of $(\rm{b})$ is similar. By parts $(\rm{d})$ and $(\rm{e})$ of Proposition \ref{Thio}, $(\mathscr{I}\vee\mathscr{R})\cap\mathscr{P}=0$. Therefore, $\tau_n^tX$ is non-projective, for any $t\geq 0$ and the result follows from Proposition \ref{pro05}$(\rm{a})$.
\end{proof}
\begin{example}\label{ex01}
	Consider the following non-Dynkin quivers.		
	$$
	\begin{array}{ccc}
		\xymatrix{
			Q_1:\,\bullet\ar@2{->}[rr]& &\bullet \ar[rr]&&\bullet}
		&
		\text{and} 
		& 
		Q_2:\,\xymatrix{
			\bullet\ar@2{->}[rr]& &\bullet}
	\end{array}$$		
	By \cite[Example 2.12]{HIO} the tensor product $KQ_1\otimes_K KQ_2$ is a $2$-representation infinite algebra. By \cite[Lemma 1.3]{Le}, $KQ_1\otimes_K KQ_2\cong KQ/ \mathcal{I}$
	such that $Q$ is the quiver
	$$\xymatrix{
		6 \ar@2{->}[rr]^{x_1}_{y_1}\ar@2{->}[dd]^{v_1}_{w_1}& &5 \ar[rr]^{z_1}\ar@2{->}[dd]^{v_2}_{w_2}&&4\ar@2{->}[dd]^{v_3}_{w_3}\\
		&&&&\\
		3 \ar@2{->}[rr]^{x_2}_{y_2} && 2 \ar[rr]^{z_2}&&1}$$ 
	and $\mathcal{I}$ is the ideal generated by the relations
	$x_1w_2=w_1x_2,\, x_1v_2=w_1y_2,\, y_1w_2=v_1x_2,\, y_1v_2=v_1y_2,\, z_1v_3=w_2z_2$ and $z_1v_3=v_2z_2$. 
	 The Cartan matrix and the Coxeter matrix of $\Lambda$ are the matrices
	 $$\begin{array}{cc}
	     C_\Lambda =\begin{bmatrix}                                                                                     
	 		1&1&2&2&2&4\\
	 		0&1&2&0&2&4\\
	 		0&0&1&0&0&2\\
	 		0&0&0&1&1&2\\
	 		0&0&0&0&1&2\\
	 		0&0&0&0&0&1
	 	\end{bmatrix}, & 	\Phi =(-1)^2C_\Lambda^t C_\Lambda^{-1}=\begin{bmatrix}                                                                                     
	 					1&-1&0&-2&2&0\\
	 					1&0&-2&-2&0&4\\
	 					2&0&-3&-4&0&6\\
	 					2&-2&0&-3&3&0\\
	 					2&0&-4&-3&0&6\\
	 					4&0&-6&-6&0&9
	 				\end{bmatrix}.
	   \end{array}$$
We know that $\mathscr{I}=\add\{\tau_2^{i}(D\Lambda)\,|\,i\geq 0\}$. We can apply Proposition \ref{pro05} and compute $\underline{\dim}(\tau_2^iX)$, for any $X\in\ind\mathscr{I}$ and $i\geq 0$. For instance,
 \begin{align*}
 \underline{\dim}I_1&=\begin{bmatrix}                  
 	1\\
 	1\\
 	2\\
 	2\\
 2\\
 4
 \end{bmatrix}
 \overset{\Phi}{\Rightarrow}	
  \underline{\dim}(\tau_2I_1)=\begin{bmatrix}                                 	0\\
  	9\\
  	12\\
  	0\\
  	12\\
 16
 \end{bmatrix}\overset{\Phi}{\Rightarrow}
 \underline{\dim}(\tau_2^2I_1)=\begin{bmatrix}                                                                                     
 	15\\
 	40\\
 	60\\
 	18\\
 	48\\
 	72
 \end{bmatrix}\overset{\Phi}{\Rightarrow}
 \underline{\dim}(\tau_2^3I_1)=\begin{bmatrix}                                                                                     
 	35\\
 	147\\
 	210\\
 	40\\
 	168\\
 	240
 \end{bmatrix}\overset{\Phi}{\Rightarrow}\cdots	\\
 \underline{\dim}I_2&=\begin{bmatrix}                                                                                     
 	0\\
 	1\\
 	2\\
 	0\\
 	2\\
 	4
 \end{bmatrix}
 \overset{\Phi}{\Rightarrow}	
 \underline{\dim}(\tau_2I_2)=\begin{bmatrix}                                 	3\\
 	12\\
 	18\\
 	4\\
 	16\\
 	24
 \end{bmatrix}\overset{\Phi}{\Rightarrow}
 \underline{\dim}(\tau_2^2I_2)=\begin{bmatrix}                                                                                     
 	15\\
 	55\\
 	80\\
 	18\\
 	66\\
 	96
 \end{bmatrix}\overset{\Phi}{\Rightarrow}\cdots	\\
 \underline{\dim}I_3&=\begin{bmatrix}                                                                                     
 	0\\
 	0\\
 	1\\
 	0\\
 	0\\
 	2
 \end{bmatrix}
 \overset{\Phi}{\Rightarrow}	
 \underline{\dim}(\tau_2I_3)=\begin{bmatrix}                                 	0\\
 	6\\
 	9\\
 	0\\
 	8\\
 	12
 \end{bmatrix}\overset{\Phi}{\Rightarrow}
 \underline{\dim}(\tau_2^2I_3)=\begin{bmatrix}                                                                                     
 	10\\
 	30\\
 	45\\
 	12\\
 	36\\
 	54
 \end{bmatrix}\overset{\Phi}{\Rightarrow}\cdots\\
 \underline{\dim}I_4&=\begin{bmatrix}                                                                                     
 	0\\
 	0\\
 	0\\
 	1\\
 	1\\
 	2
 \end{bmatrix}
 \overset{\Phi}{\Rightarrow}	
 \underline{\dim}(\tau_2I_4)=\begin{bmatrix}                                 	0\\
 	6\\
 	8\\
 	0\\
 	9\\
 	12
 \end{bmatrix}\overset{\Phi}{\Rightarrow}
 \underline{\dim}(\tau_2^2I_4)=\begin{bmatrix}                                                                                     
 	12\\
 	32\\
 	48\\
 	15\\
 	40\\
 	60
 \end{bmatrix}\overset{\Phi}{\Rightarrow}\cdots\\
  \underline{\dim}I_5&=\begin{bmatrix}                                                                                     
 	0\\
 	0\\
 	0\\
 	0\\
 	1\\
 	2
 \end{bmatrix}
 \overset{\Phi}{\Rightarrow}	
 \underline{\dim}(\tau_2I_5)=\begin{bmatrix}                                 	2\\
 	8\\
 	12\\
 	3\\
 	12\\
 	18
 \end{bmatrix}\overset{\Phi}{\Rightarrow}
 \underline{\dim}(\tau_2^2I_5)=\begin{bmatrix}                                                                                     
 	12\\
 	44\\
 	64\\
 	15\\
 	55\\
 	80
 \end{bmatrix}\overset{\Phi}{\Rightarrow}\cdots
  \end{align*}
\begin{align*}
 \underline{\dim}I_6&=\begin{bmatrix}                                                                                     
 	0\\
 	0\\
 	0\\
 0\\
 	0\\
 	1
 \end{bmatrix}
 \overset{\Phi}{\Rightarrow}	
 \underline{\dim}(\tau_2I_6)=\begin{bmatrix}                                 	0\\
 	4\\
 	6\\
 	0\\
 	6\\
 	9
 \end{bmatrix}\overset{\Phi}{\Rightarrow}
 \underline{\dim}(\tau_2^2I_6)=\begin{bmatrix}                                                                                     
 	8\\
 	24\\
 	36\\
 	10\\
 	30\\
 	45
 \end{bmatrix}\overset{\Phi}{\Rightarrow}\cdots
 \end{align*}
	\end{example}
	
\section{Modules determined by their composition factors}\label{s5}
In this section, first we recall the concept of quadratic forms. Then we show that indecomposable modules in $\mathscr{P}\vee\mathscr{I}$ are uniquely determined up to isomorphism by their dimension vectors for an $n$-hereditary algebra $\Lambda$ with odd $n$.
\begin{definition}[\protect{\cite[Definition $\mathrm{III}$.3.11]{ASS}}]
Let $\Lambda$ be a basic finite dimensional $K$-algebra with $\gd\Lambda<\infty$ and Cartan matrix $C_\Lambda$. The \textit{Euler characteristic} of $\Lambda$ is the following $\mathbb{Z}$-bilinear form.
\begin{align*}
\langle-,-\rangle:\mathbb{Z}^m\times\mathbb{Z}^m&\to\mathbb{Z}\\
 (x,y)&\mapsto x^t(C_\Lambda^{-1})^t y
\end{align*}
Also, the corresponding quadratic form to $\langle-,-\rangle$ is called \textit{the Euler quadratic form} of $\Lambda$ and is defined as follows.
\begin{align*}
q:\mathbb{Z}^m&\to\mathbb{Z}\\
 x&\mapsto \langle x,x\rangle
\end{align*}
\end{definition}

On the other hand, another quadratic form corresponds to any finite dimensional $K$-algebra of finite global dimension. Let $\gd\Lambda\leq n$. For any $X,Y\in\modd\Lambda$, set $$B(X,Y)\coloneqq\sum_{i=0}^n(-1)^i\dim_K\Ext^i_\Lambda(X,Y).$$ It is easy to see that
\begin{align}\label{tag1}
B:\mathrm{K}_0(\modd\Lambda)\times\mathrm{K}_0(\modd\Lambda)&\to\mathbb{Z},\\
\notag
 ([X],[Y])&\mapsto B(X,Y)
\end{align}
is a well-defined bilinear form. The quadratic form associated
with $B$ is
\begin{align*}
\chi:\mathrm{K}_0(\modd\Lambda)&\to\mathbb{Z},\\
 x&\mapsto B(x,x)
\end{align*}
which is called the \textit{homological
quadratic form} of $\Lambda$ (see \cite[Section $\mathrm{VIII}$.3]{ARS}). This form was introduced by Ringel \cite{Ri}.

Now, we recall a useful relationship between the homological
quadratic form and the Euler quadratic form in the following theorem.
\begin{proposition}[\protect{\cite[Proposition $\mathrm{III}$.3.13)]{ASS}}]\label{th2}
Let $\gd\Lambda<\infty$. Then, the following identities hold for any $X,Y\in\modd\Lambda$.
\begin{enumerate}
\item[(a)]
$\langle\underline{\dim}X,\underline{\dim}Y\rangle=B([X],[Y])$.
\item[(b)]
$q(\underline{\dim}X)=\chi([X])$.
\end{enumerate}
\end{proposition}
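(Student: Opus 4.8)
The plan is to prove part~(a) as an identity between two bilinear forms by checking it on a basis, and then obtain part~(b) as its diagonal specialization. Since $\gd\Lambda<\infty$, the Cartan matrix $C_\Lambda$ is invertible by Proposition~\ref{car}, and, as recalled above, $\{[P_1],\dots,[P_m]\}$ is a $\mathbb{Z}$-basis of $\mathrm{K}_0(\modd\Lambda)$ with $\underline{\dim}P_i$ equal to the $i$-th column of $C_\Lambda$. Fix $Y\in\modd\Lambda$. Both assignments $[X]\mapsto\langle\underline{\dim}X,\underline{\dim}Y\rangle$ and $[X]\mapsto B([X],[Y])$ are additive on $\mathrm{K}_0(\modd\Lambda)$: the former because $[X]\mapsto\underline{\dim}X$ is the group isomorphism $\mathrm{K}_0(\modd\Lambda)\cong\mathbb{Z}^m$ and $\langle-,-\rangle$ is $\mathbb{Z}$-bilinear, the latter because $B$ is a well-defined bilinear form as already noted in \eqref{tag1}. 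Hence it suffices to verify $\langle\underline{\dim}P_i,\underline{\dim}Y\rangle=B([P_i],[Y])$ for every $i\in\{1,\dots,m\}$, and then the general case of~(a) follows since $Y$ is arbitrary.

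For the homological side, projectivity of $P_i$ gives $\Ext^k_\Lambda(P_i,Y)=0$ for all $k>0$, so only the degree-zero term survives and, writing $P_i\cong e_i\Lambda$ together with the standard isomorphism $\Hom_\Lambda(e_i\Lambda,Y)\cong Ye_i$, I obtain
$$B([P_i],[Y])=\dim_K\Hom_\Lambda(P_i,Y)=\dim_K Ye_i,$$
which is exactly the $i$-th entry of $\underline{\dim}Y$. For the Euler side, writing $\epsilon_i$ for the $i$-th standard basis vector of $\mathbb{Z}^m$ so that $\underline{\dim}P_i=C_\Lambda\epsilon_i$, the cancellation $C_\Lambda^t(C_\Lambda^{-1})^t=\big(C_\Lambda^{-1}C_\Lambda\big)^t$ yields
$$\langle\underline{\dim}P_i,\underline{\dim}Y\rangle=(C_\Lambda\epsilon_i)^t(C_\Lambda^{-1})^t\,\underline{\dim}Y=\epsilon_i^t\big(C_\Lambda^{-1}C_\Lambda\big)^t\,\underline{\dim}Y=\epsilon_i^t\,\underline{\dim}Y,$$
which is again the $i$-th entry of $\underline{\dim}Y$. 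Both quantities therefore equal $\dim_K Ye_i$, establishing the identity on the basis, hence~(a) in full. Finally, part~(b) is the case $Y=X$ of~(a), namely $q(\underline{\dim}X)=\langle\underline{\dim}X,\underline{\dim}X\rangle=B([X],[X])=\chi([X])$.

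I do not expect a genuine obstacle here, as all the needed ingredients (invertibility of $C_\Lambda$, the projective basis of $\mathrm{K}_0(\modd\Lambda)$, and well-definedness of $B$) are available; the only point requiring care is the bookkeeping of transposes in the Euler form alongside the right-module convention, where the key simplification is precisely $C_\Lambda^t(C_\Lambda^{-1})^t=\mathrm{Id}$. This is what singles out the projective basis as the natural one against which to test, reducing the whole statement to the vanishing of higher $\Ext$ out of a projective and the identification $\Hom_\Lambda(e_i\Lambda,Y)\cong Ye_i$.
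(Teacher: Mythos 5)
Your proof is correct. The paper does not prove this statement at all --- it is quoted verbatim from \cite[Proposition III.3.13]{ASS} --- and your argument is essentially the standard one found there: both sides are bilinear on $\mathrm{K}_0(\modd\Lambda)$, so it suffices to test against the projective basis $\{[P_1],\dots,[P_m]\}$, where the vanishing of higher $\Ext$ out of $P_i=e_i\Lambda$, the identification $\Hom_\Lambda(e_i\Lambda,Y)\cong Ye_i$, and the cancellation $C_\Lambda^t(C_\Lambda^{-1})^t=\mathrm{Id}$ make both sides equal to $\dim_K Ye_i$. The transpose bookkeeping and the diagonal specialization for part (b) are both handled correctly.
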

Let $\Lambda$ be an $n$-hereditary algebra. Clearly, the Euler quadratic form and the homological quadratic form are well-defined for $\Lambda$.
\begin{example}
Let $\Lambda$ be the $2$-representation infinite algebra in Example \ref{ex01}.  
Then the Euler quadratic form $	q:\mathbb{Z}^6\to\mathbb{Z}$ of $\Lambda$ is given by
$$q(x)=x_1^2+x_2^2+x_3^2+x_4^2+x_5^2+x_6^2-x_1x_2-2x_2x_3-2x_1x_4+2x_1x_5-2x_2x_5-x_4x_5+4x_2x_6-2x_3x_6-2x_5x_6,$$
for any $x=(x_1,\dots,x_6)\in \mathbb{Z}^6$.
\end{example}
As in the classical case, we have the following relation between Coxeter matrices and the Euler forms of $n$-hereditary algebras (for the proof of the classical case, see \cite[Lemma $\mathrm{III}$.3.16]{ASS}).
\begin{lemma}\label{x-fi}
Let $\Lambda$ be an $n$-hereditary algebra with the Coxeter matrix $\Phi$. Then the following hold for any $x,y\in\mathbb{Z}^m$.
\begin{enumerate}
\item[(a)]
	$\langle x,y \rangle=(-1)^n\langle\ y,\Phi(x) \rangle=\langle\ \Phi(x),\Phi(y) \rangle$.
	\item[(b)]
	$\langle x,y \rangle=(-1)^n\langle\ \Phi^{-1}(y),x \rangle=\langle\ \Phi^{-1}(x),\Phi^{-1}(y) \rangle$.
\end{enumerate} 
\end{lemma}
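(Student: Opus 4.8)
The plan is to prove the lemma by a direct matrix computation, since both the Euler form and the Coxeter matrix are given explicitly in terms of the Cartan matrix. Write $C\coloneqq C_\Lambda$ throughout and recall the two definitions $\langle x,y\rangle=x^t(C^{-1})^t y=x^t(C^t)^{-1}y$ and $\Phi=(-1)^n C^t C^{-1}$; by Proposition \ref{car} the matrix $C$ is invertible, so $\Phi$ and all the inverses below make sense. The one observation that makes everything fall out cleanly is that $\langle x,y\rangle$ is a scalar, hence equal to its own transpose. Since $\big((C^t)^{-1}\big)^t=C^{-1}$, transposing $x^t(C^t)^{-1}y$ gives the alternative expression
\[
\langle x,y\rangle=y^t C^{-1}x,
\]
which I will use repeatedly to match the two sides.

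First I would establish part (a). For the first identity, I substitute the definition of $\Phi$ and use $(-1)^{2n}=1$ together with the cancellation $(C^t)^{-1}C^t=I$:
\[
(-1)^n\langle y,\Phi(x)\rangle=(-1)^n\,y^t(C^t)^{-1}\Phi x=(-1)^{2n}\,y^t(C^t)^{-1}C^tC^{-1}x=y^tC^{-1}x=\langle x,y\rangle .
\]
For the second identity I first record $\Phi^t=(-1)^n(C^t)^{-1}C$ and then compute
\[
\langle\Phi(x),\Phi(y)\rangle=x^t\Phi^t(C^t)^{-1}\Phi y=x^t(C^t)^{-1}C\,(C^t)^{-1}C^tC^{-1}y=x^t(C^t)^{-1}y=\langle x,y\rangle ,
\]
where the sign factors multiply to $1$ and the factors $(C^t)^{-1}C^t$ and then $CC^{-1}$ cancel successively. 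This gives both equalities of (a), and in particular the $\Phi$-invariance $\langle\Phi(u),\Phi(v)\rangle=\langle u,v\rangle$ for all $u,v$.

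For part (b) the cleanest route is to deduce it from (a) rather than to recompute with $\Phi^{-1}=(-1)^nC(C^t)^{-1}$. The second identity of (b) is immediate from the invariance just noted, applied to $u=\Phi^{-1}(x)$ and $v=\Phi^{-1}(y)$, which yields $\langle x,y\rangle=\langle\Phi^{-1}(x),\Phi^{-1}(y)\rangle$. For the first identity of (b), I apply the first identity of (a) with the roles $x\mapsto\Phi^{-1}(y)$ and $y\mapsto x$, obtaining $\langle\Phi^{-1}(y),x\rangle=(-1)^n\langle x,\Phi\Phi^{-1}(y)\rangle=(-1)^n\langle x,y\rangle$, and multiplying by $(-1)^n$ gives $\langle x,y\rangle=(-1)^n\langle\Phi^{-1}(y),x\rangle$. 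There is no deep obstacle here; the only points requiring care are the bookkeeping of transposes of products (so that $\Phi^t$ is computed correctly) and the appearance of the sign $(-1)^n$, together with the harmless facts $(-1)^{2n}=1$ and $\big((-1)^n\big)^{-1}=(-1)^n$, which are exactly what replaces the single minus sign of the classical statement \cite[Lemma $\mathrm{III}$.3.16]{ASS}.
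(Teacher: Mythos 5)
Your computation is correct: the two identities in (a) follow from the explicit formulas $\langle x,y\rangle=x^t(C^t)^{-1}y=y^tC^{-1}x$ and $\Phi=(-1)^nC^tC^{-1}$ exactly as you carry them out, and deducing (b) formally from (a) is clean and valid. The paper itself offers no proof of this lemma, deferring to the classical case in \cite[Lemma $\mathrm{III}$.3.16]{ASS}; your direct matrix verification is precisely that standard argument with the sign $-1$ replaced by $(-1)^n$, so nothing further is needed.
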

\begin{remark}\label{fo}
Let $\Lambda$ be an $n$-hereditary algebra. Consider the bilinear form $B$. Proposition \ref{pic} implies that
\begin{align}\label{x}
B([X],[Y])&=\sum_{i=0}^n(-1)^i\dim_K\Ext_\Lambda^i(X,Y)\nonumber\\
&=\dim_K\Hom_\Lambda(X,Y)+(-1)^n\dim_K\Ext_\Lambda^n(X,Y),
\end{align}
for any $X\in\mathscr{P}\vee\mathscr{I}$ and $Y\in\mcc^0$ or $X\in\mcc^0$ and $Y\in\mathscr{P}\vee\mathscr{I}$. Specially, if $\Lambda$ is $n$-representation finite, then Theorem \ref{th1} implies that $\mcc ^0=\mathscr{P}=\mathscr{I}$. Therefore, the equality \eqref{x} holds for any $X,Y\in\mcc^0$.
\end{remark}
In the representation theory of hereditary algebras, classifying indecomposable
modules up to isomorphism is one of the most important parts of the theory. As one of the main results, we know that indecomposable preprojective and preinjective
modules over hereditary algebras are uniquely determined by their dimension vectors. Now, we intend to extend this result to $n$-hereditary algebras with odd $n$. In the following theorem, we show that for such algebras, indecomposable modules in $\mathscr{P}\vee\mathscr{I}$ are uniquely determined up to isomorphism by their dimension vectors.

\begin{theorem}\label{dv}
Let $\Lambda$ be an $n$-hereditary algebra with odd $n$. Then the following hold.
\begin{enumerate}
\item[(a)]
For $X,Y\in\ind\mathscr{P}$, $X\cong Y$ if and only if $\underline{\dim}X=\underline{\dim}Y$.
\item[(b)]
For $X,Y\in\ind\mathscr{I}$, $X\cong Y$ if and only if $\underline{\dim}X=\underline{\dim}Y$.
\end{enumerate}
\end{theorem}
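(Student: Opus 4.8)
The plan is to prove part (a); part (b) follows by the dual argument, replacing $\mathscr{P},\tau_n,\Phi,P_i$ throughout by $\mathscr{I},\tau_n^-,\Phi^{-1},I_i$. The idea is to use the Coxeter transformation to transport dimension vectors backward along $\tau_n$ until we reach the ``source'' projective of an indecomposable $n$-preprojective, and then to exploit the sign $(-1)^n=-1$ to detect precisely when we overshoot the projective. The converse direction (isomorphic modules have equal dimension vectors) is trivial, so only the forward direction needs work.

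First I would set up the dictionary. Let $X\in\ind\mathscr{P}$. By Theorem \ref{n-ext}(a) there are a unique integer $s\ge 0$ and a unique indecomposable projective $P_i$ with $X\cong\tau_n^{-s}P_i$, and then $\tau_n^{-j}P_i\cong\tau_n^{s-j}X$ is nonzero and non-projective for $0\le j<s$ (this uses Theorem \ref{n-ext}(a) together with the equivalences of Proposition \ref{tunu}, and Proposition \ref{Thio}(a) in the $n$-representation infinite case). Iterating Proposition \ref{pro05}(a) along these non-projective indecomposable objects of $\mcc^0$ gives $\Phi^{k}(\underline{\dim}X)=\underline{\dim}(\tau_n^{k}X)=\underline{\dim}(\tau_n^{-(s-k)}P_i)$ for $0\le k\le s$. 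In particular $\Phi^{s}(\underline{\dim}X)=\underline{\dim}P_i$ is a nonzero nonnegative vector, while applying $\Phi$ once more and invoking Lemma \ref{pi} with $n$ odd yields $\Phi^{s+1}(\underline{\dim}X)=(-1)^n\underline{\dim}I_i=-\underline{\dim}I_i$, a nonzero vector all of whose entries are $\le 0$.

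Now I would take $X\cong\tau_n^{-s}P_i$ and $Y\cong\tau_n^{-t}P_j$ in $\ind\mathscr{P}$ with $\underline{\dim}X=\underline{\dim}Y=:v$, assuming without loss of generality $s\le t$. Applying $\Phi^{s}$ to $v$ and using the dictionary on both sides (the intermediate translates of $Y$ are non-projective since $t-k\ge 1$ for $k\le s-1$) gives $\underline{\dim}P_i=\Phi^{s}(v)=\underline{\dim}(\tau_n^{-(t-s)}P_j)$. If $t>s$, one further application of $\Phi$ produces on the left-hand side $-\underline{\dim}I_i$, which is nonzero with all entries $\le 0$, and on the right-hand side $\underline{\dim}(\tau_n^{-(t-s-1)}P_j)$, which is nonzero and nonnegative; these cannot be equal, a contradiction. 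Hence $t=s$, so $\underline{\dim}P_i=\underline{\dim}P_j$. Since the $\underline{\dim}P_i$ are the columns of $C_\Lambda$, which is invertible by Proposition \ref{car} and therefore has linearly independent (in particular pairwise distinct) columns, we conclude $i=j$, whence $X\cong\tau_n^{-s}P_i\cong\tau_n^{-s}P_j\cong Y$.

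The step to handle with care is the ``wall'' argument: once $\Phi$ carries a dimension vector onto a projective $\underline{\dim}P_i$, I need the next application to land on a strictly non-positive nonzero vector, so that it can never equal the dimension vector of a genuine module. This is exactly where oddness of $n$ enters, via $(-1)^n=-1$ in Lemma \ref{pi}; for even $n$ the vector $\Phi(\underline{\dim}P_i)=\underline{\dim}I_i$ is again nonnegative and the whole argument collapses, consistent with the hypothesis in the statement. The only other point requiring attention is the legitimacy of iterating Proposition \ref{pro05}(a), i.e. that each intermediate translate of $X$ and of $Y$ is a nonzero non-projective indecomposable object of $\mcc^0$, which is guaranteed by Theorem \ref{n-ext}(a) and Proposition \ref{tunu}.
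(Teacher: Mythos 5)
Your proof is correct, but it takes a genuinely different route from the paper's. The paper also reduces to $X\cong\tau_n^{-i}P$, $Y\cong\tau_n^{-j}Q$, but it pins down $i=j$ homologically: since $n$ is odd, Remark \ref{fo} gives $B([X],[Y])=\dim_K\Hom_\Lambda(X,Y)-\dim_K\Ext^n_\Lambda(X,Y)$, and combining $[X]=[Y]$ with $\Ext^n_\Lambda(X,X)=0$ yields $\chi([X])=\dim_K\End_\Lambda(X)>0$, hence $\Hom_\Lambda(X,Y)\neq 0\neq\Hom_\Lambda(Y,X)$; the derived-category orthogonality of Proposition \ref{thio0} then forces $i=j$, and only at that point does the paper invoke Proposition \ref{pro05} to get $\underline{\dim}P=\underline{\dim}Q$. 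You instead run the Coxeter transformation forward along $\tau_n$ and use the sign flip $\Phi(\underline{\dim}P_i)=-\underline{\dim}I_i$ (Lemma \ref{pi}, odd $n$) as a positivity ``wall'' to rule out $t>s$; this is closer in spirit to the classical Auslander--Platzeck argument and is more elementary, needing only Proposition \ref{pro05}, Lemma \ref{pi}, Theorem \ref{n-ext}(a) and Proposition \ref{tunu}, whereas the paper's route additionally exploits the $n$-rigidity input behind Remark \ref{fo} and yields the extra information that $\Hom_\Lambda(X,Y)$ and $\Hom_\Lambda(Y,X)$ are both nonzero. Both approaches locate the role of oddness correctly (yours in the sign of $\Phi(\underline{\dim}P_i)$, the paper's in the sign of the $\Ext^n$ term of $B$). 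One small indexing slip: you assert that $\tau_n^{-j}P_i$ is non-projective for $0\le j<s$, but at $j=0$ this is $P_i$ itself; what you actually need (and what is true, since $\tau_n^{-j}P_i$ lies in $\mcc^0_P$ for $j\ge 1$ by Proposition \ref{tunu}) is non-projectivity for $1\le j\le s$. This does not affect the validity of the iteration of Proposition \ref{pro05}(a).
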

\begin{proof}
We only prove $(\mathrm{a})$, the proof of $(\mathrm{b})$ is similar.

\noindent
It is clear that if $X\cong Y$, then $\underline{\dim}X=\underline{\dim}Y$. Assume that $\underline{\dim}X=\underline{\dim}Y$.
Since $n$ is odd, by Remark \ref{fo}, we have
\begin{align*}
B([X],[Y])&=\dim_K\Hom_\Lambda(X,Y)-\dim_K\Ext_\Lambda^n(X,Y),\\
B([Y],[X])&=\dim_K\Hom_\Lambda(Y,X)-\dim_K\Ext_\Lambda^n(Y,X),\\
\chi([X])=B([X],[X])&=\dim_K\Hom_\Lambda(X,X)-\dim_K\Ext_\Lambda^n(X,X).
\end{align*}
On the other hand, by Proposition \ref{n-ext}$(\mathrm{a})$, we have $\Ext_\Lambda^n(X,X)=0$ and so
$$\chi([X])=\dim_K\Hom_\Lambda(X,X).$$
By the assumption, $\underline{\dim}X=\underline{\dim}Y$ and so $[X]=[Y]$ (see \cite[Theorem $\mathrm{III}$.3.5]{ASS}). Therefore, we have
\begin{align*}
B([X],[Y])=B([X],[X])=\chi([X]),\\
B([Y],[X])=B([X],[X])=\chi([X]).
\end{align*}
Namely,
\begin{align*}
\dim_K\Hom_\Lambda(X,Y)-\dim_K\Ext_\Lambda^n(X,Y)=\dim_K\Hom_\Lambda(X,X),\\
\dim_K\Hom_\Lambda(Y,X)-\dim_K\Ext_\Lambda^n(Y,X)=\dim_K\Hom_\Lambda(X,X).
\end{align*}
Since $X\neq0$, $\dim_K\Hom_\Lambda(X,X)>0$ and so $\Hom_\Lambda(X,Y)\neq0$ and $\Hom_\Lambda(Y,X)\neq0$. By the assumption, $X,Y\in\ind\mathscr{P}$ and so by Proposition \ref{n-ext}$(\mathrm{a})$, there are unique integers $i,j\geq 0$ and unique indecomposable projective $\Lambda$-modules $P,Q$ such that $X\cong\nu_n^{-i}P\cong\tau_n^{-i}P$ and $Y\cong\nu_n^{-j}Q\cong\tau_n^{-j}Q$. Now, we have
\begin{align*}
0\neq\Hom_\Lambda(X,Y)\cong\Hom_\Lambda(\tau_n^{-i}P,\tau_n^{-j}Q)\cong\Hom_{\md_\Lambda}(\nu_n^{-i}P,\nu_n^{-j}Q),\\
0\neq\Hom_\Lambda(Y,X)\cong\Hom_\Lambda(\tau_n^{-j}Q,\tau_n^{-i}P)\cong\Hom_{\md_\Lambda}(\nu_n^{-j}Q,\nu_n^{-i}P).
\end{align*}
Proposition \ref{thio0} implies that $-i\geq-j$ and $-j\geq-i$. Hence $i=j$ and so $X\cong\nu_n^{-i}P\cong\tau_n^{-i}P$ and $Y\cong\nu_n^{-i}Q\cong\tau_n^{-i}Q$. It is obvious that $\tau_n^{-k} P$ and $\tau_n^{-k} Q$ are non-injective, for any $k$ with $0\leq k<i$. Therefore by Proposition \ref{pro05}$(\mathrm{b})$, we have 
\begin{align*}
\underline{\dim}X=\underline{\dim}\tau_n^{-i}(P)=\Phi^{-i}(\underline{\dim}P),\\
\underline{\dim}Y=\underline{\dim}\tau_n^{-i}(Q)=\Phi^{-i}(\underline{\dim}Q).
\end{align*}
By the assumption, $\underline{\dim}X=\underline{\dim}Y$ so $\Phi^{-i}(\underline{\dim}P)=\Phi^{-i}(\underline{\dim}Q)$. By applying $\Phi^i$, we have $\underline{\dim}P=\underline{\dim}Q$.
 By Proposition \ref{car}, $P\cong Q$ and so $X\cong Y$.
\end{proof}
If we put $n=1$ in the above theorem, then we obtain the classical result for hereditary algebras. In fact, if $\Lambda$ is a hereditary algebra 
	and $X, Y$ are indecomposable $\Lambda$-modules in the preprojective (or preinjective) component, then $X\cong Y$ if and only if $\underline{\dim}X=\underline{\dim}Y$. For the proof of this result we refer to \cite[Corollary $\mathrm{VIII}.2.3$]{ARS} or \cite[Proposition 2.4]{AP}. Note that the proofs of the classical case are heavily based on the properties of hereditary algebras. For example, the fact that submodules of projective modules over a hereditary algebra remain projective is very important in the proofs of the classical case. Our method for the proof of Theorem \ref{dv} is different from the classical approach.

We know that if $\Lambda$ is $n$-representation finite, then $\mcc ^0=\mathscr{P}=\mathscr{I}$. Therefore, as a corollary of Theorem \ref{dv}, all modules in the $n$-cluster tilting subcategory $\mcc^0$ are uniquely determined up to isomorphism by their dimension vectors. This result was also proved using a different method in \cite{Re}. In the other attempt, Mizuno proved this fact for any arbitrary $n$ under a specific condition on $\Lambda$. He assumed that the Gabriel quiver of $\Lambda$ is acyclic (see \cite[Theorem 4.12]{M}). 
\section{Coxeter matrix and $n$-representation finiteness}\label{s6}
In this section, as a consequence of Theorem \ref{dv}, we give a criterion to describe whether an $n$-hereditary algebra is $n$-representation finite. We show that for an $n$-hereditary algebra $\Lambda$ with odd $n$, the condition $\Phi^d = 1$ holds for some positive integer $d$ if and only if $\Lambda$ is $n$-representation finite. First, we prove that one direction of this implication is true for any arbitrary $n$. In fact, if $\Lambda$ is $n$-representation finite then  $\Phi^d=1$ for some positive integer $d$. Then, we prove the converse for the case that $n$ is odd.
\begin{proposition}\label{fi-re} 
	Let $\Lambda$ be an $n$-hereditary algebra. If $\Lambda$ is $n$-representation finite, then there exists a positive integer $d$ such that $\Phi^d=1$.                                     
\end{proposition}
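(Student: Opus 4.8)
The plan is to exploit the fact that in the $n$-representation finite case the subcategory $\mcc^0=\mathscr{P}=\mathscr{I}$ is the unique $n$-cluster tilting subcategory and contains only finitely many indecomposables (Theorem \ref{th1}), so that the $n$-Auslander--Reiten translate $\tau_n$ organizes these indecomposables into finitely many finite orbits. Since $\Phi$ records the action of $\nu_n=\tau_n$ on dimension vectors, proving $\Phi^d=1$ reduces to showing that $\Phi$ permutes, up to a global sign, a $\mathbb{Z}$-basis of $\mathbb{Z}^m$ with finite orbits. I would take $\{\underline{\dim}I_1,\dots,\underline{\dim}I_m\}$ as that basis, which is legitimate because $\{[I_1],\dots,[I_m]\}$ is a basis of $\mathrm{K}_0(\modd\Lambda)$ for algebras of finite global dimension and $\mathrm{K}_0(\modd\Lambda)\cong\mathbb{Z}^m$ via $[X]\mapsto\underline{\dim}X$.

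First I would fix an indecomposable injective $I_i$ and trace its $\tau_n$-orbit. By Proposition \ref{per} there is a permutation $\sigma$ and an integer $t_i\geq 0$ with $\tau_n^{t_i}I_i\cong P_{\sigma(i)}$, and each intermediate module $\tau_n^{k}I_i$ for $0\leq k<t_i$ is indecomposable, lies in $\mcc^0$, and is non-projective (otherwise one more application of $\tau_n$ would annihilate it before reaching the nonzero projective $P_{\sigma(i)}$). Hence Proposition \ref{pro05}$(\mathrm{a})$ applies at every intermediate step and yields $\Phi^{t_i}(\underline{\dim}I_i)=\underline{\dim}P_{\sigma(i)}$. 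Combining this with Lemma \ref{pi}, which gives $\Phi(\underline{\dim}P_{\sigma(i)})=(-1)^n\underline{\dim}I_{\sigma(i)}$, I obtain the single clean relation $\Phi^{t_i+1}(\underline{\dim}I_i)=(-1)^n\underline{\dim}I_{\sigma(i)}$.

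Next I would iterate this relation around each cycle $(i_1\,i_2\,\cdots\,i_r)$ of $\sigma$. Composing the $r$ relations produces $\Phi^{s}(\underline{\dim}I_{i_1})=(-1)^{nr}\underline{\dim}I_{i_1}$ with $s=\sum_{j=1}^{r}(t_{i_j}+1)$, so that $\Phi^{2s}$ fixes $\underline{\dim}I_{i_1}$ and, by cyclic symmetry, every $\underline{\dim}I_{i_j}$ in the cycle; the factor $2$ is present only to absorb the accumulated sign $(-1)^{nr}$. Taking $d$ to be the least common multiple of these exponents over all cycles of $\sigma$ gives a positive integer with $\Phi^d(\underline{\dim}I_i)=\underline{\dim}I_i$ for every $i$. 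Because the $\underline{\dim}I_i$ form a basis of $\mathbb{Z}^m$, this forces $\Phi^d=1$.

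The only delicate point is bookkeeping rather than a genuine obstacle: one must verify that Proposition \ref{pro05}$(\mathrm{a})$ is applicable at each stage (that is, that every module met before the terminal projective is non-projective) and that the single transition from the translation formula of Proposition \ref{pro05}$(\mathrm{a})$ to the projective-to-injective formula of Lemma \ref{pi} occurs exactly once per orbit, at its projective end. Managing the sign $(-1)^n$ uniformly, so that one exponent $d$ works simultaneously across all cycles regardless of the parity of $n$ and of the cycle lengths, is the main thing to get right, and it is settled simply by doubling and passing to a least common multiple. Observe that the argument uses no parity hypothesis on $n$, consistent with the fact that this implication is asserted for arbitrary $n$.
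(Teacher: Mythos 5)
Your proof is correct and follows essentially the same route as the paper: both use Proposition \ref{per}(a) to get $\tau_n^{t_i}I_i\cong P_{\sigma(i)}$, apply Proposition \ref{pro05}(a) along the non-projective part of each orbit and Lemma \ref{pi} at the projective end to obtain $\Phi^{t_i+1}(\underline{\dim}I_i)=(-1)^n\underline{\dim}I_{\sigma(i)}$, and conclude via the basis $\{\underline{\dim}I_1,\dots,\underline{\dim}I_m\}$. The only difference is cosmetic bookkeeping at the end (your doubling and least common multiple over cycles versus the paper's passage to $\alpha=\sigma^2$ and summed exponents), and your version is, if anything, slightly cleaner.
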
  
\begin{proof} 
	Since $\gd\Lambda\leq n$, $\{\underline{\dim}I_1,\dots,\underline{\dim}I_m\}$ is a basis of $\mathbb{Z}^m$. So it is enough to show that there exists a positive integer $d$ such that $\Phi^d(\underline{\dim}I_i)=\underline{\dim}I_i$, for any $i\in\{1,\dots,m\}$. By Proposition \ref{per}$(\rm{a})$, there exist a permutation $\sigma$ on $\{1,\dots,m\} $ and non-negative integers $t_i$ such that $\tau_n^{t_i} I_i\cong P_{\sigma(i)}$, for any $i \in \{1,\dots,m\}$. Therefore, $\underline{\dim}(\tau_n^{t_i} I_i)= \underline{\dim}P_{\sigma(i)}$, for any $i$. For any $i\in \{1,\dots,m\}$ with $t_i=0$, by Lemma \ref{pi}, we have 
	\begin{equation}\label{q0}
		\Phi(\underline{\dim}I_i)=\Phi(\underline{\dim} P_{\sigma(i)})=(-1)^n\underline{\dim} I_{\sigma(i)}.
	\end{equation}
	Assume that $S_m$ is the permutations group on $\{1,\dots,m\}$. The order of any element of $S_m$ is finite, so suppose that the positive integer $r$ is the order of $\sigma$. Namely, $\sigma^r=1$.
	We have
	\begin{equation}\label{q1}
		\Phi^{2r}(\underline{\dim}I_i)=(-1)^{2rn}\underline{\dim} (I_{\sigma^{2r}(i)})=\underline{\dim} I_i.
	\end{equation}
	Now, assume that $i\in \{1,\dots,m\}$ with $t_i\neq0$. It is obvious that $\tau_n^{k} I_i$ is non-projective, for any $k$ with $0\leq k<t_i$. Therefore, by Proposition \ref{pro05}$(\rm{a})$, we have
	$$\Phi^{t_i}(\underline{\dim}I_i)=\underline{\dim}(\tau_n^{t_i} I_i)=\underline{\dim} P_{\sigma(i)}.$$
	By applying $\Phi$ and using Lemma \ref{pi}, we have
	\begin{equation}\label{q03}
		\Phi^{t_i+1}(\underline{\dim}I_i)=\Phi(\underline{\dim} P_{\sigma(i)})=(-1)^n\underline{\dim} I_{\sigma(i)}.
	\end{equation}
	If $t_{\sigma(i)}=0$, by \eqref{q0}, we have
	\begin{equation}\label{q04}
		\Phi^{t_{\sigma(i)}+1}(\underline{\dim}I_{\sigma(i)})=\Phi(\underline{\dim}I_{\sigma(i)})=(-1)^n\underline{\dim}I_{\sigma^2(i)}.
	\end{equation}
	If $t_{\sigma(i)}\neq0$, by \eqref{q03}, we have
	\begin{equation}\label{q05}
		\Phi^{t_{\sigma(i)}+1}(\underline{\dim}I_{\sigma(i)})=(-1)^n\underline{\dim}I_{\sigma^2(i)}.
	\end{equation}
	So we apply $\Phi^{t_{\sigma(i)}+1}$ on \eqref{q03} and by \eqref{q04} and \eqref{q05} we obtain
	\begin{align*}
		\Phi^{t_{\sigma(i)}+1}\big(\Phi^{t_i+1}(\underline{\dim}I_i)\big)&=(-1)^n\Phi^{t_{\sigma(i)}+1}(\underline{\dim} I_{\sigma(i)})\\
		\Phi^{t_{\sigma(i)}+t_i+2}(\underline{\dim}I_i)&=(-1)^n\big((-1)^n\underline{\dim} I_{\sigma^2(i)}\big)=\underline{\dim} I_{\sigma^2(i)}.
	\end{align*}
	Put $h_i\coloneqq t_{\sigma(i)}+t_i+2$ and $\alpha\coloneqq\sigma^2$. So, we have
	$$\Phi^{h_i}(\underline{\dim}I_i)=\underline{\dim} I_{\alpha(i)}.$$
	Suppose that the positive integer $s$ is the order of $\alpha$.
	Put $u_i\coloneqq h_{\alpha^{s-1}(i)}+\cdots+ h_{\alpha(i)}+h_i$. Hence, for any $i\in \{1,\dots,m\}$ with $t_i\neq0$ we have
	\begin{equation}\label{q2}
		\Phi^{u_i}(\underline{\dim}I_i)=\underline{\dim} I_{\alpha^{s}(i)}=\underline{\dim} I_i.
	\end{equation}
		Put $d\coloneqq 2r+\displaystyle\sum_{i=1 \atop t_i\neq 0}^m u_i$. By \eqref{q1} and \eqref{q2}, it is obvious that $\Phi_\Lambda^d(\underline{\dim}I_i)=\underline{\dim}I_i$, for any $i\in \{1,\dots,m\}$ and 
	therefore, $\Phi^d=1$.
\end{proof} 
Note that, according to the above proof, the positive integer $d$ in Proposition \ref{fi-re} is even. Proposition \ref{fi-re} was also proved in \cite[Proposition 3.5]{M} with a quite different technique.                         

Let $\rm{Gl}(m,\mathbb{Z})$ be the group of $m\times m$ invertible matrices with integer entries. The problem of determining the order of an element $\Phi$ in $\rm{Gl}(m,\mathbb{Z})$ has a long history. When $\Phi$ has finite order, its order can be computed by using methods from linear algebra (for example see \cite{KP}).

By Proposition \ref{fi-re}, we give an upper bound on the number of indecomposable modules in the $n$-cluster tilting subcategory $\mcc^0$.
\begin{corollary}\label{co1} 
Let $\Lambda$ be an $n$-representation finite algebra. If $l$ is the number of indecomposable injective-projective $\Lambda$-modules then the number of indecomposable $\Lambda$-modules in $\mcc^0$ is at most $(m-l)d+l$, where $m$ is the number of isomorphism classes of simple $\Lambda$-modules and $d$ is the smallest positive integer that $\Phi^d=1$.
\end{corollary}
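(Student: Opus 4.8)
The plan is to count $\ind\mcc^0$ by organising its objects into $\tau_n$-orbits and bounding the length of each orbit by $d$. Since $\Lambda$ is $n$-representation finite, Theorem \ref{th1} gives $\mcc^0=\mathscr{P}=\mathscr{I}$, which is the unique $n$-cluster tilting subcategory of $\modd\Lambda$, and Proposition \ref{fi-re} supplies a smallest positive integer $d$ with $\Phi^d=1$. By Theorem \ref{n-ext}(a)(i), every $X\in\ind\mcc^0=\ind\mathscr{P}$ can be written uniquely as $X\cong\tau_n^{-k}P$ for a unique indecomposable projective $P$ and a unique integer $k\geq0$. First I would use this to partition $\ind\mcc^0$ into the $\tau_n$-orbits $O_j\coloneqq\{\tau_n^{-k}P_j\,:\,k\geq0,\ \tau_n^{-k}P_j\neq0\}$, one for each indecomposable projective $P_j$; the uniqueness statement guarantees that these $m$ orbits are pairwise disjoint and exhaust $\ind\mcc^0$, so that $|\ind\mcc^0|=\sum_{j=1}^m L_j$, where $L_j\coloneqq|O_j|$.

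Next I would describe each orbit precisely. By Theorem \ref{n-ext}(a)(iii) (and its injective dual in part (b)), the orbit $O_j$ is a finite chain $P_j,\tau_n^-P_j,\dots,\tau_n^{-(L_j-1)}P_j$ whose source is the projective $P_j$ and whose sink $\tau_n^{-(L_j-1)}P_j$ is injective. Consequently $O_j$ has length $L_j=1$ precisely when $P_j$ is simultaneously projective and injective, so there are exactly $l$ orbits of length one, together contributing $l$ objects. It therefore remains to bound the lengths of the remaining $m-l$ orbits.

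For such an orbit, with $P_j$ non-injective, the modules $\tau_n^{-k}P_j$ with $0\leq k\leq L_j-2$ are all non-injective, so iterating Proposition \ref{pro05}(b) yields $\underline{\dim}(\tau_n^{-k}P_j)=\Phi^{-k}(\underline{\dim}P_j)$ for every $0\leq k\leq L_j-1$. The objects of $O_j$ are pairwise non-isomorphic, and since every indecomposable of $\mcc^0$ is determined up to isomorphism by its dimension vector (Theorem \ref{dv}; see also the remark following it and \cite{Re} for arbitrary $n$), the vectors $\Phi^{-k}(\underline{\dim}P_j)$, $0\leq k\leq L_j-1$, are pairwise distinct. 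As $\Phi^d=1$ forces $\Phi^{-d}=1$, the sequence $(\Phi^{-k}(\underline{\dim}P_j))_{k\geq0}$ is periodic with period dividing $d$ and thus takes at most $d$ distinct values; hence $L_j\leq d$. Summing over all orbits gives $|\ind\mcc^0|=\sum_j L_j\leq l\cdot1+(m-l)\cdot d=(m-l)d+l$, as claimed. The main obstacle is the clean identification of the orbit structure --- verifying that exactly $m$ orbits occur, each with a unique projective source and injective sink, and that the length-one orbits correspond precisely to the $l$ injective-projective modules --- after which the bound on each orbit length follows formally from the determination of modules by dimension vectors together with the relation $\Phi^d=1$.
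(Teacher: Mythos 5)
Your orbit decomposition is essentially the paper's, just traversed in the opposite direction: the paper invokes Proposition \ref{per}(a) to write $\ind\mcc^0=\{\tau_n^jI_i\mid 1\le i\le m,\ 0\le j\le t_i\}$ with $\tau_n^{t_i}I_i\cong P_{\sigma(i)}$, so each orbit runs from an injective down to a projective and the $l$ projective--injectives give exactly the singleton orbits, matching your picture. Where you genuinely diverge is in how the orbit lengths are bounded by $d$. The paper simply reads off from the proof of Proposition \ref{fi-re} that the exponent constructed there is a sum containing $t_i+2$ for every $i$ with $t_i\neq 0$, whence $t_i<d$ with no parity hypothesis and no use of dimension vectors. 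You instead argue that the $L_j$ vectors $\Phi^{-k}(\underline{\dim}P_j)$ are pairwise distinct because indecomposables of $\mcc^0$ are determined by their dimension vectors, and then use $\Phi^{-d}=1$ to cap the number of distinct values at $d$. Your route has one real advantage: it yields the bound for the \emph{smallest} $d$ with $\Phi^d=1$, which is what the statement asserts, whereas the paper's appeal to the proof of Proposition \ref{fi-re} literally bounds $t_i$ only by the particular, possibly non-minimal, exponent built there. But it also has a cost that you should not gloss over: determination of indecomposables in $\mcc^0$ by dimension vectors is Theorem \ref{dv}, which this paper proves only for odd $n$, while Corollary \ref{co1} is stated for arbitrary $n$. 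For even $n$ your argument is therefore not self-contained within the paper and must import the result of \cite{Re}, as you acknowledge. If you want a proof valid for all $n$ using only the paper's own results, replace the periodicity-plus-dimension-vector step by the explicit inequality $t_i<d$ extracted from the construction in the proof of Proposition \ref{fi-re}.
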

\begin{proof}	
	By Proposition \ref{per},  $A=\{\tau_n^{j}I_i\,|\,1\leq i\leq m , 0\leq j\leq t_i\}$ is the complete set of the isomorphism classes of indecomposable $\Lambda$-modules of $\mcc^0$ (we recall that $\tau_n^{t_i} I_i\cong P_{\sigma(i)}$, for any $i \in \{1,\dots,m\}$). Assume that $I_i$ is an indecomposable injective non-projective module. Then according to the proof of Proposition \ref{fi-re}, $t_i< d$ and so $A$ has at most $(m-l)d+l$ elements.
\end{proof}
\begin{theorem}\label{fi-fi}
	Let $\Lambda$ be an $n$-hereditary algebra with odd $n$. Then there exists a positive integer $d$ such that $\Phi^d=1$ if and only if $\Lambda$ is $n$-representation finite.
\end{theorem}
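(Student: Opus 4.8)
The plan is to note first that one implication is already settled: Proposition \ref{fi-re} gives that if $\Lambda$ is $n$-representation finite then $\Phi^d=1$ for some positive integer $d$, and this holds for arbitrary $n$. So the real content is the converse for odd $n$: assuming $\Phi^d=1$ for some $d>0$, I want to conclude that $\Lambda$ is $n$-representation finite. I would argue by contradiction using the dichotomy of Theorem \ref{de}. Suppose $\Phi^d=1$ but $\Lambda$ is not $n$-representation finite; then $\Lambda$ is $n$-representation infinite, and I will show this forces infinitely many indecomposables in $\mathscr{P}$ to share the same dimension vector, contradicting Theorem \ref{dv}.

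Concretely, fix an indecomposable projective $\Lambda$-module $P$. In the $n$-representation infinite case, Proposition \ref{Thio}$(\mathrm{a})$ tells us that the modules $\tau_n^{-i}P\cong\nu_n^{-i}P$, for $i\geq 0$, are indecomposable objects of $\mathscr{P}$ that are pairwise non-isomorphic. Since $P\in\ind\mathscr{P}\subseteq\ind(\mathscr{P}\vee\mathscr{R})$, Proposition \ref{fim}$(\mathrm{b})$ converts this into a statement about dimension vectors, namely $\underline{\dim}(\tau_n^{-t}P)=\Phi^{-t}(\underline{\dim}P)$ for all $t\geq 0$. The hypothesis $\Phi^d=1$ gives $\Phi^{-d}=1$, so the sequence $\bigl(\Phi^{-t}(\underline{\dim}P)\bigr)_{t\geq 0}$ depends only on $t$ modulo $d$ and therefore takes at most $d$ distinct values. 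Consequently the infinite family $\{\tau_n^{-i}P\mid i\geq 0\}$ realizes only finitely many dimension vectors, so two of these modules, say $\tau_n^{-i}P$ and $\tau_n^{-j}P$ with $i\neq j$, have equal dimension vectors. By Theorem \ref{dv}$(\mathrm{a})$ (this is exactly where odd $n$ enters) equality of dimension vectors for indecomposables in $\mathscr{P}$ forces an isomorphism $\tau_n^{-i}P\cong\tau_n^{-j}P$, contradicting their pairwise non-isomorphy. Hence $\Lambda$ must be $n$-representation finite.

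The main obstacle, and the reason the statement is restricted to odd $n$, is the reliance on Theorem \ref{dv}: without knowing that indecomposable $n$-preprojective modules are determined up to isomorphism by their dimension vectors, the periodicity of $\Phi$ alone would only bound the number of \emph{dimension vectors}, not the number of isomorphism classes, and the contradiction would collapse. All the remaining ingredients (the dichotomy, the bijection of Proposition \ref{Thio}$(\mathrm{a})$, and the translation formula of Proposition \ref{fim}$(\mathrm{b})$) are formal once that theorem is in hand, so the argument is short; the work has effectively been front-loaded into establishing Theorem \ref{dv}.
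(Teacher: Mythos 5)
Your argument is correct and follows essentially the same route as the paper: both reduce to the $n$-representation infinite case via the dichotomy, translate powers of $\Phi$ into dimension vectors of $\tau_n$-translates, and invoke Theorem \ref{dv} to turn a coincidence of dimension vectors into an isomorphism contradicting Proposition \ref{Thio}. The only cosmetic difference is that you work on the preprojective side with $\tau_n^{-i}P$ and Theorem \ref{dv}(a), while the paper applies $\Phi^d$ directly to $\underline{\dim}I_i$ and uses Theorem \ref{dv}(b) on the preinjective side.
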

\begin{proof}
	Assume that $\Phi^d=1$, for a positive integer $d$. Then $\Phi^d(\underline{\dim}I_i)=\underline{\dim}I_i$, for any $i\in\{1,\dots,m\}$. If $\Lambda$ is $n$-representation infinite, Proposition \ref{fim}$(\rm{a})$ implies that $\Phi^d(\underline{\dim}I_i)=\underline{\dim}(\tau_n^d I_i)$, for any $i$. Therefore, $\underline{\dim}I_i=\underline{\dim}(\tau_n^d I_i)$, for any $i$ and by Theorem \ref{dv}$(\rm{b})$, $I_i\cong \tau_n^d I_i$, for any $i\in\{1,\dots,m\}$. But this contradicts Proposition \ref{Thio}$(\rm{b})$.
	
	The other side follows from Proposition \ref{fi-re} and the proof is complete.
\end{proof}
If we put $n=1$ in Theorem \ref{fi-fi}, we obtain the well-known result for the hereditary case that was proved by Platzeck and Auslander in \cite[Theorem 2.6]{AP}.  
\section{Quadratic forms of $n$-hereditary algebras}\label{s7}
In this section, first we introduce a Grothendieck group corresponding to $\mcc^0$, for an $n$-hereditary algebra $\Lambda$. Then, as an application of Theorem \ref{dv}, we prove that the homological quadratic form of the $n$-representation finite algebra $\Lambda$ is positive definite, for any odd $n$. Furthermore, we show that the converse of this fact does not hold for any arbitrary $n$ by giving counterexamples. 
\subsection{Grothendieck group of $\boldsymbol{\mcc^0}$}\label{g}
In this subsection, first we recall the concept of the Grothendieck group of $n$-abelian categories. For $n$-representation finite $n$-hereditary algebras, $\mcc^0$ is an $n$-abelian category. So, the Grothendieck group of $\mcc^0$ is well-defined. We introduce a Grothendieck group corresponding to $\mcc^0$, for an $n$-hereditary algebra $\Lambda$. We show that it coincides with the defined Grothendieck group in the $n$-representation finite case. Finally, we prove that there is a group isomorphism between Grothendieck group of $\mcc^0$ and the ordinary Grothendieck group of $\Lambda$.

In section \ref{s4}, we recalled the notion of Grothendieck group $\mathrm{K}_0(\mathcal{C})$ of the abelian category $\mathcal{C}$. In fact, the  quotient group
\begin{equation}
	\mathrm{K}_0(\mathcal{C})\coloneqq\mathrm{K}_0(\mathcal{C},0)/\langle[X]_0-[Y]_0+[Z]_0\,|\,0\to X\to Y\to Z\to0\text{ is a short exact sequence in\,\,} \mathcal{C}\rangle\notag
\end{equation}
is called \textit{Grothendieck group} of $\mathcal{C}$, where $\mathrm{K}_0(\mathcal{C},0)$ is the split Grothendieck group of $\mathcal{C}$ and $[X]_0$ is the corresponding element of $X\in\mathcal{C}$ in $\mathrm{K}_0(\mathcal{C},0)$. The above definition is generalized to the $n$-abelian categories. These categories that were introduced by Jasso are analogues of abelian categories from the point of view of higher homological algebra (see \cite{J} for the definition and properties of $n$-abelian categories).
Specially, according to \cite[Theorem 3.16]{J}, any $n$-cluster tilting subcategory $\mathcal{M}$ of $\modd\Lambda$ is $n$-abelian.
To define the Grothendieck group of an $n$-abelian category, we first recall the notion of $n$-exact sequences.

Let $\mathcal{C}$ be an additive category and $d_{n+1}:X_{n+1} \rightarrow X_n$ be a morphism in $\mathcal{C}$. An \textit{$n$-cokernel} of $d_{n+1}$ is a sequence
\begin{equation}
(d_n, \ldots, d_1): X_n \overset{d_n}{\longrightarrow} X_{n-1} \xrightarrow {d_{n-1}}\cdots \overset{d_{2}}{\longrightarrow} X_1 \overset{d_1}{\longrightarrow} X_{0} \notag
\end{equation}
in $\mathcal{C}$ such that for all $Y\in \mathcal{M}$
the induced sequence of abelian groups
\begin{align}
0 \rightarrow \Hom_{\mathcal{C}}(X_0,Y) \rightarrow \Hom_{\mathcal{C}}(X_1,Y) \rightarrow\cdots\rightarrow \Hom_{\mathcal{C}}(X_n,Y) \rightarrow \Hom_{\mathcal{C}}(X_{n+1},Y) \notag
\end{align}
is exact \cite[Definition 2.2]{J}. Dually, an \textit{$n$-kernel} of a morphism in $\mathcal{C}$ is defined. Also, a complex
\begin{equation}
X_{n+1} \xrightarrow {d_{n+1}} X_n \overset{d_n}{\longrightarrow} X_{n-1} \xrightarrow {d_{n-1}}\cdots \overset{d_{2}}{\longrightarrow} X_1 \overset{d_1}{\longrightarrow} X_{0} \notag
\end{equation}
in $\mathcal{C}$ is called an \textit{$n$-exact sequence} if $(d_n, \ldots, d_1)$ is an $n$-cokernel of $d_{n+1}$ and $(d_{n+1}, \ldots, d_2)$ is an $n$-kernel of $d_1$ \cite[Definition 2.4]{J}.
\begin{definition}$($\cite[Definition 1.3]{Re}$)$\label{g1}
Let $\mathcal{C}$ be an $n$-abelian category. Then the \textit{Grothendieck group} of $\mathcal{C}$ is defined as
\begin{eqnarray}
\mathrm{K}_0(\mathcal{C})\coloneqq\mathrm{K}_0(\mathcal{C},0)/\langle\sum^{n+1}_{i=0}(-1)^i[X_i]_0\,|\,0\to X_{n+1}\to\cdots \to X_{0}\to 0 \text{\ is an $n$-exact sequence in $\mathcal{C}$}\rangle. \nonumber
\end{eqnarray}
\end{definition}

Now, let $\Lambda$ be an $n$-hereditary algebra. It is obvious that $\mcc^0$ is an essentially small additive category, so we can consider the split Grothendieck group $\mathrm{K}_0(\mcc^0,0)$ of $\mcc^0$ and define the \textit{Grothendieck group of $\mcc^0$} as the quotient group 
\begin{align*}
\mathrm{K}_0(\mcc^0)\coloneqq\mathrm{K}_0(\mcc^0,0)/\langle & \sum^{n+1}_{i=0}(-1)^i[X_i]_0\,|\,0\to X_{n+1}\to\cdots \to X_{0}\to 0\\
&\text{\,\,is an exact sequence with $n+2$ terms in $\mcc^0$}\rangle.
\end{align*}
\begin{remark}\label{rem11}
Let $\Lambda$ be an $n$-hereditary algebra. The definition of $\mathrm{K}_0(\mcc^0)$ leads to the following results.
\begin{enumerate}
\item
For the case $n=1$, $\mcc^0=\modd\Lambda$ and hence $\mathrm{K}_0(\mcc^0)=\mathrm{K}_0(\modd\Lambda)$.
\item
If $\Lambda$ is an $n$-representation finite algebra, then Theorem \ref{th1} implies that $\mcc^0$ is an $n$-cluster tilting subcategory of $\modd\Lambda$. For the $n$-abelian category $\mcc^0$, by Definition \ref{g1}, there is the following Grothendieck group.
\begin{align*}
\mathrm{K}_0(\mcc^0,0)/\langle\sum^{n+1}_{i=0}(-1)^i[X_i]_0\,|\,0\to X_{n+1}\to\cdots \to X_{0}\to 0 \text{ is an $n$-exact sequence in $\mcc^0$}\rangle.
\end{align*} 
We denote this Grothendieck group by $\mathrm{K}_0^\prime(\mcc^0)$ and we put
\begin{align*}
I^\prime &\coloneqq\langle\sum^{n+1}_{i=0}(-1)^i[X_i]_0\,|\,0\to X_{n+1}\to\cdots \to X_{0}\to 0\text{ is an $n$-exact sequence in $\mcc^0$}\rangle,\nonumber\\
I&\coloneqq\langle\sum^{n+1}_{i=0}(-1)^i[X_i]_0\,|\,0\to X_{n+1}\to\cdots \to X_{0}\to 0\text{ is an exact sequence with $n+2$ terms in $\mcc^0$}\rangle.\nonumber
\end{align*}
By \cite[Lemma 3.5]{I4} and \cite[Lemma 5.1]{Jo}, exact sequences with $n+2$ terms in $\mcc^0$ are exactly $n$-exact sequences in $\mcc^0$. Therefore, we have $I^\prime=I$ and so $\mathrm{K}_0^\prime(\mcc^0)=\mathrm{K}_0(\mcc^0)$.
Since $\modd\Lambda$ has an $n$-cluster tilting object, $\mathrm{K}_0^\prime(\mcc^0)\cong\mathrm{K}_0(\modd\Lambda)$ (see \cite[Theorem 3.7]{Re} or \cite[Theorems 3.9 and 3.11]{DN1}). 
Therefore, $\mathrm{K}_0(\mcc^0)\cong\mathrm{K}_0(\modd\Lambda)$. 
\end{enumerate}
\end{remark}
As mentioned in Remark \ref{rem11}, we have an isomorphism $\mathrm{K}_0(\mcc^0) \cong \mathrm{K}_0(\modd \Lambda)$ for some $n$-hereditary algebras $\Lambda$. We claim that this isomorphism holds for any arbitrary $n$-hereditary algebra.

For any $X\in\modd\Lambda$ and $X\in\mcc^0$, we denote by $[X]$ and $[X]_{\mcc^0}$ the corresponding elements in $\mathrm{K}_0(\modd\Lambda)$ and  $\mathrm{K}_0(\mcc^0)$, respectively.
\begin{theorem}\label{T1}
Let $\Lambda$ be an $n$-hereditary algebra. Then, $\mathrm{K}_0(\mcc^0)\cong\mathrm{K}_0(\modd\Lambda)$.
\end{theorem}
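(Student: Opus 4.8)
The plan is to construct two mutually inverse group homomorphisms between $\mathrm{K}_0(\mcc^0)$ and $\mathrm{K}_0(\modd\Lambda)$. First I would define $\phi\colon\mathrm{K}_0(\mcc^0)\to\mathrm{K}_0(\modd\Lambda)$ on classes of objects by $[X]_{\mcc^0}\mapsto[X]$. This is well defined because every exact sequence with $n+2$ terms in $\mcc^0$ is in particular an exact sequence in $\modd\Lambda$, so its alternating sum of classes already vanishes in $\mathrm{K}_0(\modd\Lambda)$; hence $\phi$ kills all the defining relations of $\mathrm{K}_0(\mcc^0)$ and descends from the split Grothendieck group. In the other direction, since $\gd\Lambda\le n<\infty$, the classes $[P_1],\dots,[P_m]$ form a $\mathbb{Z}$-basis of $\mathrm{K}_0(\modd\Lambda)$, so I can define $\psi\colon\mathrm{K}_0(\modd\Lambda)\to\mathrm{K}_0(\mcc^0)$ unambiguously by $[P_i]\mapsto[P_i]_{\mcc^0}$. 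This uses that each indecomposable projective lies in $\mcc^0$, which holds because $\Lambda\in\mathscr{P}$ and $\mathscr{P}\subseteq\mcc^0$ (by Theorem \ref{th1} in the $n$-representation finite case and by Proposition \ref{Thio}$(\mathrm{f})$ in the $n$-representation infinite case, the two cases furnished by the dichotomy Theorem \ref{de}).

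It is immediate that $\phi\circ\psi=\mathrm{id}$, since $\phi\psi([P_i])=\phi([P_i]_{\mcc^0})=[P_i]$ and the $[P_i]$ generate $\mathrm{K}_0(\modd\Lambda)$; this already gives that $\phi$ is surjective. The core of the argument is to show $\psi\circ\phi=\mathrm{id}$, equivalently that $\psi$ is surjective. For this I would take an arbitrary $X\in\mcc^0$ and use a projective resolution
$$0\to P_d\to P_{d-1}\to\cdots\to P_0\to X\to 0$$
with $d=\pd X\le n$. The decisive observation is that all of its terms lie in $\mcc^0$: each $P_i$ is projective, hence in $\mathscr{P}\subseteq\mcc^0$, and $X\in\mcc^0$ by assumption. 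After padding the left end with $n-d$ copies of the zero object, this becomes an exact sequence with exactly $n+2$ terms in $\mcc^0$, so it is one of the defining relations of $\mathrm{K}_0(\mcc^0)$ and yields $[X]_{\mcc^0}=\sum_{i=0}^{d}(-1)^i[P_i]_{\mcc^0}$.

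Since the same resolution gives $[X]=\sum_{i=0}^{d}(-1)^i[P_i]$ in $\mathrm{K}_0(\modd\Lambda)$, I obtain $\psi([X])=\sum_{i=0}^{d}(-1)^i[P_i]_{\mcc^0}=[X]_{\mcc^0}$, so $\psi\circ\phi=\mathrm{id}$ on the generators $[X]_{\mcc^0}$ and hence everywhere. Thus $\phi$ and $\psi$ are mutually inverse isomorphisms. The only points requiring genuine care—and the closest thing to an obstacle—are the two structural inputs feeding the resolution argument: that projectives really do belong to $\mcc^0$ (so that $\Lambda\in\mathscr{P}\subseteq\mcc^0$ must be invoked through the case split of Theorem \ref{de}), and that a projective resolution of length $\le n$ may be treated as a \emph{single} $(n+2)$-term exact sequence lying entirely in $\mcc^0$, rather than being broken into short exact sequences whose syzygies could leave $\mcc^0$. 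It is precisely the fact that all intermediate terms are projective—not syzygies—that keeps the whole resolution inside $\mcc^0$, while the bound $\gd\Lambda\le n$ makes it short enough to be a legitimate relation in $\mathrm{K}_0(\mcc^0)$.
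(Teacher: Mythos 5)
Your proof is correct, and while the overall architecture matches the paper's (two mutually inverse homomorphisms, one induced by the inclusion $\mcc^0\subseteq\modd\Lambda$, the other built from projective resolutions), your construction of the inverse map is genuinely more economical. The paper defines its map $\beta\colon\mathrm{K}_0(\modd\Lambda)\to\mathrm{K}_0(\mcc^0)$ on \emph{every} object $X$ by choosing a projective resolution and taking the alternating sum of its terms; this forces two non-trivial verifications, namely independence of the chosen resolution (via Schanuel's lemma) and additivity over short exact sequences (via the Horseshoe lemma), which together occupy most of the paper's proof. You instead exploit the fact that $\{[P_1],\dots,[P_m]\}$ is a $\mathbb{Z}$-basis of the free abelian group $\mathrm{K}_0(\modd\Lambda)$ (valid since $\gd\Lambda\le n<\infty$), so $\psi$ is determined by its values on that basis and well-definedness is automatic. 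The only substantive work left is exactly the step both proofs share: for $X\in\mcc^0$, the (zero-padded) projective resolution is an $(n+2)$-term exact sequence lying entirely in $\mcc^0$ -- because projectives belong to $\mathscr{P}\subseteq\mcc^0$ in both branches of the dichotomy -- hence is a defining relation giving $[X]_{\mcc^0}=\sum_i(-1)^i[P_i]_{\mcc^0}$, from which $\psi\circ\phi=\mathrm{id}$ follows. What the paper's heavier approach buys is an explicit formula for $\beta([X])$ valid for arbitrary $X\in\modd\Lambda$ (not only $X\in\mcc^0$), which is occasionally convenient elsewhere; what yours buys is a shorter proof with fewer homological-algebra inputs. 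Both are complete.
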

\begin{proof}
There exists a natural morphism
\begin{align*}
i:\mathrm{K}_0(\mcc^0,0) & \rightarrow \mathrm{K}_0(\modd\Lambda,0).\\
 [X]_0&\mapsto [X]_0
\end{align*}‌ 
Assume that
\begin{equation}\label{seq1}
0\to X_{n+1}\to X_n\to\cdots \to X_{0}\to 0
\end{equation}
is an exact sequence with $n+2$ terms in $\mcc^0$.
Since $\mcc^0\subseteq\modd\Lambda$, the exact sequence \eqref{seq1} is in $\modd\Lambda$ and so $\sum^{n+1}_{i=0}(-1)^i[X_i]=0$ (see \cite[Proposition $\mathrm{VIII}$.4.1]{Bas}). Therefore, $i$ induces a well-defined group homomorphism
\begin{align*}
\alpha:\mathrm{K}_0(\mcc^0)&  \rightarrow \mathrm{K}_0(\modd\Lambda),\\
 [X]_{\mcc^0}&\mapsto [X]
\end{align*}
between Grothendieck groups.
It is enough to find a group homomorphism that is mutually inverse of $\alpha$.
Suppose that $X\in\modd\Lambda$. Since $\gd\Lambda\leq n$, there exists a projective resolution
\begin{equation}\label{pr}
0\to P_n^X\to P^X_{n-1}\to \cdots\to P_1^X \to P_0^X\to X\to0,
\end{equation}
of $X$.
We define the map
\begin{align*}
\mathrm{Ind}_{\mcc^0}:\modd\Lambda &\to\mathrm{K}_0(\mcc^0,0).\\
X &\mapsto \sum^{n}_{i=0}(-1)^i[P_i^X]_0
\end{align*}‌ 
$\mathrm{Ind}_{\mcc^0}(X)$ is independent of the choice of the projective resolution \eqref{pr} of $X$. In fact, if we consider the other projective resolution 
\begin{align*}
0\to Q_n^X\to Q^X_{n-1}\to \cdots\to Q_1^X \to Q_0^X\to X\to0,
\end{align*}
of $X$, then by Schanuel’s lemma (for example see \cite[Exercise 3.15]{Ro}), we have 
$$P_n^X\oplus Q_{n-1}^X\oplus P_{n-2}^X\oplus\cdots\cong Q_n^X\oplus P_{n-1}^X\oplus Q_{n-2}^X\oplus\cdots$$
Hence,  $\sum^{n}_{i=0}(-1)^i[P^X_i]_0=\sum^{n}_{i=0}(-1)^i[Q^X_i]_0$ and so $\mathrm{Ind}_{\mcc^0}$ is well-defined. It is clear that one can be extended $\mathrm{Ind}_{\mcc^0}$ to the group homomorphism 
$$\mathrm{Ind}_{\mcc^0}:\mathrm{K}_0(\modd\Lambda,0) \rightarrow\mathrm{K}_0(\mcc^0,0).$$
Now, assume that $0\to X\to Y\to Z\to 0$ is a short exact sequence in $\modd\Lambda$. 
Consider the diagram
\begin{center}
\scalebox{.8}{
\begin{tikzpicture}
\node (X00) at (-4,0) {$0$};
\node (X0) at (-2,0) {$P_n^X$};
\node (X1) at (0,0) {$P_{n-1}^X$};
\node (X2) at (2,0) {$\cdots$};
\node (X3) at (4,0) {$P_0^X$};
\node (X4) at (6,0) {$X$};
\node (X5) at (8,0) {$0$};
\node (X04) at (-4,-4) {$0$};
\node (X05) at (-2,-4) {$P_n^Z$};
\node (X6) at (0,-4) {$P_{n-1}^Z$};
\node (X7) at (2,-4) {$\cdots$};
\node (X8) at (4,-4) {$P_0^Z$};
\node (X9) at (6,-4) {$Z$};
\node (X10) at (8,-4) {$0$};
\node (X11) at (6,2) {$0$};
\node (X12) at (6,-6) {$0$};
\node (X13) at (6,-2) {$Y$};
\draw [->,thick] (X00) -- (X0);
\draw [->,thick] (X0) -- (X1);
\draw [->,thick] (X1) -- (X2);
\draw [->,thick] (X2) -- (X3);
\draw [->,thick] (X3) -- (X4);
\draw [->,thick] (X4) -- (X5);
\draw [->,thick] (X04) -- (X05);
\draw [->,thick] (X05) -- (X6);
\draw [->,thick] (X6) -- (X7);
\draw [->,thick] (X7) -- (X8);
\draw [->,thick] (X8) -- (X9);
\draw [->,thick] (X9) -- (X10);
\draw [->,thick] (X11) -- (X4);
\draw [->,thick] (X9) -- (X12);
\draw [->,thick] (X4) -- (X13);
\draw [->,thick] (X13) -- (X9);
\end{tikzpicture}}
\end{center}
where the rows are projective resolutions. Then, by Horseshoe lemma (for example see \cite[Proposition 6.24]{Ro}), we can complete the above diagram to the  commutative diagram
\begin{center}
\scalebox{.8}{
\begin{tikzpicture}
\node (X00) at (-4,0) {$0$};
\node (X0) at (-2,0) {$P_n^X$};
\node (X1) at (0,0) {$P_{n-1}^X$};
\node (X2) at (2,0) {$\cdots$};
\node (X3) at (4,0) {$P_0^X$};
\node (X4) at (6,0) {$X$};
\node (X5) at (8,0) {$0$};
\node (X04) at (-4,-4) {$0$};
\node (X05) at (-2,-4) {$P_n^Z$};
\node (X6) at (0,-4) {$P_{n-1}^Z$};
\node (X7) at (2,-4) {$\cdots$};
\node (X8) at (4,-4) {$P_0^Z$};
\node (X9) at (6,-4) {$Z$};
\node (X10) at (8,-4) {$0$};
\node (X11) at (6,2) {$0$};
\node (X02) at (0,2) {$0$};
\node (X01) at (-2,2) {$0$};
\node (X17) at (4,2) {$0$};
\node (X18) at (2,2) {$0$};
\node (X08) at (0,-6) {$0$};
\node (X09) at (-2,-6) {$0$};
\node (X12) at (6,-6) {$0$};
\node (X19) at (4,-6) {$0$};
\node (X20) at (2,-6) {$0$};
\node (X13) at (6,-2) {$Y$};
\node (X21) at (8,-2) {$0$};
\node (X012) at (-4,-2) {$0$};
\node (X013) at (-2,-2) {$P_n^Y$};
\node (X14) at (0,-2) {$P_{n-1}^Y$};
\node (X15) at (2,-2) {$\cdots$};
\node (X16) at (4,-2) {$P_0^Y$};
\draw [->,thick] (X00) -- (X0);
\draw [->,thick] (X0) -- (X1);
\draw [->,thick] (X1) -- (X2);
\draw [->,thick] (X2) -- (X3);
\draw [->,thick] (X3) -- (X4);
\draw [->,thick] (X4) -- (X5);
\draw [->,thick] (X04) -- (X05);
\draw [->,thick] (X05) -- (X6);
\draw [->,thick] (X6) -- (X7);
\draw [->,thick] (X7) -- (X8);
\draw [->,thick] (X8) -- (X9);
\draw [->,thick] (X9) -- (X10);
\draw [->,thick] (X11) -- (X4);
\draw [->,thick] (X9) -- (X12);
\draw [->,thick] (X4) -- (X13);
\draw [->,thick] (X13) -- (X9);
\draw [->,thick] (X012) -- (X013);
\draw [->,thick] (X013) -- (X14);
\draw [->,thick] (X14) -- (X15);
\draw [->,thick] (X15) -- (X16);
\draw [->,thick] (X16) -- (X13);
\draw [->,thick] (X2) -- (X15);
\draw [->,thick] (X15) -- (X7);
\draw [->,thick] (X3) -- (X16);
\draw [->,thick] (X16) -- (X8);
\draw [->,thick] (X17) -- (X3);
\draw [->,thick] (X18) -- (X2);
\draw [->,thick] (X8) -- (X19);
\draw [->,thick] (X7) -- (X20);
\draw [->,thick] (X13) -- (X21);
\draw [->,thick] (X02) -- (X1);
\draw [->,thick] (X01) -- (X0);
\draw [->,thick] (X6) -- (X08);
\draw [->,thick] (X05) -- (X09);
\draw [->,thick] (X1) -- (X14);
\draw [->,thick] (X14) -- (X6);
\draw [->,thick] (X0) -- (X013);
\draw [->,thick] (X013) -- (X05);
\end{tikzpicture}}
\end{center}
where all columns are exact, the middle row is a projective resolution of $Y$ such that $P^Y_i\cong P^X_i\oplus P^Z_i$, for any $i\in\{0,\dots,n\}$. Therefore, we have
$$\mathrm{Ind}_{\mcc^0}([X]_0-[Y]_0+[Z]_0)=\sum^{n}_{i=0}(-1)^i[P_i^X]_0-\sum^{n}_{i=0}(-1)^i[P_i^X\oplus P_i^Z]_0+\sum^{n}_{i=0}(-1)^i[P_i^Z]_0=0,$$
and so $\mathrm{Ind}_{\mcc^0}$ induces the well-defined group homomorphism
\begin{align*}
\beta:\mathrm{K}_0(\modd\Lambda)& \rightarrow \mathrm{K}_0(\mcc^0),\\
 [X] &\mapsto \sum^{n}_{i=0}(-1)^i[P_i^X]_{\mcc^0}
\end{align*}
between Grothendieck groups.
It remains to show that $\alpha$ and $\beta$ are mutually inverse.
Consider $X\in\modd\Lambda$ and the projective resolution \eqref{pr} of $X$.
We know that any projective resolution is an exact sequence in $\modd\Lambda$ and clearly $[X]=\sum^{n}_{i=0}(-1)^i[P^X_i]$. Therefore, we have
$$\alpha\circ\beta([X])=\alpha(\sum^{n}_{i=0}(-1)^i[P_i^X]_{\mcc^0})=\sum^{n}_{i=0}(-1)^i[P_i^X]=[X].$$
Consider $X\in\mcc^0$ and the projective resolution \eqref{pr} of $X$.
We know that any projective resolution is an exact sequence with $n+2$ terms in $\mcc^0$ and clearly $[X]_{\mcc^0}=\sum^{n}_{i=0}(-1)^i[P^X_i]_{\mcc^0}$. Therefore, we have
$$\beta\circ\alpha([X]_{\mcc^0})=\beta([X])=\sum^{n}_{i=0}(-1)^i[P^X_i]_{\mcc^0}=[X]_{\mcc^0}.$$
\end{proof}
As we mentioned before, by \cite[Theorem $\mathrm{III}$.3.5]{ASS}, for a basic finite dimensional $K$-algebra $\Lambda$ with a complete set $\{S_1,\dots,S_m\}$ of the isomorphism classes of simple $\Lambda$-modules, $\{[S_1],\dots,[S_m]\}$ is a basis for $\mathrm{K}_0(\modd\Lambda)$ and $\mathrm{K}_0(\modd\Lambda)\cong\mathbb{Z}^m$. As a consequence of this fact and Theorem \ref{T1}, we have the following corollary.
\begin{corollary}\label{Zm}
Let $\Lambda$ be an $n$-hereditary algebra and $\{S_1,\dots,S_m\}$ 
be a complete set of the isomorphism classes of simple $\Lambda$-modules. Then, $\mathrm{K}_0(\mcc^0)\cong\mathbb{Z}^m$.
\end{corollary}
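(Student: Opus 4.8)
The plan is to obtain the isomorphism by concatenating two facts that are already in hand. First I would invoke \cite[Theorem $\mathrm{III}$.3.5]{ASS}, which asserts that for a basic finite dimensional $K$-algebra the classes $[S_1],\dots,[S_m]$ of the simple modules form a $\mathbb{Z}$-basis of $\mathrm{K}_0(\modd\Lambda)$, so that $\mathrm{K}_0(\modd\Lambda)\cong\mathbb{Z}^m$ as abelian groups. Since $\Lambda$ is assumed to be $n$-hereditary, we have $\gd\Lambda\leq n<\infty$, so the hypotheses of the cited result are met and it applies verbatim.

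Next I would apply Theorem \ref{T1}, which provides a group isomorphism $\mathrm{K}_0(\mcc^0)\cong\mathrm{K}_0(\modd\Lambda)$ valid for any $n$-hereditary algebra. Composing this with the previous isomorphism immediately yields $\mathrm{K}_0(\mcc^0)\cong\mathbb{Z}^m$, which is the assertion of the corollary.

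There is no genuine obstacle to overcome here: all of the substantive work has already been carried out in Theorem \ref{T1}, where the mutually inverse homomorphisms $\alpha$ and $\beta$ between $\mathrm{K}_0(\mcc^0)$ and $\mathrm{K}_0(\modd\Lambda)$ were constructed from projective resolutions and the Horseshoe and Schanuel lemmas. The only remaining ingredient is the classical identification of $\mathrm{K}_0(\modd\Lambda)$ with the free abelian group on the simples, which is exactly the content of \cite[Theorem $\mathrm{III}$.3.5]{ASS}. Hence the corollary is nothing more than the transitivity of group isomorphism applied to these two statements, and the proof reduces to recording this composition.
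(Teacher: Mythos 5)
Your proposal is correct and coincides with the paper's own argument: the corollary is stated as an immediate consequence of Theorem \ref{T1} together with the classical fact from \cite[Theorem $\mathrm{III}$.3.5]{ASS} that $\{[S_1],\dots,[S_m]\}$ is a basis of $\mathrm{K}_0(\modd\Lambda)$, exactly as you describe. Nothing further is needed.
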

\subsection{$\boldsymbol{n}$-hereditary algebras with  the positive definite quadratic form}
 We recall that an arbitrary quadratic form $q$ is \textit{positive definite} if $q(x)>0$ for all $x\neq0$. For hereditary algebras, one can use the homological quadratic forms to determine the representation type of algebras. Indeed,  it is a well-known fact that the homological quadratic form of an hereditary algebra is positive definite if and only if that algebra is representation finite.
 It is natural to ask if one can generalize this fact to $n$-hereditary algebras. For this purpose, we use the subcategory $\mcc^0$ of $\modd\Lambda$ instead of $\modd\Lambda$. Any exact sequence with $n+2$ terms in $\mcc^0$ is an exact sequence in $\modd\Lambda$. So we can restrict the bilinear form $B$ and the corresponding homological quadratic form $\chi$ to $\mathrm{K}_0(\mcc^0)$. Moreover, by Theorem \ref{T1}, we can define $B([X]_{\mcc^0},[Y]_{\mcc^0})\coloneqq B([X],[Y])$ and $\chi([X]_{\mcc^0})\coloneqq\chi([X])$, for any $X,Y\in\mcc^0$.

In the following theorem, we give equivalent conditions to $n$-representation finiteness of $n$-hereditary algebras.
\begin{theorem}\label{form0}
Let $\Lambda$ be an $n$-hereditary algebra with the complete set $\{P_1,\dots,P_m\}$ and $\{I_1,\dots,I_m\}$ of the isomorphism classes of indecomposable projective and indecomposable injective $\Lambda$-modules, respectively. Then the following are equivalent.
\begin{enumerate}
\item[(a)]
$\Lambda$ is $n$-representation finite.
\item[(b)]
$\{X\in\ind\mathscr{P}\,|\,\chi([X])=l\}$ is a finite set for any positive integer $l$.
\item[(c)]
$\{X\in\ind\mathscr{P}\,|\,\chi([X])=\chi([P_i])\}$ is a finite set for any $i\in\{1,\dots,m\}$.
\item[(d)]
$\{X\in\ind\mathscr{I}\,|\,\chi([X])=l\}$ is a finite set for any positive integer $l$.
\item[(e)]
$\{X\in\ind\mathscr{I}\,|\,\chi([X])=\chi([I_i])\}$ is a finite set for any $i\in\{1,\dots,m\}$.
\item[(f)]
$\{X\in\ind\mcc^0\,|\,\chi([X])=l\}$ is a finite set for any positive integer $l$.
\end{enumerate}
\end{theorem}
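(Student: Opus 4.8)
The plan is to reduce everything to a single numerical identity, namely that $\chi([X])=\dim_K\End_\Lambda(X)$ for every indecomposable $X$ in $\mathscr{P}\vee\mathscr{I}$, and that this value is constant along the orbits of $\nu_n$. Once this is in place, the dichotomy of Theorem \ref{de} settles the equivalence at once: in the $n$-representation finite case $\ind\mcc^0$ is finite, while in the $n$-representation infinite case each nonzero indecomposable projective generates an infinite $\nu_n$-orbit of fixed $\chi$-value.

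First I would establish the identity. Let $X\in\ind\mathscr{P}$. Since $\mathscr{P}\subseteq\mcc^0$, I may apply Remark \ref{fo} with $Y=X$ to obtain
$$\chi([X])=\dim_K\Hom_\Lambda(X,X)+(-1)^n\dim_K\Ext^n_\Lambda(X,X).$$
By Theorem \ref{n-ext}(a)(iv) the self-extension group $\Ext^n_\Lambda(X,X)$ vanishes, so $\chi([X])=\dim_K\End_\Lambda(X)$. Next, by Theorem \ref{n-ext}(a)(i) there are a unique $i\geq0$ and a unique indecomposable projective $P$ with $X\cong\nu_n^{-i}P$, and Theorem \ref{n-ext}(a)(ii) gives $\dim_K\End_\Lambda(X)=\dim_K\End_\Lambda(P)=\chi([P])$. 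Thus $\chi$ is constant on $\{\nu_n^{-i}P\mid i\geq0\}$, equal to $\chi([P])$; the dual argument via Theorem \ref{n-ext}(b) yields $\chi([\nu_n^{i}I])=\chi([I])$ on $\ind\mathscr{I}$. Being the $K$-dimension of a nonzero endomorphism algebra, each such value is a positive integer.

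I would then organize the six conditions into the cycles (a)$\Rightarrow$(b)$\Rightarrow$(c)$\Rightarrow$(a), (a)$\Rightarrow$(d)$\Rightarrow$(e)$\Rightarrow$(a) and (a)$\Rightarrow$(f)$\Rightarrow$(a). If $\Lambda$ is $n$-representation finite, then by Theorem \ref{th1} the category $\mcc^0=\mathscr{P}=\mathscr{I}$ is $n$-cluster tilting with an additive generator, so $\ind\mcc^0$ is a finite set; hence each set in (b), (d), (f) is finite, and (c), (e) are immediate specializations. The implications (b)$\Rightarrow$(c) and (d)$\Rightarrow$(e) are trivial, restricting $l$ to the finitely many values $\chi([P_i])$, respectively $\chi([I_i])$.

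The substantive directions are (c)$\Rightarrow$(a), (e)$\Rightarrow$(a) and (f)$\Rightarrow$(a), which I would prove contrapositively. Suppose $\Lambda$ is not $n$-representation finite; by Theorem \ref{de} it is $n$-representation infinite. Fix an indecomposable projective $P_j$ and set $l:=\chi([P_j])=\dim_K\End_\Lambda(P_j)$, a positive integer. By Proposition \ref{Thio}(a) the modules $\nu_n^{-i}P_j$, $i\geq0$, are pairwise non-isomorphic indecomposables in $\mathscr{P}$, and by the identity above each satisfies $\chi([\nu_n^{-i}P_j])=l$. Hence $\{X\in\ind\mathscr{P}\mid\chi([X])=l\}$ is infinite, so (c) fails, a fortiori (b); since $\mathscr{P}\subseteq\mcc^0$ by Proposition \ref{Thio}(f), the same family refutes (f). Dually, Proposition \ref{Thio}(b) and the family $\nu_n^{i}I_j$ refute (e), a fortiori (d). This closes all cycles. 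The one genuine point requiring the $n$-hereditary hypothesis is the vanishing $\Ext^n_\Lambda(X,X)=0$ together with the $\nu_n$-invariance of $\dim_K\End_\Lambda$, that is, Theorem \ref{n-ext}; without it the $\chi$-value need not be constant along a $\nu_n$-orbit and the whole scheme collapses. An alternative derivation of this invariance, bypassing Theorem \ref{n-ext}(ii), proceeds through the $\Phi$-invariance of the Euler form in Lemma \ref{x-fi} combined with Proposition \ref{fim}.
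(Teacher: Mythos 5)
Your proposal is correct and follows essentially the same route as the paper: Remark \ref{fo} together with Theorem \ref{n-ext} gives $\chi([X])=\dim_K\End_\Lambda(X)=\chi([P_i])$ (resp.\ $\chi([I_i])$) along each $\nu_n$-orbit, the finite case is handled by Theorem \ref{th1} and Proposition \ref{per}, and the infinite case is refuted via Proposition \ref{Thio}(a). The only difference is that you write out all six conditions explicitly where the paper proves the cycle (a)$\Rightarrow$(b)$\Rightarrow$(c)$\Rightarrow$(a) and declares the rest similar.
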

\begin{proof}
We only prove the equivalence of $(\mathrm{a})$, $(\mathrm{b})$ and $(\mathrm{c})$. The proof of the other parts are similar. Assume that $\Lambda$ is $n$-representation finite, then Theorem \ref{th1} implies that $\mcc ^0=\mathscr{P}=\mathscr{I}$ and by Proposition \ref{per}, $\ind\mcc ^0$ is a finite set. Therefore,  the part $(\mathrm{b})$ holds. By Remark \ref{fo}, $\chi([P_i])=\dim_K\End_\Lambda(P_i) >0$ for any $i\in\{1,\dots,m\}$ and so $(\mathrm{c})$ follows from $(\mathrm{b})$. Suppose that $\{X\in\ind\mathscr{P}\,|\,\chi([X])=\chi([P_i])\}$ is a finite set, for any $i\in\{1,\dots,m\}$. 
If $\Lambda$ is not $n$-representation finite, then Theorem \ref{de} implies that it is $n$-representation infinite. By Proposition \ref{Thio}$(\mathrm{a})$, for any indecomposable projective $\Lambda$-module $P_i$ and any integer $j\geq0$, $\nu_n^{-j}P_i\cong\tau_n^{-j}P_i$ is an indecomposable $\Lambda$-module in $\mathscr{P}$ and these modules are non-isomorphic.
By Remark \ref{fo}, we have
$$\chi([\tau_n^{-j}P_i])=\dim_K\End_\Lambda(\tau_n^{-j}P_i)-\dim_K\Ext_\Lambda^n(\tau_n^{-j}P_i,\tau_n^{-j}P_i).$$  
Proposition \ref{n-ext}$(\mathrm{a})$ implies that
$$\dim_K\End_\Lambda(\tau_n^{-j}P_i)=\dim_K\End_\Lambda(P_i)\,\,\,\quad\,\text{and}\,\,\,\quad\Ext_\Lambda^n(\tau_n^{-j}P_i,\tau_n^{-j}P_i)=0.$$

Therefore, for any $j\geq0$ we have 
$\chi([\tau_n^{-j}P_i])=\dim_K\End_\Lambda(P_i)=\chi([P_i])$. Then $\{X\in\ind\mathscr{P}\,|\,\chi([X])=\chi([P_i])\}$ is an infinite set for any $i\in\{1,\dots,m\}$ and this is a contradiction.
\end{proof}
If the Gabriel quiver of $\Lambda$ is acyclic, then $\dim_K\End_\Lambda(P_i)=\dim_K\End_\Lambda(I_i)=1$ for any $i\in\{1,\dots,m\}$. Therefore, we have the following  immediate result. 
\begin{corollary}\label{form00}
		Let $\Lambda$ be an $n$-hereditary algebra with the acyclic Gabriel quiver. Then the following are equivalent.
		\begin{enumerate}
			\item[(a)]
			$\Lambda$ is $n$-representation finite.
			\item[(b)]
			$\{X\in\ind\mathscr{P}\,|\,\chi([X])=1\}$ is a finite set.
			\item[(c)]
			$\{X\in\ind\mathscr{I}\,|\,\chi([X])=1\}$ is a finite set.
			\item[(d)]
		 	$\{X\in\ind\mcc^0\,|\,\chi([X])=1\}$ is a finite set.
		\end{enumerate}
\end{corollary}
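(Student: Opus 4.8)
The plan is to deduce everything from Theorem \ref{form0}, whose conditions $(\mathrm{c})$, $(\mathrm{e})$ and $(\mathrm{f})$ already concern the quantities $\chi([P_i])$, $\chi([I_i])$ and the level sets $\{X\mid\chi([X])=l\}$. The only extra input required is that acyclicity of the Gabriel quiver collapses all of these thresholds to the single value $1$. First I would record that, over an algebraically closed field, an acyclic Gabriel quiver forces $e_i\Lambda e_i\cong K$, whence $\dim_K\End_\Lambda(P_i)=\dim_K\End_\Lambda(I_i)=1$ for every $i\in\{1,\dots,m\}$ (this is the remark stated just before the corollary).

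Next I would compute the relevant values of $\chi$. By Remark \ref{fo}, for any $X\in\mathscr{P}\vee\mathscr{I}$ one has $\chi([X])=\dim_K\End_\Lambda(X)+(-1)^n\dim_K\Ext_\Lambda^n(X,X)$. Applied to the projective $P_i$, the term $\Ext_\Lambda^n(P_i,P_i)$ vanishes, so $\chi([P_i])=\dim_K\End_\Lambda(P_i)=1$; applied to $I_i\in\ind\mathscr{I}$ and invoking Theorem \ref{n-ext}$(\mathrm{b})$ to kill $\Ext_\Lambda^n(I_i,I_i)$, one gets $\chi([I_i])=\dim_K\End_\Lambda(I_i)=1$. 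Consequently condition $(\mathrm{c})$ of Theorem \ref{form0} reads ``$\{X\in\ind\mathscr{P}\mid\chi([X])=1\}$ is finite'', which is exactly $(\mathrm{b})$ of the present corollary, and likewise condition $(\mathrm{e})$ becomes $(\mathrm{c})$. Thus Theorem \ref{form0} yields $(\mathrm{a})\Leftrightarrow(\mathrm{b})$ and $(\mathrm{a})\Leftrightarrow(\mathrm{c})$ at once.

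It remains to handle $(\mathrm{d})$, where the one subtlety is that Theorem \ref{form0}$(\mathrm{f})$ demands finiteness at every $l$ whereas $(\mathrm{d})$ only demands it at $l=1$. The forward implication $(\mathrm{a})\Rightarrow(\mathrm{d})$ is immediate from $(\mathrm{a})\Rightarrow(\mathrm{f})$ of Theorem \ref{form0} by specializing to $l=1$. For the converse I would exploit the inclusion $\ind\mathscr{P}\subseteq\ind\mcc^0$, valid in both cases (in the $n$-representation finite case $\mcc^0=\mathscr{P}$ by Theorem \ref{th1}, and in the $n$-representation infinite case $\mcc^0=\mathscr{P}\vee\mathscr{R}\vee\mathscr{I}$ by Proposition \ref{Thio}$(\mathrm{f})$): finiteness of $\{X\in\ind\mcc^0\mid\chi([X])=1\}$ forces finiteness of its subset $\{X\in\ind\mathscr{P}\mid\chi([X])=1\}$, which is $(\mathrm{b})$, hence $(\mathrm{a})$ by the first equivalence. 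I do not expect a genuine obstacle; the single point to state carefully is this containment argument, which bridges the lone threshold $l=1$ back to the full strength of Theorem \ref{form0}.
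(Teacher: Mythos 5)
Your proposal is correct and follows the same route the paper intends: the corollary is deduced from Theorem \ref{form0} by observing that acyclicity of the Gabriel quiver gives $\chi([P_i])=\dim_K\End_\Lambda(P_i)=1$ and $\chi([I_i])=\dim_K\End_\Lambda(I_i)=1$ (via Remark \ref{fo} and Theorem \ref{n-ext}), so conditions (c) and (e) of that theorem specialize to (b) and (c) here. Your explicit handling of (d) via the containment $\ind\mathscr{P}\subseteq\ind\mcc^0$ is a correct and slightly more careful spelling-out of a step the paper leaves implicit.
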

 We recall that an element $x\in\mathrm{K}_0(\modd\Lambda)$ with $\chi(x) = 1$ is called a \textit{root} of $\chi$. A root $x$ is \textit{positive} if there exists an $X\in\modd\Lambda$ such that $x = [X]$.
The above corollary shows that an $n$-hereditary algebra $\Lambda$ with the acyclic Gabriel quiver is $n$-representation finite if and only if the number of isomorphism classes of indecomposable $\Lambda$-modules $X\in \mcc^0$ that $[X]$ is a positive root of $\chi$ is finite. Note that when the Gabriel quiver of an $n$-hereditary algebra $\Lambda$ is acyclic, Remark \ref{fo} and Proposition \ref{n-ext} imply that for any indecomposable $\Lambda$-modules $X$ with $X\in \mathscr{P}\vee\mathscr{I}$, $[X]$ is a positive root of $\chi$.

Now, we give a sufficient condition for the $n$-representation finiteness of $n$-hereditary algebras by using quadratic forms.
\begin{theorem}\label{form}
Let $\Lambda$ be an $n$-hereditary algebra with odd $n$. If the restriction of the homological quadratic form $\chi$ to $\mathrm{K}_0(\mcc^0)$ is positive definite, then $\Lambda$ is $n$-representation finite.
\end{theorem}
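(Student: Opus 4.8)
The plan is to reduce the assertion to the implication $(\mathrm{b})\Rightarrow(\mathrm{a})$ of Theorem \ref{form0}, verifying condition $(\mathrm{b})$ from positive definiteness together with the injectivity of dimension vectors supplied by Theorem \ref{dv}. First I would transfer the hypothesis to the ordinary Grothendieck group. By Theorem \ref{T1} the map $\alpha\colon\mathrm{K}_0(\mcc^0)\to\mathrm{K}_0(\modd\Lambda)$, $[X]_{\mcc^0}\mapsto[X]$, is an isomorphism, and by the very definition $\chi([X]_{\mcc^0})\coloneqq\chi([X])$ the restricted form on $\mathrm{K}_0(\mcc^0)$ is $\chi\circ\alpha^{-1}$ composed appropriately with $\chi$ on $\mathrm{K}_0(\modd\Lambda)$. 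Since $\alpha$ is bijective, the hypothesis says exactly that $\chi$ is positive definite on $\mathrm{K}_0(\modd\Lambda)$. Composing with the dimension-vector isomorphism $\mathrm{K}_0(\modd\Lambda)\cong\mathbb{Z}^m$ and using Proposition \ref{th2}$(\mathrm{b})$, this is equivalent to the statement that the integral Euler quadratic form $q$ satisfies $q(x)>0$ for every $x\in\mathbb{Z}^m\setminus\{0\}$.

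The next step is to deduce that each nonempty level set of $q$ is finite. Since $C_\Lambda$ is invertible over $\mathbb{Z}$ by Proposition \ref{car}, the matrix of $q$ has integer entries, so the radical of the associated symmetric bilinear form is a rational subspace of $\mathbb{R}^m$. If $q$ were not positive definite on $\mathbb{R}^m$ this radical would be nonzero, hence would contain a nonzero integer vector annihilated by $q$, contradicting the previous paragraph. Therefore $q$ is positive definite on all of $\mathbb{R}^m$, so there is a constant $c>0$ with $q(x)\ge c\|x\|^2$, and for every positive integer $l$ the set $\{x\in\mathbb{Z}^m\mid q(x)=l\}$ lies in a bounded region and is finite.

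Finally I would feed this into Theorem \ref{form0}. Fix a positive integer $l$ and consider $\{X\in\ind\mathscr{P}\mid\chi([X])=l\}$. By Proposition \ref{th2}$(\mathrm{b})$ the assignment $X\mapsto\underline{\dim}X$ sends this set into the finite set $\{x\in\mathbb{Z}^m\mid q(x)=l\}$, and by Theorem \ref{dv}$(\mathrm{a})$ this assignment is injective on $\ind\mathscr{P}$; this injectivity is precisely where oddness of $n$ is used. Hence $\{X\in\ind\mathscr{P}\mid\chi([X])=l\}$ is finite for every positive integer $l$, which is condition $(\mathrm{b})$ of Theorem \ref{form0}, and so $\Lambda$ is $n$-representation finite. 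I would work with $\mathscr{P}$ rather than all of $\mcc^0$ precisely because Theorem \ref{dv} controls only $\mathscr{P}\vee\mathscr{I}$ and not the regular part $\mathscr{R}$, so condition $(\mathrm{b})$ is the natural target. The only genuinely nontrivial ingredient is this injectivity from Theorem \ref{dv}: without it a single dimension vector could correspond to infinitely many indecomposables in $\mathscr{P}$, and positive definiteness alone would not bound their number. The passage from a positive definite integral form to finite level sets is routine linear algebra.
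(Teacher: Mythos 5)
Your proposal is correct and follows essentially the same route as the paper: both reduce the statement to condition (b) of Theorem \ref{form0}, using the injectivity of $X\mapsto\underline{\dim}X$ on $\ind\mathscr{P}$ from Theorem \ref{dv}(a) (the only place odd $n$ enters) together with the finiteness of integral level sets of a form that is positive definite over $\mathbb{R}$; the paper merely organizes this as a proof by contradiction and establishes real positive definiteness via a Gram--Schmidt orthogonalization over $\mathbb{Q}$ rather than your radical argument. The one point to tighten is your claim that failure of positive definiteness over $\mathbb{R}$ forces a nonzero radical: this presupposes that $q$ is already positive semidefinite on $\mathbb{R}^m$, which you should first deduce from $q>0$ on $\mathbb{Q}^m\setminus\{0\}$ by density and continuity (otherwise an indefinite form with trivial radical is not excluded by your sentence as written).
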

\begin{proof}
Assume that $\Lambda$ is not $n$-representation finite. Theorem \ref{form0} implies that there exists a positive integer $l$ such that $\{X\in\ind\mathscr{P}\,|\,\chi([X])=l\}$ is an infinite set.
In fact, the quadratic form $\chi$ corresponds infinitely many different elements in $\mathrm{K}_0(\mcc^0)$ to $l$. By Corollary \ref{Zm}, $\mathrm{K}_0(\mcc^0)\cong\mathbb{Z}^m$. Therefore, it can be considered as $\mathbb{Z}\otimes_\mathbb{Z}\mathrm{K}_0(\mcc^0)$. By the assumption, $\chi$ is positive definite and so it can be extended  to a positive definite quadratic form on $\mathbb{Q}\otimes_\mathbb{Z}\mathrm{K}_0(\mcc^0)$, by linearity. Also, $\chi$ can be extended  to a continuous quadratic form on $\mathbb{R}\otimes_\mathbb{Z}\mathrm{K}_0(\mcc^0)$. We claim that $\chi$ is positive definite on $\mathbb{R}\otimes_\mathbb{Z}\mathrm{K}_0(\mcc^0)$. The associated symmetric bilinear form to $B$ on
$\mathbb{Q}\otimes_\mathbb{Z}\mathrm{K}_0(\mcc^0)$ is defined as
$$B^\prime(x,y)=1/2\big(B(x,y)+B(y,x)\big).$$ 
By applying the Gram-Schmidt process, there is an orthogonal basis $\{v_1,\dots,v_m\}$ of
$\mathbb{Q}\otimes_\mathbb{Z}\mathrm{K}_0(\mcc^0)$ relative to $B^\prime$. Consider $0\neq v=\sum_{i=1}^m\alpha^iv_i\in\mathbb{R}\otimes_\mathbb{Z}\mathrm{K}_0(\mcc^0)$. 
We have, $\chi(v)=\sum_{i=1}^m\alpha^2_iB^\prime(v_i,v_i)>0$ and so our claim is proved. We know that for a
positive definite quadratic form $\chi$ on a finite dimensional $\mathbb{R}$-vector space
$V$, the set $\{x\in V\,|\, \chi(x)\leq l\}$ is bounded for
each $l\in\mathbb{R}$. If we apply this fact for the $\mathbb{R}$-vector space $\mathbb{R}\otimes_\mathbb{Z}\mathrm{K}_0(\mcc^0)\cong \mathbb{R}^m$ and the positive definite quadratic form $\chi$, then we obtain the set $\{x\in \mathbb{R}^m\,|\, \chi(x)\leq l\}$ is bounded. Consequently, the set $\{x\in\mathbb{Z}^m\,|\, \chi(x)\leq l\}$ is finite. 
Namely, there exist finitely many dimension vectors $x$ such that $\chi(x)\leq l$. Therefore, Theorem \ref{dv}$(\mathrm{a})$ implies that the set
$$\{x\in\mathrm{K}_0(\mcc^0)\,|\, \chi(x)\leq l\}$$ 
is finite. But this is a contradiction and so $\Lambda$ is $n$-representation finite.
\end{proof}
The following result immediately follows from the above theorem and Theorem \ref{T1}.
\begin{corollary}\label{cor1}
Let $\Lambda$ be an $n$-hereditary algebra with odd $n$. If $\chi$ is positive definite, then $\Lambda$ is $n$-representation finite.
\end{corollary}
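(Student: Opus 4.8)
The plan is to obtain this as a direct consequence of Theorem \ref{form}, the entire task being to transfer the positive-definiteness hypothesis from $\mathrm{K}_0(\modd\Lambda)$ to $\mathrm{K}_0(\mcc^0)$. By Theorem \ref{T1} the assignment $[X]_{\mcc^0}\mapsto[X]$ extends to a group isomorphism $\alpha\colon\mathrm{K}_0(\mcc^0)\to\mathrm{K}_0(\modd\Lambda)$, and by the very definition of $\chi$ on $\mathrm{K}_0(\mcc^0)$ (namely $\chi([X]_{\mcc^0})\coloneqq\chi([X])$ as recorded just before Theorem \ref{form0}) the restriction of $\chi$ to $\mathrm{K}_0(\mcc^0)$ is precisely the pullback $\chi\circ\alpha$ of the homological quadratic form along $\alpha$.

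First I would note the elementary fact that positive definiteness of a quadratic form is preserved under pullback by any injective group homomorphism. Concretely, fix a nonzero $y\in\mathrm{K}_0(\mcc^0)$; since $\alpha$ is an isomorphism it is in particular injective, so $\alpha(y)\neq 0$, and the assumed positive definiteness of $\chi$ on $\mathrm{K}_0(\modd\Lambda)$ yields $\chi(\alpha(y))>0$. Thus $\chi(y)=\chi(\alpha(y))>0$ for every nonzero $y$, which is exactly the assertion that the restriction of $\chi$ to $\mathrm{K}_0(\mcc^0)$ is positive definite. Having verified this hypothesis, I would then invoke Theorem \ref{form} (whose proof is where the oddness of $n$ and the uniqueness-by-dimension-vector result of Theorem \ref{dv} are genuinely used) to conclude that $\Lambda$ is $n$-representation finite.

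I do not expect any real obstacle here: all of the substantive content lies in the two results being cited, the isomorphism $\mathrm{K}_0(\mcc^0)\cong\mathrm{K}_0(\modd\Lambda)$ of Theorem \ref{T1} and the implication from positive definiteness on $\mathrm{K}_0(\mcc^0)$ to $n$-representation finiteness of Theorem \ref{form}. The corollary is a formal consequence obtained by transporting the quadratic form along $\alpha$. The only point deserving a moment's care is that, because $\chi$ on $\mathrm{K}_0(\mcc^0)$ is \emph{defined} as the form transported through the isomorphism $\alpha$, positive definiteness of the two forms is genuinely equivalent, so the hypothesis of Theorem \ref{form} is met on the nose rather than only in one direction.
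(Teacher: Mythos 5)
Your proposal is correct and is exactly the argument the paper intends: the corollary is stated as an immediate consequence of Theorem \ref{form} and Theorem \ref{T1}, and your explicit verification that positive definiteness transports along the isomorphism $\alpha$ of Theorem \ref{T1} (so that the hypothesis of Theorem \ref{form} is satisfied) is precisely the step the paper leaves implicit. No gaps.
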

In the following examples, we show that the converse of Theorem \ref{form} is not satisfied.

\begin{example}\label{exam}
Let $\Lambda$ be the algebra given by the quiver
\begin{center}
	\scalebox{.8}{
		\begin{tikzpicture}
			\node (X1) at (-1.5,-2) {$1$};
			\node (X2) at (0,0) {$2$};
			\node (X3) at (1.5,2) {$3$};
			\node (X4) at (3,4) {$4$};
			\node (X5) at (1.5,-2) {$5$};
			\node (X6) at (3,0) {$6$};
			\node (X7) at (4.5,2) {$7$};
			\node (X8) at (4.5,-2) {$8$};
			\node (X9) at (6,0) {$9$};
			\node (X10) at (7.5,-2) {$10$};
			\draw [->,thick] (X1) -- (X2)node [midway,left] {$y_1$};
			\draw [->,thick] (X3) -- (X4)node [midway,left] {$y_2$};
			\draw [->,thick] (X5) -- (X6)node [midway,left] {$y_3$};
			\draw [->,thick] (X6) -- (X7)node [midway,left] {$y_4$};
			\draw [->,thick] (X3) -- (X6)node [midway,right] {$z_1$};
			\draw [->,thick] (X6) -- (X8)node [midway,right] {$z_2$};
			\draw [->,thick] (X4) -- (X7)node [midway,right] {$z_3$};
			\draw [->,thick] (X9) -- (X10)node [midway,right] {$z_4$};
			\draw [->,thick] (X5) -- (X1)node [midway,above] {$x_2$};
			\draw [->,thick] (X10) -- (X8)node [midway,above] {$x_1$};
			\draw [->,thick] (X9) -- (X6)node [midway,above] {$x_3$};
			\draw [->,thick] (X6) -- (X2)node [midway,above] {$x_4$};
			\draw [-,thick,dotted] (X2) -- (X3);
			\draw [-,thick,dotted] (X3) -- (X7);
			\draw [-,thick,dotted] (X5) -- (X2);
			\draw [-,thick,dotted] (X8) -- (X5);
			\draw [-,thick,dotted] (X9) -- (X8);
			\draw [-,thick,dotted] (X9) -- (X7);
	\end{tikzpicture}}
\end{center}
bound by relations $x_2y_1-y_3x_4=z_1y_4-y_2z_3=z_4x_1-x_3z_2=y_3z_2= x_3y_4=z_1x_4=0$. $\Lambda$ is $2$-representation finite (see \cite[Theorem 1.18]{I4}).
Since the following quiver with Euclidean underlying graph $\tilde{E}_6$ is a subquiver of $Q$, $\Lambda$ is representation infinite. 
\begin{center}
	\scalebox{.7}{
		\begin{tikzpicture}
			\node (X1) at (-1.5,-2) {$\bullet$};
			\node (X2) at (0,0) {$\bullet$};
			\node (X4) at (3,4) {$\bullet$};
			\node (X6) at (3,0) {$\bullet$};
			\node (X7) at (4.5,2) {$\bullet$};
			\node (X8) at (4.5,-2) {$\bullet$};
			\node (X10) at (7.5,-2) {$\bullet$};
			\draw [->,thick] (X1) -- (X2);
			\draw [->,thick] (X6) -- (X7);
			\draw [->,thick] (X6) -- (X8);
			\draw [->,thick] (X4) -- (X7);
			\draw [->,thick] (X10) -- (X8);
			\draw [->,thick] (X6) -- (X2);
	\end{tikzpicture}}
\end{center}
Consider the Cartan matrix of $\Lambda$ 
\begin{align*}
C_\Lambda =\begin{bmatrix}                                                                                     
1&0&0&0&1&0&0&0&0&0\\
1&1&0&0&1&1&0&0&1&0\\
0&0&1&0&0&0&0&0&0&0\\
0&0&1&1&0&0&0&0&0&0\\
0&0&0&0&1&0&0&0&0&0\\
0&0&1&0&1&1&0&0&1&0\\
0&0&1&1&1&1&1&0&0&0\\
0&0&1&0&0&1&0&1&1&1\\
0&0&0&0&0&0&0&0&1&0\\
0&0&0&0&0&0&0&0&1&1
\end{bmatrix}.
\end{align*}
Suppose that $X\in\modd\Lambda$ with $\underline{\dim}X=(1 2 0 1 0 3 2 2 0 1)$. By Proposition \ref{th2}$(\mathrm{b})$, we have
$$\chi([X])=q(\underline{\dim}X)=(\underline{\dim}X)^t(C_\Lambda^{-1})^t \underline{\dim}X=0.$$
Consider the following projective resolution of $X$.
\begin{align*}
0\to P_n^X\to \cdots\to P_1^X \to P_0^X\to X\to0.
\end{align*}
We have  $[X]=\sum^{n}_{i=0}(-1)^i[P^X_i]$. Set $x=\sum^{n}_{i=0}(-1)^i[P^X_i]_{\mcc^0}$. We have  
$$\chi(x)=\chi(\sum^{n}_{i=0}(-1)^i[P^X_i]_{\mcc^0})=\chi(\sum^{n}_{i=0}(-1)^i[P^X_i])=\chi([X])=0,$$ 
and so there is an $x\in\mathrm{K}_0(\mcc^0)$ such that $\chi(x)=0$. Therefore, $\chi$ is not positive definite.
\end{example}
Let $k$ be a natural number. An $n$-representation finite algebra $\Lambda$ is called $k$-homogeneous if for any indecomposable projective $\Lambda$-module $P$, $\tau_n^{-(k-1)}(P)$ is an indecomposable injective $\Lambda$-module (see \cite[Definition 1.2]{HI1}).
We recall that if $\Lambda_i$ is an $k$-homogeneous $n_i$-representation finite algebra for any $i\in\{1,\dots,t\}$, then $\Lambda_1\otimes_K\cdots\otimes_K\Lambda_t\,$ is an $k$-homogeneous $(n_1+\cdots+n_t)$-representation finite algebra (see \cite[Corollary 1.5]{HI1}).
\begin{example}\label{exam2}
Consider the following quivers.
 $$
\begin{array}{cc}
	\scalebox{.7}{\begin{tikzpicture}[color=gree]
			\node (X1) at (-1.5,-2) {$\bullet$};
			\node (X2) at (0,0) {$\bullet$};
			\node (X3) at (1.5,2) {$\bullet$};
			\node (X4) at (3,4) {$\bullet$};
			\node (X5) at (1.5,-2) {$\bullet$};
			\node (X6) at (3,0) {$\bullet$};
			\node (X7) at (4.5,2) {$\bullet$};
			\node (X8) at (4.5,-2) {$\bullet$};
			\node (X9) at (6,0) {$\bullet$};
			\node (X10) at (7.5,-2) {$\bullet$};
			\node (X0) at (-2.5,-2) {$Q:\,\,$};
			\draw [->,thick] (X1) -- (X2);
			\draw [->,thick] (X3) -- (X4);
			\draw [->,thick] (X5) -- (X6);
			\draw [->,thick] (X6) -- (X7);
			\draw [->,thick] (X3) -- (X6);
			\draw [->,thick] (X6) -- (X8);
			\draw [->,thick] (X4) -- (X7);
			\draw [->,thick] (X9) -- (X10);
			\draw [->,thick] (X5) -- (X1);
			\draw [->,thick] (X10) -- (X8);
			\draw [->,thick] (X9) -- (X6);
			\draw [->,thick] (X6) -- (X2);
			\draw [-,thick,dotted] (X2) -- (X3);
			\draw [-,thick,dotted] (X3) -- (X7);
			\draw [-,thick,dotted] (X5) -- (X2);
			\draw [-,thick,dotted] (X8) -- (X5);
			\draw [-,thick,dotted] (X9) -- (X8);
			\draw [-,thick,dotted] (X9) -- (X7);
	\end{tikzpicture}} & 
	\scalebox{.7}{\begin{tikzpicture}[color=Gr]
			\node (X0) at (-1,0) {$\,\,Q^\prime:\,\,$};
			\node (X1) at (0,0) {$\bullet$};
			\node (X2) at (3,0) {$\bullet$};
			\node (X3) at (6,0) {$\bullet$};
			\draw [->,thick] (X1) -- (X2);
			\draw [->,thick] (X3) -- (X2);
	\end{tikzpicture}}
\end{array}
$$ 
Set $\Lambda_2=KQ^\prime$ and $\Lambda_1=KQ/\mathcal{I}$ such that  $\mathcal{I}$ is the ideal generated by the dotted relation ($\Lambda_1$ is the algebra in the example \ref{exam}).
 $\Lambda_1$ is $2$-homogeneous $2$-representation finite and $\Lambda_2$ is $2$-homogeneous $1$-representation finite (see \cite{HI1}). Therefore, $\Lambda\coloneqq\Lambda_1\otimes_K \Lambda_2$ is $2$-homogeneous $3$-representation finite and clearly $\mcc^0=\add(\Lambda\oplus D\Lambda)$ is the unique $3$-cluster tilting subcategory of $\modd\Lambda$.  
 By \cite[Lemma 1.3]{Le}, $\Lambda\cong K(Q\otimes Q^\prime)/ \mathcal{I}$
such that $Q\otimes Q^\prime$ is the quiver
\begin{center}
\scalebox{.55}{
	\begin{tikzpicture}
		\node (X1) at (0,-.4) {$1$};
		\node (X2) at (1.4,1.4) {$2$};
		\node (X3) at (2.9,3) {$3$};
		\node (X4) at (4.4,4.8){$4$};
		\node (X5) at (2.4,-.6) {$5$};
		\node (X6) at (4,1.2) {$6$};
		\node (X7) at (5.5,2.8){$7$};
		\node (X8) at (5,-.8){$8$};
		\node (X9) at (6.6,1) {$9$};
		\node (X10) at (7.6,-1){$10$};
		\node (X11) at (11.2,.4){$11$};
		\node (X12) at (12.6,2.2) {$12$};
		\node (X13) at (14.1,3.8) {$13$};
		\node (X14) at (15.6,5.6){$14$};
		\node (X15) at (13.6,.2) {$15$};
		\node (X16) at (15.2,2) {$16$};
		\node (X17) at (16.7,3.6){$17$};
		\node (X18) at (16.2,0){$18$};
		\node (X19) at (17.8,1.8){$19$};
		\node (X20) at (18.8,-.2){$20$};
		\node (X21) at (22.4,1.2){$21$};
		\node (X22) at (23.8,3){$22$};
		\node (X23) at (25.3,4.6) {$23$};
		\node (X24) at (26.8,6.4){$24$};
		\node (X25) at (24.8,1){$25$};
		\node (X26) at (26.4,2.8){$26$};
		\node (X27) at (27.9,4.4){$27$};
		\node (X28) at (27.4,0.8){$28$};
		\node (X29) at (29,2.6){$29$};
		\node (X30) at (30,.6){$30$};
		\draw [->,thick,color=Gr] (X1) -- (X11);
		\draw [->,thick,color=Gr] (X21) -- (X11);
		\draw [->,thick,color=Gr] (X2) -- (X12);
		\draw [->,thick,color=Gr] (X22) -- (X12);
		\draw [->,thick,color=Gr] (X3) -- (X13);
		\draw [->,thick,color=Gr] (X23) -- (X13);
		\draw [->,thick,color=Gr] (X4) -- (X14);
		\draw [->,thick,color=Gr] (X24) -- (X14);
		\draw [->,thick,color=Gr] (X5) -- (X15);
		\draw [->,thick,color=Gr] (X25) -- (X15);
		\draw [->,thick,color=Gr] (X6) -- (X16);
		\draw [->,thick,color=Gr] (X26) -- (X16);
		\draw [->,thick,color=Gr] (X7) -- (X17);
		\draw [->,thick,color=Gr] (X27) -- (X17);
		\draw [->,thick,color=Gr] (X8) -- (X18);
		\draw [->,thick,color=Gr] (X28) -- (X18);
		\draw [->,thick,color=Gr] (X9) -- (X19);
		\draw [->,thick,color=Gr] (X29) -- (X19);
		\draw [->,thick,color=Gr] (X10) -- (X20);
		\draw [->,thick,color=Gr] (X30) -- (X20);
		\draw [->,thick,color=gree] (X21) -- (X22);
		\draw [->,thick,color=gree] (X23) -- (X24);
		\draw [->,thick,color=gree] (X25) -- (X26);
		\draw [->,thick,color=gree] (X26) -- (X27);
		\draw [->,thick,color=gree] (X23) -- (X26);
		\draw [->,thick,color=gree] (X26) -- (X28);
		\draw [->,thick,color=gree] (X24) -- (X27);
		\draw [->,thick,color=gree] (X29) -- (X30);
		\draw [->,thick,color=gree] (X25) -- (X21);
		\draw [->,thick,color=gree] (X30) -- (X28);
		\draw [->,thick,color=gree] (X29) -- (X26);
		\draw [->,thick,color=gree] (X26) -- (X22);
		\draw [-,thick,dotted,color=gree] (X22) -- (X23);
		\draw [-,thick,dotted,color=gree] (X23) -- (X27);
		\draw [-,thick,dotted,color=gree] (X25) -- (X22);
		\draw [-,thick,dotted,color=gree] (X28) -- (X25);
		\draw [-,thick,dotted,color=gree] (X29) -- (X28);
		\draw [-,thick,dotted,color=gree] (X29) -- (X27);
		\draw [->,thick,color=gree] (X11) -- (X12);
		\draw [->,thick,color=gree] (X13) -- (X14);
		\draw [->,thick,color=gree] (X15) -- (X16);
		\draw [->,thick,color=gree] (X16) -- (X17);
		\draw [->,thick,color=gree] (X13) -- (X16);
		\draw [->,thick,color=gree] (X16) -- (X18);
		\draw [->,thick,color=gree] (X14) -- (X17);
		\draw [->,thick,color=gree] (X19) -- (X20);
		\draw [->,thick,color=gree] (X15) -- (X11);
		\draw [->,thick,color=gree] (X20) -- (X18);
		\draw [->,thick,color=gree] (X19) -- (X16);
		\draw [->,thick,color=gree] (X16) -- (X12);
		\draw [-,thick,dotted,color=gree] (X12) -- (X13);
		\draw [-,thick,dotted,color=gree] (X13) -- (X17);
		\draw [-,thick,dotted,color=gree] (X15) -- (X12);
		\draw [-,thick,dotted,color=gree] (X18) -- (X15);
		\draw [-,thick,dotted,color=gree] (X19) -- (X18);
		\draw [-,thick,dotted,color=gree] (X19) -- (X17);
		\draw [->,thick,color=gree] (X1) -- (X2);
		\draw [->,thick,color=gree] (X3) -- (X4);
		\draw [->,thick,color=gree] (X5) -- (X6);
		\draw [->,thick,color=gree] (X6) -- (X7);
		\draw [->,thick,color=gree] (X3) -- (X6);
		\draw [->,thick,color=gree] (X6) -- (X8);
		\draw [->,thick,color=gree] (X4) -- (X7);
		\draw [->,thick,color=gree] (X9) -- (X10);
		\draw [->,thick,color=gree] (X5) -- (X1);
		\draw [->,thick,color=gree] (X10) -- (X8);
		\draw [->,thick,color=gree] (X9) -- (X6);
		\draw [->,thick,color=gree] (X6) -- (X2);
		\draw [-,thick,dotted,color=gree] (X2) -- (X3);
		\draw [-,thick,dotted,color=gree] (X3) -- (X7);
		\draw [-,thick,dotted,color=gree] (X5) -- (X2);
		\draw [-,thick,dotted,color=gree] (X8) -- (X5);
		\draw [-,thick,dotted,color=gree] (X9) -- (X8);
		\draw [-,thick,dotted,color=gree] (X9) -- (X7);
\end{tikzpicture}}
\end{center}
and $\mathcal{I}$ is the ideal generated by the following relations.
\begin{itemize}
\item
The dotted relations. In fact, these relations come from the quiver $Q$.
\item

The new relation $\alpha\beta-\gamma\delta$, for any two paths such $\alpha\beta$ and $\gamma\delta$ (note that $\alpha$ and $\delta$ are arrows of $Q$ and $\beta$ and $\gamma$ are arrows of $Q^\prime$).
\begin{center}
\scalebox{.7}{
\begin{tikzpicture}
\node (X4) at (3,4) {$\bullet$};
\node (X7) at (4.5,2) {$\bullet$};
\node (X14) at (15.2,4){$\bullet$};
\node (X17) at (16.7,2) {$\bullet$};
\draw [->,thick,color=gree] (X4) -- (X7)node [midway,left] {$\alpha$};
\draw [->,thick,color=gree] (X14) -- (X17)node [midway,right] {$\delta$};
\draw [->,thick,color=Gr] (X4) -- (X14)node [midway,above] {$\gamma$};
\draw [->,thick,color=Gr] (X7) -- (X17)node [midway,below] {$\beta$};
\draw [-,thick,dotted] (X4) -- (X17);
\end{tikzpicture}}
\end{center}
\end{itemize}

In Example \ref{exam}, we compute the Cartan matrix of $\Lambda_1=KQ/\mathcal{I}$. It is easy to see that the Cartan matrix $C_\Lambda$ is equal to the blocked matrix
\begin{align*}
 C_\Lambda=\begin{bmatrix}                                                                                     
C_{\Lambda_1} & 0 &0\\
C_{\Lambda_1}&C_{\Lambda_1} &C_{\Lambda_1}\\
0& 0 &C_{\Lambda_1}
\end{bmatrix}.
\end{align*}
Consider $X\in\modd\Lambda$ with the dimension vector $\underline{\dim}X=(1 2 0 1 0 3 2 2 01 0 0 0 \cdots 0)$. By Proposition \ref{th2}$(\rm{b})$, we have
$$\chi([X])=q(\underline{\dim}X)=(\underline{\dim}X)^t(C_\Lambda^{-1})^t \underline{\dim}X=0.$$
Assume that
\begin{align*}
0\to P_n^X\to \cdots\to P_1^X \to P_0^X\to X\to0
\end{align*}
is a projective resolution of $X$. We have  $[X]=\sum^{n}_{i=0}(-1)^i[P^X_i]$. Set $x=\sum^{n}_{i=0}(-1)^i[P^X_i]_{\mcc^0}$. We have  
$$\chi(x)=\chi(\sum^{n}_{i=0}(-1)^i[P^X_i]_{\mcc^0})=\chi(\sum^{n}_{i=0}(-1)^i[P^X_i])=\chi([X])=0,$$ 
and so there is an $x\in\mathrm{K}_0(\mcc^0)$ such that $\chi(x)=0$. Therefore, $\chi$ is not positive definite.
\end{example}

According to Examples \ref{exam} and \ref{exam2}, the converse of Corollary \ref{cor1} is not satisfied.

As we mentioned before, for a hereditary algebra $A$, the homological quadratic form $\chi$ is positive definite if and only if $A$ is representation finite (see \cite[Theorem $\mathrm{VIII}$.3.6]{ARS}). To prove the inverse direction in \cite[Theorem $\mathrm{VIII}$.3.6]{ARS}, the authors used the fact that for a hereditary algebra $A$, the homological quadratic form $\chi$ is positive definite if and only if $\chi([X]) > 0$ for all non-zero $X\in\modd A$ (see \cite[Lemma $\mathrm{VIII}$.3.3]{ARS}).
One can see that if the restriction of $\chi$ to the set $\{[X]\,|\, X\in\ind A\}$ is positive definite then $A$ is still representation finite. Although this fact is known to the expert, we give a proof for it in the following remark.
\begin{remark}\label{h}
		Let $A$ be a hereditary algebra.
		\begin{enumerate}
		\item 
		The restriction of $\chi$ to $\{[X]\,|\, X\in\ind A\}$ is positive definite if and only if $A$ is representation finite. For the proof, assume that the restriction $\chi$ to $\{[X]\,|\, X\in\ind A\}$ is positive definite. Suppose to the contrary
		that $A$ is not representation finite. By \cite[Proposition $\mathrm{VIII}$.3.6)]{ARS}, there exists an indecomposable $A$-module $X$ with $\Ext^1(X,X) \neq0$. If $X$ is a brick, i.e., $\End_A(X)\cong K$, then we have $\chi([X])=\dim_K\End_A(X)-\dim_K\Ext_A^1(X,X)\leq0$. But this contradicts the assumption. If $X$ is not a brick, By \cite[Theorem]{R}, there exists a brick $X^\prime$ such that $\Ext^1(X^\prime,X^\prime) \neq0$. Then similarly we have $\chi([X^\prime])\leq0$. But this also contradicts the assumption. Therefore, $A$ is representation finite. The proof of the other side is clear.
		\item 
			By the preceding discussion, the following are equivalent.
		\begin{itemize}
			\item[(a)]
			$A$ is representation finite.
			\item[(b)]
			The homological quadratic form $\chi$ is positive definite on $\rm{K}_0(\modd A)$.
			\item[(c)]
			The restriction of $\chi$ to $\{[X]\,|\,0\neq X\in\modd A\}$ is positive definite.
			\item[(d)]
			The restriction of $\chi$ to $\{[X]\,|\, X\in\ind A\}$ is positive definite.
		\end{itemize}
		\end{enumerate}
\end{remark}
In Theorem \ref{form}, we gave the higher dimensional version of $(\rm{b})\Rightarrow (\rm{a})$ for $n$-hereditary algebras with odd $n$.
On the other hand, we gave the counterexamples for the higher dimensional analogue of $(\rm{a})\Rightarrow (\rm{b})$ for any $n$-hereditary algebra (see examples \ref{exam} and \ref{exam2}). So unlike the classical case, the higher dimensional analogue of the implication $(\rm{a})\Leftrightarrow(\rm{b})$ is not satisfied for $n$-hereditary algebras. Therefore, it is natural to ask what is the necessary and sufficient condition for the $n$-representation finiteness of $n$-hereditary algebras. To finding an answer for this question, we investigate the higher dimensional analogue of the other parts of Remark \ref{h}$(2)$.
\begin{proposition}
	Let $\Lambda$ be an $n$-hereditary algebra.
	\begin{itemize}
		\item[(a)]
		The restriction of $\chi$ to $\{[X]\,|\, X\in\ind(\mathscr{P}\vee\mathscr{I})\}$ is positive definite.
		\item[(b)]
		If $n$ is even, then the restriction of $\chi$ to $\{[X]\,|\,0\neq X\in\mathscr{P}\vee\mathscr{I}\}$ is positive definite. 	
	\end{itemize}
\end{proposition}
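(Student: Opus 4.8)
The plan is to evaluate $\chi$ directly on objects of $\mathscr{P}\vee\mathscr{I}$ using the simplified shape of the bilinear form $B$ recorded in Remark \ref{fo}. First I would observe that $\mathscr{P}\vee\mathscr{I}\subseteq\mcc^0$: in the $n$-representation finite case $\mcc^0=\mathscr{P}=\mathscr{I}$ by Theorem \ref{th1}, and in the $n$-representation infinite case $\mcc^0=\mathscr{P}\vee\mathscr{R}\vee\mathscr{I}$ by Proposition \ref{Thio}(f). Hence for any $X,Y\in\mathscr{P}\vee\mathscr{I}$ the hypotheses of Remark \ref{fo} are met, giving
$$B([X],[Y])=\dim_K\Hom_\Lambda(X,Y)+(-1)^n\dim_K\Ext^n_\Lambda(X,Y).$$
This single identity is what everything rests on.

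For part (a), I would take $X\in\ind(\mathscr{P}\vee\mathscr{I})$ and specialize the identity to $Y=X$. By Theorem \ref{n-ext}(a)(iv), or (b)(iv) when $X\in\ind\mathscr{I}$, we have $\Ext^n_\Lambda(X,X)=0$, so the second term vanishes and
$$\chi([X])=B([X],[X])=\dim_K\End_\Lambda(X)>0,$$
the strict inequality holding because $X\neq0$. Since $X\neq0$ forces $[X]=\underline{\dim}X\neq0$, this is precisely positive definiteness on the indicated set, and I would emphasise that it requires no parity hypothesis on $n$.

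For part (b), where now $n$ is even and $X\in\mathscr{P}\vee\mathscr{I}$ is only assumed nonzero, I would decompose $X=\bigoplus_{i=1}^{s}X_i$ into indecomposable summands (each in $\ind(\mathscr{P}\vee\mathscr{I})$, with $s\geq1$) and expand $\chi$ bilinearly:
$$\chi([X])=\sum_{i=1}^{s}\sum_{j=1}^{s}B([X_i],[X_j]).$$
The decisive point is that, because $n$ is even, $(-1)^n=1$, so every summand $B([X_i],[X_j])=\dim_K\Hom_\Lambda(X_i,X_j)+\dim_K\Ext^n_\Lambda(X_i,X_j)$ is a sum of two non-negative integers and hence $\geq0$. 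Discarding the off-diagonal terms and applying part (a) to the diagonal ones yields $\chi([X])\geq\sum_{i=1}^{s}\dim_K\End_\Lambda(X_i)>0$.

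The computation is short; the only genuine subtlety is the parity of $n$. For indecomposable arguments the vanishing $\Ext^n_\Lambda(X,X)=0$ from Theorem \ref{n-ext} removes the sign-carrying term entirely, which is why part (a) holds for all $n$. For a general nonzero $X$ the cross terms $B([X_i],[X_j])$ reappear, and their non-negativity is exactly what requires $n$ even — for odd $n$ the $\Ext^n$ contribution would be subtracted and one would lose control of the off-diagonal terms. This parity constraint is the main, and indeed the only, obstacle in the argument.
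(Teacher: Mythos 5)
Your proposal is correct and follows essentially the same route as the paper: both parts rest on the identity of Remark \ref{fo} together with the vanishing $\Ext^n_\Lambda(X,X)=0$ from Theorem \ref{n-ext} for part (a), and on the sign $(-1)^n=1$ for part (b). The only cosmetic difference is that in (b) the paper applies Remark \ref{fo} directly to the (possibly decomposable) module $X$ and concludes $\chi([X])=\dim_K\Hom_\Lambda(X,X)+\dim_K\Ext^n_\Lambda(X,X)>0$ in one line, so your decomposition into indecomposable summands, while valid, is not needed.
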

\begin{proof}
	$(\mathrm{a})$ Assume that $X\in\ind(\mathscr{P}\vee\mathscr{I})$. By Theorem \ref{n-ext}, $\Ext_\Lambda^n(X,X)=0$ and so by Remark \ref{fo} we have
	\begin{align*}
		\chi([X])&=\dim_K\Hom_\Lambda(X,X)>0.
	\end{align*}
	$(\mathrm{b})$ By Remark \ref{fo}, we have
	\begin{align*}
		\chi([X])=\dim_K\Hom_\Lambda(X,X)+\Ext_\Lambda^n(X,X)>0.
	\end{align*}	
\end{proof}
Let $\Lambda$ be an $n$-representation finite algebra. Theorem \ref{th1} implies that $\mcc ^0=\mathscr{P}=\mathscr{I}$. Therefore, the following result immediately follows from the above proposition.
\begin{corollary}
Let $\Lambda$ be an $n$-representation finite algebra.
\begin{itemize}
	\item[(a)]
The restriction of $\chi$ to $\{[X]\,|\, X\in\ind\mcc^0\}$ is positive definite.
	\item[(b)]
	If $n$ is even, then the restriction of $\chi$ to $\{[X]\,|\,0\neq X\in\mcc^0\}$ is positive definite.
	\end{itemize}
\end{corollary}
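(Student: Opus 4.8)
The plan is to deduce the corollary directly from the preceding proposition, the entire content being a subcategory identification. Since $\Lambda$ is $n$-representation finite, Theorem \ref{th1} gives $\mcc^0=\mathscr{P}=\mathscr{I}$ as full subcategories of $\modd\Lambda$. First I would record the immediate consequence for the join: because $\mathscr{P}$ and $\mathscr{I}$ are each already equal to $\mcc^0$, the subcategory $\mathscr{P}\vee\mathscr{I}=\add(\ind\mathscr{P}\cup\ind\mathscr{I})$ coincides with $\mcc^0$, so their join contributes nothing new.

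Next I would pass to the level of Grothendieck-group classes. Since $\mcc^0=\mathscr{P}\vee\mathscr{I}$, we have $\ind\mcc^0=\ind(\mathscr{P}\vee\mathscr{I})$, and hence the two families of classes appearing in the corollary and in the proposition are literally the same sets:
$$\{[X]\mid X\in\ind\mcc^0\}=\{[X]\mid X\in\ind(\mathscr{P}\vee\mathscr{I})\},$$
$$\{[X]\mid 0\neq X\in\mcc^0\}=\{[X]\mid 0\neq X\in\mathscr{P}\vee\mathscr{I}\}.$$
With these equalities in hand, part (a) follows at once from part (a) of the preceding proposition, which already establishes that $\chi$ is positive definite on $\{[X]\mid X\in\ind(\mathscr{P}\vee\mathscr{I})\}$. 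Likewise, for even $n$, part (b) follows from part (b) of that proposition.

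I do not expect any genuine obstacle: the whole corollary is a verbatim restatement of the proposition with $\mathscr{P}\vee\mathscr{I}$ replaced by $\mcc^0$, and the only substantive input is the identification $\mcc^0=\mathscr{P}=\mathscr{I}$ from Theorem \ref{th1}. The single point worth stating explicitly is that positive-definiteness of $\chi$ on a prescribed family of classes depends only on that family as a subset of $\mathrm{K}_0(\modd\Lambda)$; since the two families are equal, the property transfers without any further argument.
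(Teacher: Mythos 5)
Your proposal is correct and matches the paper's own argument exactly: the paper also deduces the corollary immediately from the preceding proposition via the identification $\mcc^0=\mathscr{P}=\mathscr{I}$ given by Theorem \ref{th1}. Your explicit remark that positive-definiteness on a family of classes depends only on that family as a subset of $\mathrm{K}_0(\modd\Lambda)$ is a harmless (and slightly more careful) elaboration of the same step.
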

Motivated by the above results, we pose the following questions.
\begin{question}
	Let $\Lambda$ be an $n$-hereditary algebra.
	\begin{enumerate}
		\item 
		Assume that the restriction of $\chi$ to $\{[X]\,|\, X\in\ind\mcc^0\}$ is positive definite. Can we conclude that $\Lambda$ is $n$-representation finite?
		\item
		Assume that the restriction of $\chi$ to $\{[X]\,|\,0\neq X\in\mcc^0\}$ is positive definite, for even $n$. Can we conclude that $\Lambda$ is $n$-representation finite?
	\end{enumerate}	
\end{question}
In Theorem \ref{form}, we showed that if $\Lambda$ is $n$-representation infinite with odd $n$, then the restriction of the quadratic form $\chi$ to $\mathrm{K}_0(\mcc^0)$ is not positive definite. As a final result of this section, we show that for $n$-representation tame case, the restriction of $\chi$ to $\{[X]\,|\,0\neq X\in\mathscr{R}\}$ is not positive definite.
\begin{proposition}
If $\Lambda$ is an $n$-representation tame $K$-algebra with odd $n$ and $\rm{char}(K)\neq 2$, then the restriction of $\chi$ to $\{[X]\,|\,0\neq X\in\mathscr{R}\}$ is not positive definite.
\end{proposition}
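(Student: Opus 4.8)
The plan is to exploit the $\tau_n$-periodicity on $\mathscr{R}$ provided by Proposition~\ref{tame}, together with the observation that for odd $n$ every vector fixed by the Coxeter transformation is isotropic for the Euler form. Since $\Lambda$ is $n$-representation tame, Proposition~\ref{tame}$(\mathrm{a})$ gives $\mathscr{R}\neq 0$; as $\modd\Lambda$ is Krull--Schmidt and $\mathscr{R}$ is closed under direct summands, I may fix an indecomposable object $X\in\mathscr{R}$. By Proposition~\ref{tame}$(\mathrm{b})$ there is a positive integer $t$ with $\tau_n^t X\cong X$, and by Proposition~\ref{tunu} each $\tau_n^i X$ again lies in $\mathscr{R}$. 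Writing $d\coloneqq\underline{\dim}X$, Proposition~\ref{fim}$(\mathrm{a})$ yields $\Phi^i(d)=\underline{\dim}(\tau_n^i X)$ for every $i\geq 0$; in particular $\Phi^t(d)=\underline{\dim}(\tau_n^t X)=d$, so $d$ is fixed by $\Phi^t$.

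Next I would sum over the $\tau_n$-orbit and set
\[
S\coloneqq\sum_{i=0}^{t-1}\Phi^i(d)=\underline{\dim}\Big(\bigoplus_{i=0}^{t-1}\tau_n^i X\Big).
\]
A shift of summation indices, using $\Phi^t(d)=d$, gives $\Phi(S)=S$, so $S$ is fixed by the Coxeter transformation. The crucial point is that $S$ is realised by the module $Y\coloneqq\bigoplus_{i=0}^{t-1}\tau_n^i X$, which is a \emph{nonzero} object of $\mathscr{R}$: every summand lies in $\mathscr{R}$, and since dimension vectors have nonnegative entries and $d\neq 0$, the sum $S$ is nonzero; hence $Y\neq 0$ and $[Y]=S\neq 0$ in $\mathrm{K}_0(\modd\Lambda)\cong\mathbb{Z}^m$.

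Finally, applying Lemma~\ref{x-fi}$(\mathrm{a})$ with $x=y=S$ and using both $\Phi(S)=S$ and the parity of $n$ gives
\[
q(S)=\langle S,S\rangle=(-1)^n\langle S,\Phi(S)\rangle=-\langle S,S\rangle=-q(S),
\]
so that $2q(S)=0$ in $\mathbb{Z}$ and therefore $q(S)=0$. By Proposition~\ref{th2}$(\mathrm{b})$ this reads $\chi([Y])=q(\underline{\dim}Y)=q(S)=0$. Thus $[Y]$ is an element of $\{[X]\mid 0\neq X\in\mathscr{R}\}$ with $\chi([Y])=0\not>0$, which is exactly the statement that the restriction of $\chi$ to this set is not positive definite.

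The only delicate step is confirming that the orbit sum $S$ is the dimension vector of a genuinely nonzero regular module, which I settle through the nonnegativity of dimension vectors as above; the remaining manipulations are formal. I expect the parity hypothesis on $n$ to be the essential ingredient, as it is precisely what forces a $\Phi$-fixed vector to be isotropic. Note that the decisive identity $2q(S)=0$ takes place in $\mathbb{Z}$, so in this particular route the hypothesis $\mathrm{char}(K)\neq 2$ does not enter; I would anticipate that assumption becoming relevant only in an alternative argument that locates a single regular brick with a nonvanishing self-extension $\Ext_\Lambda^n(X,X)$, in the spirit of Remark~\ref{h}.
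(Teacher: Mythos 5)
Your proof is correct and follows essentially the same route as the paper: both form the $\tau_n$-orbit sum $Y=\bigoplus_{i=0}^{t-1}\tau_n^{i}X$ of a nonzero regular module, observe that $\Phi$ fixes $\underline{\dim}\,Y$, and invoke Lemma \ref{x-fi}(a) with $n$ odd to conclude $q(\underline{\dim}\,Y)=-q(\underline{\dim}\,Y)=0$, hence $\chi([Y])=0$. Your closing observation is also accurate: the paper's own proof likewise never uses $\mathrm{char}(K)\neq 2$, since the cancellation $2q=0\Rightarrow q=0$ takes place in $\mathbb{Z}$.
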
		 
\begin{proof}
 By Proposition \ref{tame}, $\mathscr{R}\neq 0$ and for any $X\in\mathscr{R}\,$ there exists a positive integer $l$ such that $\tau_n^l(X)\cong X$. Put $Y=X\oplus\tau_n(X)\oplus\tau_n^2(X)\oplus\cdots\oplus\tau_n^{l-1}(X)$. It is obvious that $\tau_n(Y)\cong Y$ and by Proposition \ref{tunu}, $\nu_n(Y)\cong Y$. We know that $\Phi$ gives the action of $\nu_n$ on the Grothendieck group ${\mathrm{K}}_0(\md_\Lambda)$. Therefore, $\underline{\dim}(\nu_nY) =\Phi(\underline{\dim} Y)$ and so $\Phi(\underline{\dim} Y)=\underline{\dim} Y$. By Lemma \ref{x-fi}$(\rm{a})$, we have
$$\langle \underline{\dim} Y,\underline{\dim} Y\rangle =- \langle  \underline{\dim}Y, \Phi(\underline{\dim} Y)\rangle =-\langle \underline{\dim} Y,\underline{\dim} Y\rangle.$$ Consequently, by Proposition \ref{th2}$(\rm{b})$, $\chi([Y])=q(\underline{\dim} Y)=0$.	
\end{proof}
\section*{Acknowledgments}
The research of the first author was in part supported by a grant from IPM (No.1403160036). The work of the second author is based upon research funded by Iran National Science Foundation (INSF) under project No. 4032107. Also, the research of the second author was in part supported by a grant from IPM (No. 1404160415).


\begin{thebibliography}{10}
\bibitem{ASS} \textsc{I. Assem, D. Simson and A. Skowro\'{n}ski}, \emph{Elements of the Representation Theory of Associative Algebras, Vol. 1: Techniques of Representation Theory}, London Math. Soc. Student Texts \textbf{65}, Cambridge University Press, Cambridge, 2006.
\bibitem{AP} \textsc{M. Auslander and M. I. Platzeck}, Representation theory of hereditary Artin algebras, in: \emph{Lecture Notes in Pure and Appl. Math., Vol. 37}, Dekker, New York, 1978, pp. 389--424.
\bibitem{AR1} \textsc{M. Auslander and I. Reiten}, Modules determined by their composition factor, \emph{Ill. J. Math.}, \textbf{29} (1985), 280--301.
\bibitem{ARS} \textsc{M. Auslander, I. Reiten and S. O. Smal{\o}}, \emph{Representation Theory of Artin Algebras}, Cambridge Studies in Advanced Mathematics \textbf{36}, Cambridge University Press, Cambridge, 1997.
\bibitem{AS1} \textsc{M. Auslander and S.O. Smal{\o}}, Preprojective modules over Artin algebras, \emph{J. Algebra}, \textbf{66}(1) (1980), 61--122.
\bibitem{Bas} \textsc{H. Bass}, \emph{Algebraic K-Theory}, W. A. Benjamin Inc., New York, 1968.
\bibitem{BGP} \textsc{I. N. Bernstein, I. M. Gelfand and V. A. Ponomarev}, Coxeter functors
and Gabriel’s theorem, \emph{Uspiehi Mat. Nauk}, \textbf{28} (1973), 19--33 (in
Russian), English translation in Russian Math. Surveys, \textbf{28} (1973),
17--32.
\bibitem{DIJ} \textsc{L. Demonet, O. Iyama and G. Jasso}, $\tau$-tilting finite algebras, bricks, and $g$-vectors, \emph{Int. Math. Res. Not. IMRN}, \textbf{3} (2019), 852--892.
\bibitem{DN1} \textsc{R. Diyanatnezhad and A. Nasr-Isfahani}, Relations for Grothendieck groups of n-cluster tilting subcategories, \emph{J. Algebra}, \textbf{594} (2022), 54--73.
\bibitem{DJY} \textsc{T. Dyckerhoff, G. Jasso and L. Yanki}, The symplectic geometry of higher Auslander
algebras: Symmetric products of disks, \emph{Forum of Mathematics, Sigma}, \textbf{9} (2021), e10.
\bibitem{G} \textsc{P. Gabriel}, Unzerlegbare Darstellungen I,  \emph{Manuscripta Math.}, \textbf{6} (1972), 71--103.
\bibitem{GLS} \textsc{C. Geiss, B. Leclerc and J. Schröer}, Rigid modules over preprojective algebras, \emph{Invent. Math.}, \textbf{165}(3) (2006), 589--632.
\bibitem{Ha} \textsc{D. Happel}, \emph{Triangulated Categories in the Representation Theory of Finite Dimensional Algebras}, London Math. Soc., \emph{Lecture Notes Series} \textbf{119}, Cambridge University Press, Cambridge, 1988.
\bibitem{HI1} \textsc{M. Herschend and O. Iyama}, $n$-representation-finite algebras and twisted fractionally Calabi-Yau algebras, \emph{Bull. Lond. Math. Soc.}, \textbf{43}(3) (2011), 449--466.
\bibitem{HI2} \textsc{M. Herschend and O. Iyama}, Selfinjective quivers with potential and $2$-representation-finite algebras, \emph{Compos. Math.}, \textbf{147}(6) (2011), 1885--1920.
\bibitem{HIMO} \textsc{M. Herschend, O. Iyama, H. Minamoto and S. Oppermann}, Representation theory
of Geigle-Lenzing complete intersections, \emph{Mem. Amer. Math. Soc.}, \textbf{285} (2023), Paper No. 1412.
\bibitem{HIO} \textsc{M. Herschend, O. Iyama and S. Oppermann}, $n$-representation infinite algebras, \emph{Adv. Math.}, \textbf{252}(3) (2014), 292--342.
\bibitem{HJ} \textsc{M. Herschend and P. Jørgensen}, Classification of higher wide subcategories for higher Auslander algebras
of type $A$, \emph{J. Pure Appl. Algebra}, \textbf{225}(5) (2021), 106583.
\bibitem{HZ1} \textsc{Z. Huang and X. Zhang}, Higher Auslander algebras admitting trivial maximal orthogonal subcategories, \emph{J. Algebra}, \textbf{330}(1) (2011), 375--387.
\bibitem{HZ3} \textsc{Z. Huang and X. Zhang}, The existence of maximal $n$-orthogonal subcategories, \emph{J. Algebra}, \textbf{321}(10) (2009), 2829--2842.
\bibitem{HZ2} \textsc{Z. Huang and X. Zhang}, Trivial maximal $1$-orthogonal subcategories for Auslander’s $1$-Gorenstein algebras, \emph{J. Aust. Math. Soc.}, \textbf{94}(1) (2013), 133--144.
\bibitem{I2} \textsc{O. Iyama}, Auslander correspondence, \emph{Adv. Math.}, \textbf{210}(1) (2007) 51--82.
\bibitem{I3} \textsc{O. Iyama}, Auslander-Reiten theory revisited, in: \emph{Trends in Representation Theory of Algebras and Related Topics}, 2008, pp. 349--398.
\bibitem{I4} \textsc{O. Iyama}, Cluster tilting for higher Auslander algebras, \emph{Adv. Math.}, \textbf{226}(1) (2011), 1--61.
\bibitem{I1} \textsc{O. Iyama}, Higher-dimensional Auslander--Reiten theory on maximal orthogonal subcategories, \emph{Adv. Math.}, \textbf{210}(1) (2007), 22--50.
\bibitem{IO} \textsc{O. Iyama and S. Oppermann}, $n$-representation-finite algebras and $n$-APR tilting, \emph{Trans. Amer. Math. Soc.}, \textbf{363}(12) (2011), 6575--6614.
\bibitem{IO1} \textsc{O. Iyama and S. Oppermann}, Stable categories of higher preprojective algebras, \emph{Adv. Math.}, \textbf{244}(10) (2013), 23--68.
\bibitem{IW1} \textsc{O. Iyama and M. Wemyss}, A new triangulated category for rational surface singularities, \emph{Ill. J. Math.}, \textbf{55}(1) (2011), 325--341.
\bibitem{IW2} \textsc{O. Iyama and M. Wemyss}, Maximal modifications and Auslander–Reiten duality for non-isolated singularities, \emph{Invent. Math.}, \textbf{197}(3) (2014), 521--586.
\bibitem{IW} \textsc{O. Iyama and M. Wemyss}, On the noncommutative Bondal–Orlov conjecture, \emph{Journal für die reine und angewandte Mathematik}, \textbf{683} (2013), 119--128.
\bibitem{J} \textsc{G. Jasso}, $n$-Abelian and $n$-exact categories, \emph{Math. Z.}, \textbf{283} (2016), 703--759.
\bibitem{JKM} \textsc{G. Jasso, B. Keller and F. Muro}, The Derived Auslander–Iyama correspondence, \href{https://arxiv.org/abs/2208.14413}{arXiv:2208.14413}, 2023.
\bibitem{Jo} \textsc{P. Jørgensen}, A torsion classes and t-structures in higher homological algebra, \emph{Int. Math. Res. Not.
IMRN}, \textbf{13} (2016), 3880--3905.
\bibitem{K} \textsc{B. Keller}, Deformed Calabi–Yau completions, \emph{J. Reine Angew. Math.}, \textbf{654} (2011), 125--180.
\bibitem{Ke} \textsc{O. Kerner}, Stable components of wild tilted algebras, \emph{J. Algebra}, \textbf{142} (1991), 37--57.
\bibitem{KP} \textsc{J. Kuzmanovich and A. Pavlichenkov}, Finite groups of matrices whose entries are integers, \emph{Amer. Math. Monthly}, \textbf{109}(2) (2002), 173--186.
\bibitem{Le} \textsc{Z. Leszczy\'{n}ski}, On the representation type of tensor product algebras, \emph{Fund. Math.}, \textbf{144}(2) (1994), 143--161.
\bibitem{M} \textsc{Y. Mizuno}, A Gabriel-type theorem for cluster tilting, \emph{Proc. London. Math. Soc.}, \textbf{108}(4) (2014), 836--868.
\bibitem{Mi} \textsc{Y. Mizuno}, Classifying $\tau$-tilting modules over preprojective algebras of Dynkin type, \emph{Math. Z.}, \textbf{277}(3) (2014), 665--690.
\bibitem{Ma} \textsc{K. Mousavand}, $\tau$-tilting finiteness of biserial algebras, \emph{Algebr. Represent. Theory}, \textbf{26}(6) (2023), 2485--2522.
\bibitem{OT} \textsc{S. Oppermann and H. Thomas}, Higher-dimensional cluster combinatorics and representation theory, \emph{J. Eur. Math. Soc.}, \textbf{14}(6) (2012), 1679--1737.
\bibitem{Re} \textsc{J. Reid}, Modules determined by their composition factors in higher homological algebra, \href{https://arxiv.org/abs/2007.06350}{arXiv:2007.06350}, 2020.
\bibitem{R} \textsc{C. M. Ringel}, Bricks in hereditary length categories, \emph{Resultate
der Mathematik}, \textbf{6}(1) (1983), 64--70.
\bibitem{Ri1} \textsc{C. M. Ringel}, Finite dimensional hereditary algebras of wild representation type, \emph{Math. Z}, \textbf{161} (1978), 235--255.
\bibitem{Ri} \textsc{C. M. Ringel}, Representations of $K$-species and bimodules, \emph{J. Algebra}, \textbf{41} (1976), 269--302.
\bibitem{Ri2} \textsc{C. M. Ringel}, The canonical algebras, \emph{in: Topics in algebra, Part 1 (Warsaw,
1988)}, 407--432, Banach Center Publications 26, Part 1, PWN, Warsaw, 1990. 
\bibitem{Ro} \textsc{J. J. Rotman}, \emph{An Introduction to Homological Algebra}, 2nd ed., Universitext, Springer, New York, 2009. 
\bibitem{SS} \textsc{D. Simson and A. Skowro\'{n}ski}, \emph{Elements of the Representation Theory of Associative Algebras, Vol. 2: Tubes and Concealed Algebras of
Euclidean type}, London Math. Soc. Student Texts \textbf{71}, Cambridge University Press, Cambridge, 2007.
\bibitem{SY} \textsc{A. Skowro\'nski and K. Yamagata}, \emph{Frobenius Algebras II, Tilted and Hochschild extension algebras}, EMS Textbooks Math., Eur. Math. Soc. Publ. House, Zürich, 2017.
\bibitem{W} \textsc{N. J. Williams}, New interpretations of the higher Stasheff-Tamari orders, \emph{Adv. Math.}, \textbf{407}(2022), Paper No. 108552.
\end{thebibliography}
\end{document}